\documentclass[preprint,3p,11pt]{elsarticle}
\makeatletter
\def\ps@pprintTitle{%
  \let\@oddhead\@empty
  \let\@evenhead\@empty
  \let\@oddfoot\@empty
  \let\@evenfoot\@empty
}
\makeatother
\usepackage{amsmath}
\usepackage{amsfonts}
\usepackage{amssymb}
\usepackage{amsthm}
\usepackage{bbm}
\usepackage{siunitx}
\usepackage{graphicx}
\usepackage[justification=centering]{caption}
\usepackage{subcaption}
\usepackage{xspace}
\usepackage{bm}
\usepackage{threeparttable}
\usepackage[ruled,lined,linesnumbered]{algorithm2e}
\usepackage[english]{babel}
\usepackage{amsmath}
\usepackage{textcomp, gensymb}
\usepackage{float}
\usepackage{booktabs}
\usepackage{placeins}
\usepackage{stmaryrd}
\usepackage{soul}
\usepackage{cancel}
\usepackage{xcolor}
\usepackage{listings}
\usepackage{hyperref}
\hypersetup{colorlinks=true, citecolor=blue, linkcolor=blue, urlcolor=blue}
\usepackage{tikz}
\usetikzlibrary{matrix,backgrounds,shapes}
\usetikzlibrary{positioning}
\usetikzlibrary{fit}
\usetikzlibrary{plotmarks}
\usetikzlibrary{decorations.pathreplacing}
\usepackage{pgfplots}
\pgfplotsset{compat=1.18}
\usetikzlibrary{shapes}
\usetikzlibrary{positioning,arrows}
\usetikzlibrary{fadings,shapes.arrows,shadows}
\usetikzlibrary{arrows.meta}
\usepackage{orcidlink}

\definecolor{codegray}{rgb}{0.95,0.95,0.95}
\lstdefinestyle{pythonstyle}{language=Python, backgroundcolor=\color{codegray}, basicstyle=\ttfamily\small, keywordstyle=\bfseries, commentstyle=\itshape, stringstyle=\ttfamily, numbers=left, numberstyle=\tiny, numbersep=8pt, frame=single, breaklines=true, showstringspaces=false, tabsize=4, captionpos=b}

\newtheorem{theorem}{Theorem}[section]

\newtheorem{remark}{Remark}[section]

\newcommand\Rey{\mbox{\textit{Re}}}

\newcommand\Mach{\mbox{\textit{Ma}}}

\newcommand{\Tr}{\ensuremath{^{\mr{\top}}}}
\newcommand{\mr}[1]{\ensuremath{\mathrm{#1}}}
\newcommand{\fnc}[1]{\ensuremath{\mathcal{#1}}}

\newcommand{\mat}[1]{\ensuremath{\mathsf{#1}}}
\newcommand{\xm}[0]{\ensuremath{x_{m}}}
\newcommand{\xil}[0]{\ensuremath{\xi_{l}}}
\newcommand{\nxil}[0]{\ensuremath{n_{\xil}}}
\newcommand{\Q}[0]{\ensuremath{\bm{\fnc{Q}}}}
\newcommand{\W}[0]{\ensuremath{\bm{\fnc{W}}}}
\newcommand{\Fxm}[0]{\ensuremath{\bm{\fnc{F}}_{\xm}^{(I)}}}
\newcommand{\Fxmv}[0]{\ensuremath{\bm{\fnc{F}}_{\xm}^{(V)}}}
\newcommand{\FxmALE}[0]{\ensuremath{\bm{\fnc{F}}_{\xm}^{(ALE)}}}
\newcommand{\GB}[0]{\ensuremath{\bm{\fnc{G}}^{(B)}}}
\newcommand{\J}[0]{\ensuremath{\fnc{J}}}
\newcommand{\Gzero}[0]{\ensuremath{\bm{\fnc{G}}^{(0)}}}
\newcommand{\Uone}[0]{\ensuremath{\fnc{U}_{1}}}
\newcommand{\Utwo}[0]{\ensuremath{\fnc{U}_{2}}}
\newcommand{\Uthree}[0]{\ensuremath{\fnc{U}_{3}}}
\newcommand{\Ui}[0]{\ensuremath{\fnc{U}_{i}}}
\newcommand{\U}[0]{\ensuremath{\bm{\fnc{U}}}}

\newcommand{\Vi}[0]{\ensuremath{\fnc{V}_{i}}}
\newcommand{\Vm}[0]{\ensuremath{\fnc{V}_{m}}}
\newcommand{\Vg}[0]{\ensuremath{\bm{\fnc{V}}}}
\newcommand{\Vgi}[1]{\ensuremath{\fnc{V}_{#1}}}
\newcommand{\E}[0]{\ensuremath{\fnc{E}}}

\newcommand{\Cij}[2]{\ensuremath{\mat{C}_{#1#2}}}
\newcommand{\Chatij}[2]{\ensuremath{\hat{\mat{C}}_{#1#2}}}

\newcommand{\Fxisca}[1]{\ensuremath{\fnc{F}_{\ensuremath{x_{#1}}}}}
\newcommand{\avg}[1]{\left[\!\left[#1\right]\!\right]}
\newcommand{\logavg}[1]{\left[\!\left[#1\right]\!\right]^{\log}}

\begin{document}
\begin{frontmatter}

\title{Entropy-stable moving-wall boundary conditions for the ALE formulation of the  compressible Navier--Stokes equations}

\author[KAUST-CEMSE,POLIMI]{Luca Galimberti\orcidlink{0009-0001-1145-0539}}
\author[KAUST-CEMSE]{Roberto Nuca\orcidlink{0000-0002-9031-5668}}
\author[KAUST-CEMSE]{Lisandro Dalcin\orcidlink{0000-0001-8086-0155}}
\author[POLIMI]{Alberto Guardone\orcidlink{0000-0001-6432-2461}}
\author[KAUST-CEMSE,KAUST-PSE]{Matteo Parsani\orcidlink{0000-0001-7300-1280}}

\address[KAUST-CEMSE]{King Abdullah University of Science and Technology (KAUST), Computer Electrical and Mathematical Science and Engineering Division (CEMSE), Thuwal, Saudi Arabia}
\address[POLIMI]{Politecnico di Milano, Dipartimento di Scienze e Tecnologie Aerospaziali (DAER), Milan, Italy}

\address[KAUST-PSE]{King Abdullah University of Science and Technology (KAUST), Physical Science and Engineering Division (PSE), Thuwal, Saudi Arabia}

\begin{abstract}
We present a high-order entropy-stable framework for the compressible Euler and Navier–Stokes equations on moving domains.
The space–time mapping describing the domain motion is recast in an arbitrary Lagrangian Eulerian (ALE) formulation, in which the physical inviscid fluxes and the contributions induced by mesh motion are treated in a unified manner.

At the continuous level, we prove that the proposed moving-wall boundary conditions are entropy conservative for the Euler equations and entropy stable for the Navier–Stokes equations.
The no-slip condition is formulated in terms of the velocity relative to the moving wall, yielding a bounded inviscid contribution to the entropy balance, while the viscous terms contribute only entropy dissipation.
Using diagonal norm summation-by-parts (SBP) operators together with appropriate numerical fluxes, these properties are extended to the semi-discrete formulation, resulting in nonlinear stability in the $L^2$ sense.

The accuracy, robustness, and scalability of the proposed method are demonstrated in practice through an extensive set of numerical experiments, ranging from canonical two-dimensional verification cases to large-scale turbulent and supersonic simulations involving moving boundaries and fluid–structure interaction. The results confirm the suitability of high-order entropy-stable schemes for complex moving-domain problems across a broad range of flow regimes and multiphysics applications.

Because the analysis relies on the SBP property rather than on a particular discretization, the framework naturally extends to a broad class of methods based on diagonal-norm SBP operators, including finite volume, finite element, and flux reconstruction schemes.
\end{abstract}

\begin{keyword}
  Compressible Navier--Stokes equations \sep Entropy stability \sep Moving-wall boundary conditions \sep Arbitrary Lagrangian Eulerian formulation \sep Nodal discontinuous Galerkin \sep Summation-by-parts operators \sep Simultaneous-approximation-terms
\end{keyword}
\end{frontmatter}

\section{Introduction}

Arbitrary Lagrangian--Eulerian (ALE) formulations provide a natural framework for the numerical simulation of flows on moving and deforming domains. Such problems arise in a wide range of engineering applications, including configurations with moving or rotating components, fluid-structure interaction, and prescribed body motion. From a computational perspective, ALE methods are also attractive because mesh motion can preserve a body-fitted discretization and enable redistribution of degrees of freedom, also referred to as $r$-adaptation, thereby improving the distribution of resolution in regions of interest, particularly near solid walls and within boundary layers.

The ALE framework has a long history, beginning with the seminal work of Hirt and co-workers \cite{hirt1974ALE} and followed by important developments for incompressible flow, finite-element formulations, and fluid-structure interaction \cite{hughes1981ALE,donea1982ALE,donea1992TransientAdvection}. Subsequent work established conservation and geometric conservation law properties on moving meshes \cite{farhat2001DiscreteGCL}, as well as stability analyses for model problems \cite{donea2004ALEMethodsReview,formaggia2004ALELinearStability} and high-order discretizations \cite{nikkar2015EnergyStableDeforming,kopriva2016MovingHexahedral}. More recently, ALE formulations have been combined with high-order summation-by-parts (SBP) and discontinuous Galerkin (DG) discretizations, including entropy-conservative and entropy-stable formulations for the compressible Navier--Stokes equations \cite{yamaleev2019EntropyStableALE,schnucke2020EntropyStableALE,krais2020SplitFormALE}.

The growing interest in high-order/variable-order ALE methods is closely tied to current demands in computational fluid dynamics \cite{slotnick2014CFDVision2030}. Advances in computational power have made higher-fidelity simulations increasingly feasible, including large-eddy simulations and other scale-resolving computations of unsteady flows in geometrically complex configurations. In this setting, high-order methods are particularly attractive because they can deliver a given level of accuracy with fewer degrees of freedom (DOFs) than low-order methods and typically exhibit improved dispersion and dissipation properties. Their robustness, however, depends critically on the design of provably stable formulations, especially for under-resolved flows, discontinuities, sharp gradients, and in the presence of complex boundaries.

Among the available stabilization frameworks, entropy stability has emerged as a particularly appealing approach for compressible flows because, when positivity is satisfied, it provides a nonlinear stability mechanism consistent with the second law of thermodynamics \cite{tadmor2003ReviewEntropyAnalysis}. For the compressible Navier--Stokes equations, entropy-stable formulations have been developed for a variety of high-order discretizations, including SBP \cite{fisher2013SStabilityFD,fisher2012PhD} and DGSEM/DG-SBP schemes \cite{carpenter2014SSDC,friedrich2018SStabilityHP,fernandez2020SStabilityHPRefinement,fernandez2020SStabilityPRefinement,friedrich2018SStableSpaceTime,chan2018SStabilityDG,crean2018EntropySBPEulerCurved,parsani2016SSDCStaggered}. A recurring difficulty in this theory, however, is the treatment of boundary conditions \cite{svard2018SStabilitySolidWall,parsani2015SStabilitySolidWallBC,svard2014SStabilityEulerWall}. Boundary contributions play a decisive role in the entropy balance, and stable wall boundary conditions are therefore essential to the robustness of the overall discretization \cite{parsani2015SStabilitySolidWallBC,dalcin2019WallBC,chan2022SStableModalDGWallBC} -- solid boundaries is very often where the ``action" begins.

This issue becomes even more delicate in the ALE setting. On moving and deforming meshes, the entropy balance couples the physical fluxes, the grid velocity, and the metric terms introduced by the coordinate transformation. As a result, entropy-stable wall treatments developed for static grids do not directly extend to general ALE boundaries. Although entropy-conservative and entropy-stable ALE formulations for the compressible Navier--Stokes equations have been proposed in the literature, existing analyses either focus on periodic domains or do not provide an analysis and proof of the moving-wall boundary treatment \cite{yamaleev2019EntropyStableALE,schnucke2020EntropyStableALE}.

The present work addresses this gap. We first derive, at the continuous level, the entropy-conservative condition associated with moving-wall boundary conditions. The analysis is carried out for general moving and deforming ALE boundaries and yields physically relevant boundary conditions, including no-slip adiabatic moving walls and prescribed entropy heat flux. We then prove semi-discrete entropy conservation and entropy stability of the proposed wall treatment within a DG-SBP-SAT framework. Although the derivation is presented in that setting, the resulting construction is formulated in a way that is also relevant to other discretization families, including finite-volume, finite-difference, finite-element, flux-reconstruction methods, etc.. Finally, the theoretical results are supported by numerical experiments on challenging moving-boundary test cases.
The paper is organized as follows. Section \ref{sec:entropy_continuous} presents the entropy analysis at the continuous level. Section \ref{sec:semi-discrete} introduces the relevant SBP operators and derives the semi-discrete form of the equations. Entropy conservation for the moving-wall boundary condition is discussed in Section \ref{sec:moving_wall_bc}. A series of numerical test cases, convergence, accuracy, and more realistic applications are presented in Section \ref{sec:numerical-tests}. Finally, conclusions are discussed in Section \ref{sec:conclusions}.

\section{Continuous entropy stability theory}\label{sec:entropy_continuous}
\subsection{The compressible Navier--Stokes equations}\label{subsec:compressible_ns}
We consider the compressible Navier--Stokes equations written in terms of the conservative variables, $\Q$,:
\begin{equation}
\label{eq:compressible_ns}
\begin{split}
&\frac{\partial\Q}{\partial t}+\sum\limits_{m=1}^{3}\frac{\partial \Fxm}{\partial \xm} =\sum\limits_{m=1}^{3}\frac{\partial\Fxmv}{\partial\xm}, \quad
\forall \bm{x} \in\Omega,\quad t\ge 0,\\
&\mathcal{B}\left(\Q,\nabla\Q;\bm{x},t\right)=\GB\left(\bm{x},t\right),
\quad\forall \bm{x}\in\Gamma,\quad t\ge 0,\\
&\Q\left(\bm{x},0\right)=\Gzero\left(\bm{x},0\right), \quad
\forall \bm{x} \in\Omega.
\end{split}
\end{equation}
Here $\mathcal{B}$ denotes the boundary operator. Its precise form is specified later for moving solid walls.
\begin{equation*}
\Q = \left[\rho,\rho\,\Uone,\rho\,\Utwo,\rho\,\Uthree,\rho\,\E\,\right]\Tr,
\end{equation*}
\begin{equation*}
\Fxm = \left[\rho\,\fnc{U}_{m},\rho\,\fnc{U}_{m}\,\Uone+\delta_{m1}\fnc{P},\rho\,\fnc{U}_{m}\,\Utwo+\delta_{m2}\fnc{P},\rho\,\fnc{U}_{m}\,\Uthree+\delta_{m3}\fnc{P},\rho\,\fnc{U}_{m}\fnc{H}\right]\Tr,
\end{equation*}
where $\rho$ is the density, $\Ui$ is the velocity in the $i-$th component, $\fnc{P}$ is the pressure, $\fnc{H}$ is the specific total enthalpy, $\E$ is the specific total energy, $\delta_{i,j}$ is the Kronecker delta and $\Fxm$ is the inviscid flux. The viscous flux $\Fxmv$ is given as
\begin{equation}
\label{eq:Fv}
\Fxmv=\left[0,\tau_{1m},\tau_{2m},\tau_{3m},\sum\limits_{i=1}^{3}\tau_{im}\,\fnc{U}_{i} + \kappa\,\frac{\partial \fnc{T}}{\partial\xm}\right]\Tr,
\end{equation}
where $\fnc{T}$ is the temperature, $\kappa = \kappa(\fnc{T})$ is the thermal conductivity, and the viscous stresses are given by
\begin{equation*}
\tau_{ij} = \mu\left(\frac{\partial\fnc{U}_{i}}{\partial x_{j}}+\frac{\partial\fnc{U}_{j}}{\partial x_{i}}
-\delta_{ij}\frac{2}{3}\sum\limits_{n=1}^{3}\frac{\partial\fnc{U}_{n}}{\partial x_{n}}\right),
\end{equation*}
where $\mu = \mu(\fnc{T})$ is the dynamic viscosity.

The constitutive relations that close the system are:
\begin{equation*}
\fnc{H} = c_{\fnc{P}}\fnc{T}+\frac{1}{2}\,\U\Tr\U,\quad \fnc{P} = \rho \, R \, \fnc{T},\quad R = \frac{R_{u}}{M_{w}},
\end{equation*}
where $c_{\fnc{P}}$ is the specific heat at constant pressure, $\U = \left(\fnc{U}_{1}, \fnc{U}_{2} , \fnc{U}_{3}\right)\Tr$ is the velocity vector, $R_{u}$ is the universal gas constant, and $M_{w}$ is the molecular weight of the gas. Finally, the thermodynamic entropy is given as
\begin{equation*}
s=\frac{R}{\gamma-1}\log\left(\frac{\fnc{T}}{\fnc{T}_{\infty}}\right)-R\log\left(\frac{\rho}{\rho_{\infty}}\right),\quad \gamma=\frac{c_{\fnc{P}}}{c_{\fnc{P}}-R},
\end{equation*}
with $\fnc{T}_{\infty}$ and $\rho_{\infty}$ the reference temperature and density, respectively. The definition of the thermodynamic entropy, $s$, will be useful in the following sections.

\subsection{Geometric mapping}\label{subsec:mapping}
Having introduced the Navier--Stokes equations in Cartesian coordinates, we now account for grid motion. Let the physical domain be described by the space-time coordinates \mbox{$[\,t,\bm{x}\,] = [\,t,x_1,x_2,x_3\,]$} and the reference domain by $[\,\tau,\bm{\xi}\,] = [\,\tau,\xi_1,\xi_2,\xi_3\,]$. The mapping from the reference domain to the physical domain, denoted as $\bm{\chi}(\tau,\bm{\xi})$, is assumed to be invertible and, therefore, induces the inverse mapping $\bm{\xi}(t,\bm{x})=\bm{\chi}^{-1}(t,\bm{x})$. Consequently, one can equivalently compute either $\bm{x}=\bm{\chi}(\tau,\bm{\xi})$ or $\bm{\xi}=\bm{\chi}^{-1}(t,\bm{x})$. 

We first collect a set of differential identities associated with the mapping between the reference and physical domains that will be used in the derivation of the ALE formulation in the next section. Starting from the inverse mapping, we have
\begin{equation*}
    \bm{\xi} = \bm{\chi}^{-1}(t,\bm{x}) = \bm{\chi}^{-1}(t,\bm{\chi}(\tau,\bm{\xi})) = \bm{\xi}(t,\bm{\chi}(\tau,\bm{\xi}))
\end{equation*}
Differentiating this relation with respect to $\tau$ while holding the reference coordinate $\bm{\xi}$ fixed yields
\begin{equation*}
    0 = \frac{d}{d \tau}\bm{\chi}^{-1}\bigl(t,\bm{\chi}(\tau,\bm{\xi})\bigr)
    = \frac{\partial t}{\partial \tau} \frac{\partial \bm{\xi}}{\partial t}
    + \frac{\partial \bm{\xi}}{\partial \bm{x}} \frac{\partial \bm{x}}{\partial \tau}.
\end{equation*}
Since the mapping is invertible, this relation implies the identity
\begin{equation*}
    \frac{\partial \xi_l}{\partial t}
    = -\sum\limits_{m=1}^{3} \frac{\partial \xi_l}{\partial x_m} \Vm,
\end{equation*}
where the grid velocity in physical space is defined by
\begin{equation*}
    \Vg = \frac{\partial \bm{\chi}}{\partial \tau}.
\end{equation*}
In the remainder of this section, $\Vm$ denotes the $m$-th component of the grid velocity $\Vg$.

The temporal and spatial derivatives transform according to
\begin{align}
\frac{\partial}{\partial t}
&= \frac{\partial}{\partial \tau} + \sum\limits_{l=1}^{3} \frac{\partial \xi_l}{\partial t} \frac{\partial}{\partial \xi_l}
= \frac{\partial}{\partial \tau} - \sum\limits_{l,m=1}^{3} \frac{\partial x_m}{\partial \tau} \frac{\partial \xi_l}{\partial x_m} \frac{\partial}{\partial \xi_l}
= \frac{\partial}{\partial \tau} - \sum\limits_{l,m=1}^{3} \Vm \frac{\partial \xi_l}{\partial x_m} \frac{\partial}{\partial \xi_l},
\label{eq:transformation1}
\\
\frac{\partial}{\partial x_m} &= \sum\limits_{l=1}^{3} \frac{\partial \xi_l}{\partial x_m} \frac{\partial}{\partial \xi_l}.
\label{eq:transformation2}
\end{align}

Finally, we define the Jacobian of the mapping from the reference space-time coordinates to the physical space-time coordinates, $\J$, as
\begin{equation}
    \J = \left | \frac{\partial[\,t,\bm{x}\,]}{\partial[\,\tau,\bm{\xi}\,]} \right |, \qquad
\textrm{where} \qquad
    \frac{\partial[\,t,\bm{x}\,]}{\partial[\,\tau,\bm{\xi}\,]} =  
    \begin{bmatrix}
    1 & 0 & 0 & 0\\
    \frac{\partial x_1}{\partial \tau} & \frac{\partial x_1}{\partial \xi_1} & \frac{\partial x_1}{\partial \xi_2} & \frac{\partial x_1}{\partial \xi_3}\\
    \frac{\partial x_2}{\partial \tau} & \frac{\partial x_2}{\partial \xi_1} & \frac{\partial x_2}{\partial \xi_2} & \frac{\partial x_2}{\partial \xi_3}\\
    \frac{\partial x_3}{\partial \tau} & \frac{\partial x_3}{\partial \xi_1} & \frac{\partial x_3}{\partial \xi_2} & \frac{\partial x_3}{\partial \xi_3}\\
    \end{bmatrix}.
    \label{eq:jacobian}
\end{equation}
Fig.~\ref{fig:mapping} shows a graphical representation of the mapping between reference and physical space.
\begin{figure}[H]
    \centering
    \includegraphics[width=0.7\linewidth]{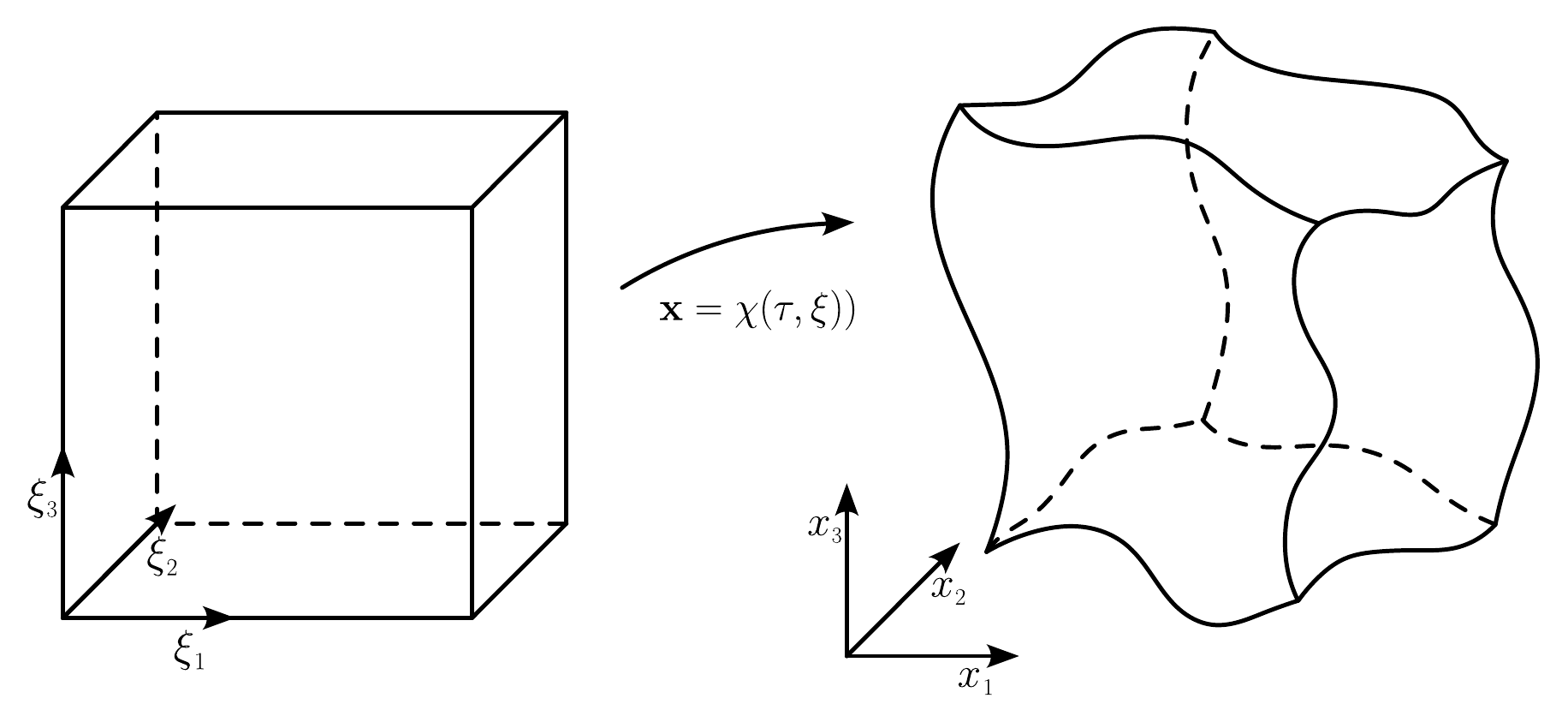}
    \caption{Schematic representation of the mapping from reference to physical space.}
    \label{fig:mapping}
\end{figure}

\subsection{ALE formulation}
Having introduced the compressible Navier--Stokes equations and the coordinate mapping, we now derive the transformed system used in the subsequent analysis. Starting from the compressible Navier--Stokes equations \eqref{eq:compressible_ns}, we multiply by the Jacobian $\J$ defined in \eqref{eq:jacobian} and apply the transformations given in \eqref{eq:transformation1} and \eqref{eq:transformation2} to obtain
\begin{equation*}
    \J \frac{\partial \Q }{\partial \tau} - \J \sum\limits_{l,m=1}^3 \Vm \frac{\partial \xi_l}{\partial \xm} \frac{\partial \Q}{\partial \xi_l} 
    + \J \sum_{l,m=1}^3 \frac{\partial \xi_l}{\partial \xm} \frac{\partial \Fxm}{\partial \xi_l} = \J \sum_{l,m=1}^3 \frac{\partial \xi_l}{\partial \xm} \frac{\partial \Fxmv}{\partial \xi_l}.
\end{equation*}

We seek to recast the equations in a form such that the differential operators act on both the Jacobian and the conservative variables. Likewise, the divergence operator in the reference coordinates acts on the inviscid fluxes and the grid-velocity contribution. By applying the chain rule to the temporal derivative and to the spatial derivative with respect to $\xi_l$, these terms can be written as follows:
\begin{align*}
    \frac{\partial \left( \J \Q \right)}{\partial \tau} 
    &- \sum\limits_{l,m=1}^{3} \frac{\partial}{\partial\xi_l} \left( \J \frac{\partial \xi_l}{\partial \xm} \Vm \Q \right)
    - \Q \left( \frac{\partial \J}{\partial \tau} 
    - \sum\limits_{l,m=1}^{3} \frac{\partial}{\partial\xi_l} \left( \J \frac{\partial \xi_l}{\partial \xm} \Vm \right) \right) \\
    &+ \sum_{l,m=1}^3 \frac{\partial}{\partial \xi_l} \left (\J \frac{\partial \xi_l}{\partial \xm} \left( \Fxm - \Fxmv  \right) \right)
    -  \sum_{m=1}^3 \left( \left( \Fxm - \Fxmv  \right) \sum_{l=1}^3 \frac{\partial}{\partial \xi_l} \left (\J \frac{\partial \xi_l}{\partial \xm} \right) \right) = 0,
\end{align*}
where we identify the following metric identities:
\begin{equation}
    \frac{\partial \J}{\partial \tau} 
    - \sum\limits_{l,m=1}^{3} \frac{\partial}{\partial\xi_l} \left( \J \frac{\partial \xi_l}{\partial \xm} \Vm \right)=0, 
    \label{eq:timeGCL}
\end{equation}
\begin{equation}
    \sum_{l=1}^3 \frac{\partial}{\partial \xi_l} \left (\J \frac{\partial \xi_l}{\partial \xm} \right)=0, \qquad m=1,2,3.
    \label{eq:spaceGCL}
\end{equation}
If the metric identities are satisfied, the compressible Navier--Stokes equations on moving curvilinear grids can be written in the following form:

\begin{equation}
    \frac{\partial \left( \J \Q \right)}{\partial \tau} + \sum\limits_{l,m=1}^3 \frac{\partial}{\partial \xi_l} \left (\J \frac{\partial \xi_l}{\partial \xm} \left( \FxmALE - \Fxmv  \right) \right) = 0,
    \label{eq:continuous_ALE_NS}
\end{equation}
where the ALE flux combines the inviscid flux and grid motion
\begin{equation*}
    \FxmALE = \underbrace{\Fxm}_{\textbf{INVISCID  \, FLUX}} - \underbrace{\Vm \Q}_{\textbf{GRID \, MOTION}}.
\end{equation*}

\subsection{Entropy variables, entropy fluxes, and potentials}
The foundations of entropy inequalities for hyperbolic systems of conservation laws were established in the seminal works of Godunov \cite{godunov1961InterestingClass} and Friedrichs and Lax \cite{friedrichs1971SystemOfConservationEquationsWithConvexExtension,lax1973HyperbolicSystemsCLaws}. The associated theory, including convex entropy functions, entropy solutions, and admissibility criteria for shock waves, was subsequently developed in depth by many leading researchers, including Harten \cite{harten1983SymmetricSystemCLaws} and Mock \cite{mock1980SystemCLaws}, and later synthesized in classical monographs such as Smoller’s \cite{smoller1994ShockWavesReactionDiffusion}.
Building on these continuous results, Tadmor introduced a discrete counterpart of the entropy inequality and thereby established the modern framework for entropy-conservative and entropy-stable numerical schemes \cite{tadmor1987SStability,tadmor2003ReviewEntropyAnalysis}. Together, these seminal contributions showed that the entropy inequality plays a central role in the nonlinear stability of hyperbolic systems and their numerical discretizations, while also ensuring the physical admissibility of weak solutions \cite{smoller1994ShockWavesReactionDiffusion,dafermos2016HyperbolicCLaws}.

In the context of the compressible Navier--Stokes equations, the mathematical entropy function, $\fnc{S}$, is defined starting from the thermodynamic entropy, $s$, as \cite{hughes1986FEMEntropy}:
\begin{equation}\label{eq:entropy_function}
\fnc{S}=-\rho s,
\end{equation}
Following the results presented in \cite{friedrichs1971SystemOfConservationEquationsWithConvexExtension,lax1973HyperbolicSystemsCLaws}, a scalar function $\fnc{S}=\fnc{S}(\Q)$ is an entropy function of system \eqref{eq:compressible_ns} if:
\begin{itemize}
    \item $\fnc{S}(\Q)$, when differentiated with respect to the conservative variables, simultaneously contracts the inviscid spatial fluxes:
    \begin{equation}
        \label{eq:contraction_inviscid_flux}
        \frac{\partial \fnc{S}}{\partial \Q}\frac{\partial\Fxm}{\partial \xm}
        = \frac{\partial \fnc{S}}{\partial \Q}\frac{\partial\Fxm}{\partial \Q}\frac{\partial\Q}{\partial \xm}
        = \frac{\partial\fnc{F}_{\xm}}{\partial \Q}\frac{\partial \Q}{\partial \xm}
        = \frac{\partial\fnc{F}_{\xm}}{\partial \xm},\qquad m=1,2,3,
    \end{equation}
    where $\fnc{F}_{\xm}$ is the entropy flux, and the entropy variables are defined as $\W=\frac{\partial \fnc{S}}{\partial \Q}$.
    \item The entropy variables, $\W$, symmetrize the system of equations~\eqref{eq:compressible_ns}:
    \begin{equation*}
        \frac{\partial\Q}{\partial\W}\frac{\partial\W}{\partial t}+\sum\limits_{m=1}^{3}\frac{\partial\Fxm}{\partial\W}\frac{\partial\W}{\partial \xm} 
        = \sum\limits_{m,j=1}^{3}\frac{\partial}{\partial\xm} \left(\Cij{m}{j}\frac{\partial\W}{\partial x_j}\right), 
    \end{equation*}
    where the symmetry conditions $\frac{\partial \Q}{\partial \W} = \left( \frac{\partial \Q}{\partial \W} \right)\Tr$ and $\frac{\partial \Fxm}{\partial \W} = \left( \frac{\partial \Fxm}{\partial \W} \right)\Tr$ hold. For the definition of the symmetric positive semi-definite matrices $\Cij{m}{j}$, see \cite{parsani2015SStabilitySolidWallBC,fisher2012PhD}.
\end{itemize}
In physical coordinates, the viscous fluxes can be written in entropy-variable form as
\begin{equation*}
    \Fxmv = \sum\limits_{j=1}^{3}\Cij{m}{j} \frac{\partial \W}{\partial x_j},
    \qquad m=1,2,3.
\end{equation*}
Under the ALE mapping, the contravariant viscous flux in the reference direction $\xi_l$ is
\begin{equation*}
    \sum\limits_{m=1}^{3} \J\frac{\partial \xi_l}{\partial x_m}\Fxmv 
    = \sum\limits_{n=1}^{3} \Chatij{l}{n} \frac{\partial \W}{\partial \xi_n},
    \qquad l=1,2,3,
\end{equation*}
where
\begin{equation}
    \Chatij{l}{n} = \J \sum\limits_{m,j=1}^{3} \frac{\partial \xi_l}{\partial x_m} \Cij{m}{j} \frac{\partial \xi_n}{\partial x_j}.
    \label{eq:mapped_viscous_matrix}
\end{equation}
In other words, if
\begin{equation*}
    \mathcal{M}_{l}^{m} = \J\frac{\partial \xi_l}{\partial x_m},
\end{equation*}
then,
\begin{equation}
    \Chatij{l}{n} = \J^{-1} \sum\limits_{m,j=1}^{3} \mathcal{M}_{l}^{m} \Cij{m}{j} \mathcal{M}_{n}^{j}.
    \label{eq:mapped_viscous_matrix_metrics}
\end{equation}
The block matrix formed by the blocks $\Chatij{l}{n}$ is symmetric positive semi-definite whenever the Cartesian viscous block matrix formed by \(\Cij{m}{j}\) is symmetric positive semi-definite and $\J>0$.
In the specific case of the compressible Navier--Stokes equations, the entropy variables, $\W$, are given by
\begin{equation}
    \label{eq:EntropyVariables}
    \W = \left[\frac{\fnc{H} - \U\Tr\U}{\fnc{T}} -s,\frac{\Uone}{\fnc{T}},\frac{\Utwo}{\fnc{T}},\frac{\Uthree}{\fnc{T}},-\frac{1}{\fnc{T}}\right]\Tr.
\end{equation}
The first component of $\W$ can also be written as
\[
\frac{\fnc{H}-\U\Tr\U}{\fnc{T}}-s
=
\frac{c_{\fnc{P}}\fnc{T}-\frac12\U\Tr\U}{\fnc{T}}-s,
\]
because $\fnc{H}$ denotes the specific total enthalpy.

A sufficient condition for the convexity of the mathematical entropy $\fnc{S}(\Q)$ is $\rho>0$ and $\fnc{T}>0$; see \cite{fisher2012PhD} for the proof. Under this positivity condition, the Hessian 
\begin{equation*}
    \frac{\partial^2 \fnc{S}}{\partial \Q^2} = \frac{\partial \W}{\partial \Q},
\end{equation*}
is symmetric positive definite, i.e.,
\begin{equation*}
    \bm{v}^{\mathrm{T}} \frac{\partial^2 \fnc{S}}{\partial \Q^2} \bm{v} > 0
    \qquad \forall\bm{v} \neq 0.
\end{equation*}
Due to the symmetry of $\partial \Q/\partial \W$ and $\partial \Fxm/\partial \W$, there exist scalar functions of the entropy variables whose Jacobians yield the conservative variables $\Q$ and the inviscid fluxes $\Fxm$, namely,
\begin{equation*}
  \Q\Tr=\frac{\partial\fnc{\varPhi}}{\partial\W},\qquad
  \left(\Fxm\right)^{\mr{\top}}=\frac{\partial\fnc{\varPsi}_{\xm}}{\partial\W}.
\end{equation*}
The function $\fnc{\varPhi}$ is referred to as the potential, while the functions $\fnc{\varPsi}_{\xm}$ are the potential fluxes in the $\xm$ direction. The pair $(\fnc{\varPhi},\fnc{\varPsi}_{\xm})$ is called the potential--potential flux pair \cite{tadmor2003ReviewEntropyAnalysis}. The close relation between the entropy pair and the potential--potential flux pair is stated in the following theorem, due to Godunov \cite{godunov1961InterestingClass}; see also \cite{harten1983SymmetricSystemCLaws}.

\begin{theorem}
\label{th:Godunov}
Let \(\fnc{S}(\Q)\) be a strictly convex entropy function and let
$\W=\frac{\partial \fnc{S}}{\partial \Q}$
be the entropy variables. Define the entropy potential by
\begin{equation*}
    \fnc{\varPhi}(\W) = \W\Tr\Q(\W) - \fnc{S}(\Q(\W)).
\end{equation*}
Then,
\begin{equation*}
    \Q\Tr=\frac{\partial\fnc{\varPhi}}{\partial\W}.
\end{equation*}
Moreover, if the entropy compatibility condition holds for the inviscid fluxes, then the entropy-flux potentials
\begin{equation}
    \label{eq:GodFpotential}
    \fnc{\varPsi}_{\xm}(\W) = \W\Tr \Fxm(\Q(\W)) - \fnc{F}_{\xm}(\Q(\W))
\end{equation}
satisfy
\begin{equation*}
    \left(\Fxm\right)^{\mr{\top}}=\frac{\partial\fnc{\varPsi}_{\xm}}{\partial\W}.
\end{equation*}
\end{theorem}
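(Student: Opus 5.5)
The plan is a direct differentiation with the chain rule, using that $\W\mapsto\Q(\W)$ is well defined and differentiable. Strict convexity of $\fnc{S}$ makes the Hessian $\partial\W/\partial\Q=\partial^2\fnc{S}/\partial\Q^2$ symmetric positive definite. Hence the map $\Q\mapsto\W=\partial\fnc{S}/\partial\Q$ is locally invertible by the inverse function theorem, and it is globally injective on the convex admissible set because gradients of strictly convex functions are injective. We may therefore regard $\Q=\Q(\W)$ with Jacobian $\partial\Q/\partial\W=(\partial\W/\partial\Q)^{-1}$, which is again symmetric.

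For the first claim, differentiate $\fnc{\varPhi}(\W)=\W\Tr\Q(\W)-\fnc{S}(\Q(\W))$ with respect to $\W$, treating derivatives as row vectors. The product rule gives
\begin{equation*}
\frac{\partial\fnc{\varPhi}}{\partial\W}=\Q\Tr+\W\Tr\frac{\partial\Q}{\partial\W}-\frac{\partial\fnc{S}}{\partial\Q}\frac{\partial\Q}{\partial\W}.
\end{equation*}
By definition $\partial\fnc{S}/\partial\Q=\W\Tr$ (as a row vector), so the last two terms cancel, leaving $\Q\Tr$. This is the Legendre-transform identity.

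For the second claim, the entropy compatibility condition \eqref{eq:contraction_inviscid_flux} holds for arbitrary gradients $\partial\Q/\partial\xm$. It therefore yields the pointwise identity
\begin{equation*}
\W\Tr\frac{\partial\Fxm}{\partial\Q}=\frac{\partial\fnc{F}_{\xm}}{\partial\Q}.
\end{equation*}
Differentiating \eqref{eq:GodFpotential} then gives
\begin{equation*}
\frac{\partial\fnc{\varPsi}_{\xm}}{\partial\W}=\left(\Fxm\right)\Tr+\W\Tr\frac{\partial\Fxm}{\partial\Q}\frac{\partial\Q}{\partial\W}-\frac{\partial\fnc{F}_{\xm}}{\partial\Q}\frac{\partial\Q}{\partial\W}.
\end{equation*}
By the compatibility identity the last two terms cancel, which proves the claim.

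The only step requiring care is passing from the differential statement \eqref{eq:contraction_inviscid_flux}, which involves spatial derivatives of a solution, to the algebraic identity $\W\Tr\,\partial\Fxm/\partial\Q=\partial\fnc{F}_{\xm}/\partial\Q$. This works because $\partial\Q/\partial\xm$ can be prescribed arbitrarily at a point, e.g.\ with smooth test states $\Q(\bm{x})$ that are linear in $\xm$. The remaining care is the global invertibility of $\W(\Q)$, which the positivity conditions $\rho>0$, $\fnc{T}>0$ guarantee for the Navier--Stokes entropy. The symmetry statements quoted before the theorem then follow as corollaries: $\partial\Q/\partial\W$ and $\partial\Fxm/\partial\W$ are Hessians of $\fnc{\varPhi}$ and $\fnc{\varPsi}_{\xm}$, respectively.
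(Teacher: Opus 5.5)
Your proof is correct and is the standard Legendre-transform/chain-rule argument. The paper gives no proof of its own here but cites Godunov and Harten, whose argument is this same cancellation of $\W\Tr\,\partial\Q/\partial\W$ (resp.\ $\W\Tr\,\partial\Fxm/\partial\Q\,\partial\Q/\partial\W$) against the derivative of $\fnc{S}$ (resp.\ $\fnc{F}_{\xm}$). One caveat: strict convexity alone does not make the Hessian positive definite (e.g.\ $q^4$ at $q=0$), so the inverse-function step should rest on the positive-definite Hessian the paper assumes under $\rho>0$, $\fnc{T}>0$.
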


For the compressible Navier--Stokes equations, the entropy--entropy flux pair $(\fnc{S},\fnc{F}_{\xm})$ is given by
\begin{equation*}
    \fnc{S} = -\rho \, s, \qquad \fnc{F}_{x_i}=-\rho \, \Ui \, s, \qquad i=1,2,3,
\end{equation*}
while the potential--potential flux pair $(\fnc{\varPhi},\fnc{\varPsi}_{x_i})$ is
\begin{equation}
    \fnc{\varPhi} = \rho \, R, \qquad \fnc{\varPsi}_{x_i} = \rho \, \Ui \, R,
    \qquad i=1,2,3.
    \label{eq:PotentialPotentialFlux}
\end{equation}
A global estimate for a strictly convex mathematical entropy provides a nonlinear measure of stability for the conservative variables. In particular, as long as the numerical solution remains in a compact subset of the physically admissible set, with density and temperature bounded away from zero, the mapping between conservative and entropy variables remains one-to-one and the Hessian of the entropy is positive definite. Uniform convexity then allows a bound on the total entropy to be related to an $L^{2}$-type bound on the conservative state. The entropy estimate should therefore be interpreted not only as a discrete analogue of the second law of thermodynamics, but also as a mechanism for controlling the growth of the numerical solution. This argument relies on the preservation of positive density and temperature, which must generally be ensured by an additional positivity-preserving mechanism \cite{dafermos2016HyperbolicCLaws}.

\begin{remark}
    The stability analysis presented in this work applies to the spatial semi-discretization. Entropy conservation or entropy stability of the spatial operator does not automatically imply the corresponding property for the fully discrete method, since the temporal approximation can introduce additional entropy production or dissipation. As noted by Tadmor \cite{tadmor2003ReviewEntropyAnalysis}, implicit schemes may contribute numerical entropy dissipation, whereas a general explicit time-integration method can produce entropy even when coupled with an entropy-conservative spatial discretization. Consequently, a fully discrete entropy estimate depends on the combined behavior of the spatial and temporal operators. Exact entropy conservation, or a prescribed entropy inequality at the fully discrete level, requires an entropy-compatible time integrator, such as a suitable space--time discretization or a relaxation Runge--Kutta method \cite{ranocha2019RelaxationRK}.
\end{remark}

\subsection{Continuous moving-wall entropy balance}
We now derive the continuous entropy balance for moving-wall boundaries. Building on the relations established above, we contract the ALE system \eqref{eq:continuous_ALE_NS} with the entropy variables, $\W$:
\begin{equation*}
    \underbrace{\W\Tr \frac{\partial \left( \J \Q \right)}{\partial \tau}}_{\textbf{TIME}} = 
    - \underbrace{\sum\limits_{l,m=1}^3 \W\Tr \frac{\partial}{\partial \xi_l} \left[ \J \frac{\partial \xi_l}{\partial \xm} \left( \Fxm - \Vm \Q \right) \right] }_{\textbf{INVISCID + ALE}}
    + \underbrace{\sum\limits_{l,m=1}^3 \W\Tr \frac{\partial}{\partial \xi_l} \left (\J \frac{\partial \xi_l}{\partial \xm} \Fxmv \right)}_{\textbf{VISCOUS}}
\end{equation*}

The contribution of the temporal derivative can be rewritten as
\begin{align*}
    \textbf{TIME}
    &= \W\Tr \Q \frac{\partial \J}{\partial \tau}
       + \frac{\partial \fnc{S}}{\partial \Q}\frac{\partial \Q}{\partial \tau}\J \\
    &= \W\Tr \Q \frac{\partial \J}{\partial \tau}
       + \frac{\partial \fnc{S}}{\partial \tau}\J \\
    &= \W\Tr \Q \frac{\partial \J}{\partial \tau}
       + \frac{\partial (\J \fnc{S})}{\partial \tau}
       - \fnc{S}\frac{\partial \J}{\partial \tau} \\
    &= \frac{\partial (\J \fnc{S})}{\partial \tau}
       + \left( \W\Tr \Q - \fnc{S} \right)\frac{\partial \J}{\partial \tau}.
\end{align*}

The convective contributions, arising from the inviscid and grid-motion terms, can be simplified using the entropy compatibility relation \eqref{eq:contraction_inviscid_flux}:
\begin{align*}
    \textbf{INVISCID}
    &= \sum\limits_{l,m=1}^3 \left[ \J \frac{\partial \xi_l}{\partial \xm} \W\Tr \frac{\partial \Fxm}{\partial \xi_l} +
    \W\Tr \Fxm \frac{\partial}{\partial \xi_l} \left( \J \frac{\partial \xi_l}{\partial \xm} \right)\right] \\
    &= \sum\limits_{l,m=1}^3 \left[ \J \frac{\partial \xi_l}{\partial \xm} \frac{\partial \fnc{F}_{\xm}}{\partial \xi_l} +
    \W\Tr \Fxm \frac{\partial}{\partial \xi_l} \left( \J \frac{\partial \xi_l}{\partial \xm} \right)\right]  \\
    &= \sum\limits_{l,m=1}^3 \left[\frac{\partial }{\partial \xi_l} \left(\J \frac{\partial \xi_l}{\partial \xm} \fnc{F}_{\xm} \right) +
    \left(\W\Tr \Fxm - \fnc{F}_{\xm} \right) \frac{\partial}{\partial \xi_l} \left( \J \frac{\partial \xi_l}{\partial \xm} \right)\right],
\end{align*}
\begin{align*}
    \textbf{ALE}
    &= \sum\limits_{l,m=1}^3 \left[\J \frac{\partial \xi_l}{\partial \xm} \Vm \frac{\partial \fnc{S}}{\partial \Q} \frac{\partial \Q}{\partial \xi_l}
    + \W\Tr \Q \frac{\partial}{\partial \xi_l} \left( \J \frac{\partial \xi_l}{\partial \xm} \Vm \right)\right] \\
    &= \sum\limits_{l,m=1}^3 \left[\frac{\partial}{\partial \xi_l} \left( \J \frac{\partial \xi_l}{\partial \xm} \Vm \fnc{S} \right)
    - \fnc{S} \frac{\partial}{\partial \xi_l} \left( \J \frac{\partial \xi_l}{\partial \xm} \Vm \right) + \W\Tr \Q \frac{\partial}{\partial \xi_l} \left( \J \frac{\partial \xi_l}{\partial \xm} \Vm \right)\right] \\
    &= \sum\limits_{l,m=1}^3 \left[\frac{\partial}{\partial \xi_l} \left( \J \frac{\partial \xi_l}{\partial \xm} \Vm \fnc{S} \right)
    + \left(\W\Tr \Q -\fnc{S} \right) \frac{\partial}{\partial \xi_l} \left( \J \frac{\partial \xi_l}{\partial \xm} \Vm \right)\right].
\end{align*}

The viscous contribution can be written as
\begin{align*}
    \textbf{VISCOUS}
    &= \sum\limits_{l,n=1}^3
    \frac{\partial}{\partial\xi_l} \left(
    \W\Tr \Chatij{l}{n} \frac{\partial \W}{\partial \xi_n} \right)
    -
    \sum\limits_{l,n=1}^3 \frac{\partial \W\Tr}{\partial \xi_l} \Chatij{l}{n} \frac{\partial \W}{\partial \xi_n}.
\end{align*}

Finally, we obtain the differential form of the scalar entropy equation:
\begin{equation}
    \label{eq:entropy_equation}
    \begin{aligned}
        \frac{\partial \left( \J \fnc{S} \right )}{\partial \tau} = &-\sum\limits_{l,m=1}^{3}  \frac{\partial}{\partial\xi_l} \left[ \J\frac{\partial \xi_l}{\partial \xm} \left( \fnc{F}_{x_m} - \Vm\fnc{S} \right)\right] 
        + \sum\limits_{l,n=1}^{3} \left[ \frac{\partial}{\partial\xi_l} \left(
        \W\Tr \Chatij{l}{n} \frac{\partial \W}{\partial \xi_n} \right)
        - \frac{\partial \W\Tr}{\partial \xi_l} \Chatij{l}{n} \frac{\partial \W}{\partial \xi_n} \right] +\\
        &- \underbrace{\left( \W\Tr \Q - \fnc{S} \right)}_{\fnc{\varPhi}} \left[ \underset{Eq.\eqref{eq:timeGCL}}{\cancel{\frac{\partial \J}{\partial \tau} - \sum\limits_{l,m=1}^{3} \frac{\partial}{\partial\xi_l} \left( \J \frac{\partial \xi_l}{\partial \xm} \Vm \right)}} \right] 
        - \sum\limits_{m=1}^{3} \underbrace{\left(\W\Tr \Fxm - \fnc{F}_{\xm} \right)}_{\fnc{\varPsi}_{\xm}} \underset{Eq.\eqref{eq:spaceGCL}}{\cancel{\sum_{l=1}^3 \frac{\partial}{\partial \xi_l} \left (\J \frac{\partial \xi_l}{\partial \xm} \right)}}.
    \end{aligned}
\end{equation}
The last two terms vanish when the time and space metric identities \eqref{eq:timeGCL} and \eqref{eq:spaceGCL} are satisfied.

Integrating equation \eqref{eq:entropy_equation} over the domain $\Omega$ yields the global entropy conservation statement:
\begin{equation}
    \label{eq:continuous_entropy_estimate_discont}
    \begin{split}
        \int_{\Omega}\frac{\partial \left( \J\fnc{S} \right)}{\partial \tau}\mr{d}\Omega 
        = \frac{\mr{d}}{\mr{d}\tau}\int_{\Omega}\J\fnc{S}\mr{d}\Omega
        \leq \sum\limits_{l,m=1}^{3}\oint_{\Gamma}\left(\W\Tr \Fxmv - \J \frac{\partial \xi_l}{\partial \xm} \left( \fnc{F}_{\xm} -\Vm \fnc{S} \right) \right)\nxil\mr{d}\Gamma - DT,
    \end{split}
\end{equation}
where $\nxil$ is the $l$-th component of the outward-facing unit normal and
\[
DT = \sum\limits_{l,n=1}^{3}\int\limits_{\hat{\Omega}}\left(\frac{\partial \W}{\partial \xi_l}\right)^{\top} \Chatij{l}{n} \,
  \frac{\partial \W}{\partial \xi_n} \mr{d}\hat{\Omega} \ge 0.
\]

We note that viscous dissipation contributes a non-positive term to the evolution of the mathematical entropy, since $DT\ge 0$. Any increase of the mathematical entropy functional can therefore only arise from boundary contributions. For smooth flows, the inequality in \eqref{eq:continuous_entropy_estimate_discont} reduces to an equality.

We now isolate the boundary contribution for a moving reference element. For clarity, we write the boundary contribution on the reference unit cube, with grid velocity $\Vg$, as illustrated in Fig.~\ref{fig:cubeface}. Then, equation \eqref{eq:continuous_entropy_estimate_discont} gives
\begin{equation}
\label{eq:continuous_entropy_estimate_moving}
    \begin{aligned}
        & \frac{\mr{d}}{\mr{d}t} \int_{\hat{\Omega}} \J \fnc{S} \, \mr{d}\xi_1 \, \mr{d}\xi_2 \, \mr{d}\xi_3 =  -DT  \\
        & \:+\: \int_{\xi_1=0} \sum\limits_{m=1}^3 \left[
        + \J \frac{\partial \xi_1}{\partial \xm} \left( \Fxisca{m} - \Vm \, \fnc{S} \right) 
        - \, \W^{\top}\Cij{1}{m} \frac{\partial \W}{\partial \xi_m}
        \right] \mr{d}\xi_2 \, \mr{d}\xi_3 \\
        & \:+\: \int_{\xi_1=1} \sum\limits_{m=1}^3 \left[
        - \J \frac{\partial \xi_1}{\partial \xm} \left( \Fxisca{m} - \Vm \, \fnc{S} \right) 
        + \, \W^{\top}\Cij{1}{m} \frac{\partial \W}{\partial \xi_m}
        \right] \mr{d}\xi_2 \, \mr{d}\xi_3 \\
        & \:+\: \int_{\xi_2=0} \sum\limits_{m=1}^3 \left[
        + \J \frac{\partial \xi_2}{\partial \xm} \left( \Fxisca{m} - \Vm \, \fnc{S} \right) 
        - \, \W^{\top}\Cij{2}{m} \frac{\partial \W}{\partial \xi_m}
        \right] \mr{d}\xi_1 \, \mr{d}\xi_3 \\
        & \:+\: \int_{\xi_2=1} \sum\limits_{m=1}^3 \left[
        - \J \frac{\partial \xi_2}{\partial \xm} \left( \Fxisca{m} - \Vm \, \fnc{S} \right) 
        + \, \W^{\top}\Cij{2}{m} \frac{\partial \W}{\partial \xi_m}
        \right] \mr{d}\xi_1 \, \mr{d}\xi_3 \\
        & \:+\: \int_{\xi_3=0} \sum\limits_{m=1}^3 \left[
        + \J \frac{\partial \xi_3}{\partial \xm} \left( \Fxisca{m} - \Vm \, \fnc{S} \right) 
        - \, \W^{\top}\Cij{3}{m} \frac{\partial \W}{\partial \xi_m}
        \right] \mr{d}\xi_1 \, \mr{d}\xi_2 \\
        & \:+\: \int_{\xi_3=1} \sum\limits_{m=1}^3 \left[
        - \J \frac{\partial \xi_3}{\partial \xm} \left( \Fxisca{m} - \Vm \, \fnc{S} \right) 
        + \, \W^{\top}\Cij{3}{m} \frac{\partial \W}{\partial \xi_m}
        \right] \mr{d}\xi_1 \, \mr{d}\xi_2
    \end{aligned}
\end{equation}
The plus and minus signs in \eqref{eq:continuous_entropy_estimate_moving} reflect the orientation of the outward-facing normals on the six faces of the unit cube $\hat{\Omega}$.

\begin{figure}[H]
    \centering
    \includegraphics[width=0.3\linewidth]{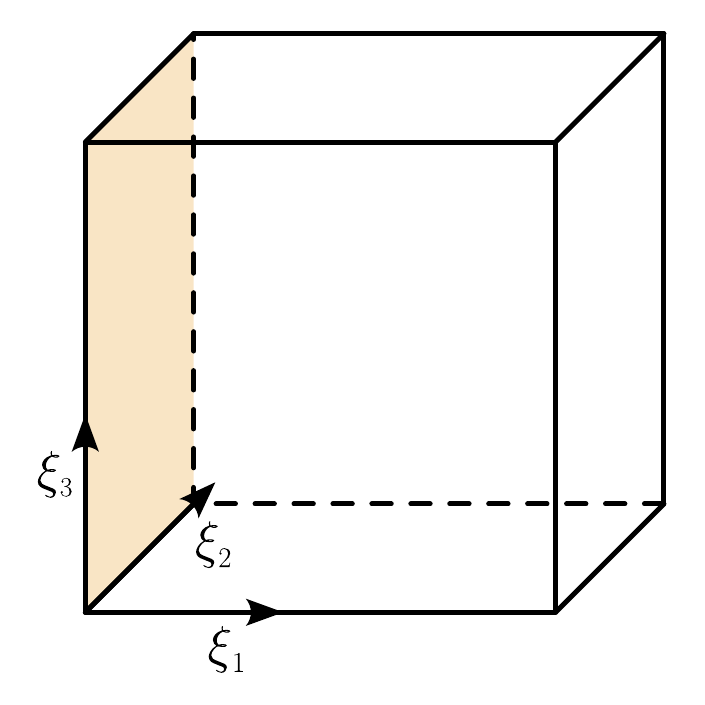}
    \caption{Representation of the reference element domain. We consider the face $\xi_1=0$ in the following proofs.}
    \label{fig:cubeface}
\end{figure}

To isolate the moving-wall contribution, we consider the reference face \(\xi_1=0\) and assume that the contributions from the remaining faces are entropy conservative. Then \eqref{eq:continuous_entropy_estimate_moving} reduces to
\begin{equation}
    \label{eq:continuous_entropy_estimate_1wall_moving}
    \frac{\mr{d}}{\mr{d}t} \int_{\Omega} \J \fnc{S} \, \mr{d}\xi_1 \, \mr{d}\xi_2 \, \mr{d}\xi_3 =  -DT
    \:+\: \int_{\xi_1=0} \sum\limits_{m=1}^3 \left[ \J \frac{\partial \xi_1}{\partial x_m} \left( \Fxisca{m} - \Vgi{m} \, \fnc{S} \right) 
    - \, \W^{\top} \left(\Cij{1}{m} \frac{\partial \W}{\partial \xi_m} \right)\right] \mr{d}\xi_2 \, \mr{d}\xi_3.
\end{equation}

Now, we identify wall conditions that control the boundary contribution in \eqref{eq:continuous_entropy_estimate_1wall_moving}. In particular, an entropy-conservative moving wall should make the ALE-inviscid wall contribution vanish.
\begin{theorem}
\label{th:continuous_noslip}
The ALE no-slip boundary conditions 
\begin{equation*}
    \Ui = \Vi, \qquad i=1,2,3,
\end{equation*}
bound the ALE-inviscid flux contribution to the time derivative of the entropy in equation \eqref{eq:continuous_entropy_estimate_1wall_moving}. The weaker moving no-penetration condition is sufficient for the inviscid contribution.
\end{theorem}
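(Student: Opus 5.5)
We focus on the inviscid/ALE part of the wall integrand in \eqref{eq:continuous_entropy_estimate_1wall_moving}, namely
\begin{equation*}
\sum_{m=1}^{3}\J\frac{\partial \xi_1}{\partial x_m}\left(\fnc{F}_{x_m}-\Vm\fnc{S}\right),
\end{equation*}
and rewrite it in terms of the relative normal velocity. The metric terms $\J\,\partial\xi_1/\partial x_m$ are, up to the positive face area factor, the components of the (inward/outward) unit normal $n_m$ on the face $\xi_1=0$. We set $\fnc{N}=\sum_m \J\frac{\partial \xi_1}{\partial x_m}$-weighted normal. With the entropy flux $\fnc{F}_{x_m}=\fnc{S}\,\fnc{U}_m$ (since $\fnc{F}_{x_m}=-\rho\fnc{U}_m s$ and $\fnc{S}=-\rho s$), the integrand becomes
\begin{equation*}
\fnc{S}\sum_{m=1}^{3}\J\frac{\partial \xi_1}{\partial x_m}\left(\fnc{U}_m-\Vm\right),
\end{equation*}
which is the entropy transported by the normal component of the velocity relative to the wall.

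The main step is then immediate. Under the no-slip condition $\Ui=\Vi$, every component of $\fnc{U}_m-\Vm$ vanishes, so the integrand is identically zero. Under the weaker moving no-penetration condition $\sum_m n_m(\fnc{U}_m-\Vm)=0$, only the normal relative velocity vanishes, and this is exactly the combination appearing above. Hence this condition already kills the ALE-inviscid contribution, which proves the second sentence of the theorem. The inviscid contribution is therefore zero, and in particular bounded.

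To present the inviscid part in Godunov form, as in Theorem \ref{th:Godunov}, we also verify the equivalent identity $\fnc{F}_{x_m}-\Vm\fnc{S}=\W\Tr(\Fxm-\Vm\Q)-(\fnc{\varPsi}_{x_m}-\Vm\fnc{\varPhi})$. Here $\fnc{\varPsi}_{x_m}-\Vm\fnc{\varPhi}=\rho R(\fnc{U}_m-\Vm)$ by \eqref{eq:PotentialPotentialFlux}. Moreover $\W\Tr(\Fxm-\Vm\Q)$ contracted with the normal reduces, using \eqref{eq:EntropyVariables}, to the pressure-work term $\fnc{P}\,\U\cdot n/\fnc{T}$ plus terms proportional to the relative normal velocity. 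On the wall this pressure term equals $\rho R\,\Vg\cdot n$, which cancels against the potential contribution. Consistency of the two forms is the only place where care is needed.

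The main obstacle is bookkeeping of the metric terms and of the pressure-work term $\W\Tr\Fxm$. This contains $\fnc{P}\fnc{U}_m/\fnc{T}$ rather than the relative velocity, so it does not vanish on its own. It cancels only when combined with $\fnc{\varPsi}$ and the grid-motion term $-\Vm\W\Tr\Q$.

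Finally, we remark that the viscous boundary term $\W\Tr\Cij{1}{m}\partial\W/\partial\xi_m$ is not addressed by this theorem and is left for the subsequent heat-flux condition.
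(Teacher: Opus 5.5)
Your argument is correct and coincides with the paper's proof: both use $\fnc{F}_{x_m}=-\rho\,\fnc{U}_m s$ and $\fnc{S}=-\rho s$ to reduce the wall integrand to $-\rho s\sum_{m}\J\frac{\partial\xi_1}{\partial x_m}(\fnc{U}_m-\Vm)$, which vanishes under no-slip and, because only the normal relative velocity enters, already under moving no-penetration. Your Godunov-form paragraph is not needed, and its bookkeeping is slightly off: in fact $\W\Tr(\Fxm-\Vm\Q)=\rho(R-s)(\fnc{U}_m-\Vm)$ identically, so the $\rho R$ terms cancel against $\fnc{\varPsi}_{x_m}-\Vm\fnc{\varPhi}$ pointwise for any state, without any appeal to the wall condition.
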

\begin{proof}
Substituting the entropy--entropy flux relation \eqref{eq:GodFpotential} into \eqref{eq:continuous_entropy_estimate_1wall_moving} yields
\begin{equation*}
    \begin{aligned}
        \fnc{F}_{\xm} &= \W\Tr \Fxm - \fnc{\varPsi}_{\xm} = -\rho \,\fnc{U}_m s + \rho R \,\fnc{U}_m - \rho R \,\fnc{U}_m = -\rho \,\fnc{U}_m s \\
        \fnc{S} &= - \rho s.
    \end{aligned}
\end{equation*}
Hence,
\begin{equation*}
    \frac{\mr{d}}{\mr{d}t} \int_{\Omega} \J \fnc{S} \, \mr{d}\xi_1 \, \mr{d}\xi_2 \, \mr{d}\xi_3 =  -DT
    \:+\: \int_{\xi_1=0} \sum\limits_{m=1}^3 - \J \frac{\partial \xi_1}{\partial x_m} \rho \left( \fnc{U}_m - \Vgi{m} \right) \, s 
    - \, \W^{\top} \left(\Cij{1}{m} \frac{\partial \W}{\partial \xi_m} \right) \mr{d}\xi_2 \, \mr{d}\xi_3.
\end{equation*}
When the no-slip boundary conditions $\fnc{U}_m = \Vgi{m}$, for $m=1,2,3$, are imposed, the entropy flux associated with the ALE inviscid contribution vanishes:
\[
\fnc{F}_{\xm} - \Vm \, S = - \rho \, \left( \fnc{U}_m - \Vm \right) s = 0.
\]
\end{proof}

The no-slip condition is stronger than what is required by the ALE-inviscid entropy contribution. The inviscid wall contribution vanishes if
\begin{equation*}
    \sum\limits_{m=1}^{3}
    \J\frac{\partial \xi_1}{\partial x_m}
    \left( \fnc{U}_m-\Vgi{m} \right) = 0
    \qquad \text{on} \quad \xi_1=0.
\end{equation*}
This is the moving no-penetration condition written on the reference face.

\begin{theorem}\label{th:continous_viscous_bound}
The viscous wall contribution is controlled by prescribing the heat entropy flow through the wall. With the viscous flux convention in \eqref{eq:Fv} and an outward physical unit normal $\bm n$,
\begin{equation*}
    \W^{\top} \ensuremath{\bm{\fnc{F}}^{(V)}} \cdot\bm n
    = -\frac{\kappa}{\fnc{T}} \frac{\partial \fnc{T}}{\partial n},
\end{equation*}
where
\begin{equation*}
    \frac{\partial \fnc{T}}{\partial n}
    = \nabla \fnc{T}\cdot\bm n.
\end{equation*}
In other words, if one defines the prescribed heat entropy flow by
\begin{equation*}
    \mathtt{g}(t)
    = -\W^{\top} \ensuremath{\bm{\fnc{F}}^{(V)}} \cdot\bm n,
\end{equation*}
then
\begin{equation*}
    \mathtt{g}(t)
    = \frac{\kappa}{\fnc{T}} \frac{\partial \fnc{T}}{\partial n}.
\end{equation*}
This boundary datum controls the viscous contribution to the entropy balance \eqref{eq:continuous_entropy_estimate_1wall_moving}.
\end{theorem}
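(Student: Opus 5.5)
The argument is a direct contraction of the viscous flux with the entropy variables, evaluated on the wall where the ALE no-slip condition of Theorem~\ref{th:continuous_noslip} holds. I will write the physical viscous flux normal to the wall as $\bm{\fnc{F}}^{(V)}\cdot\bm n=\sum_{m}\Fxmv n_m$ and use \eqref{eq:Fv} and \eqref{eq:EntropyVariables} componentwise. The density component of the viscous flux vanishes, so the first entry of $\W$ does not contribute. The momentum components give $\sum_{i,m}\frac{\fnc{U}_i}{\fnc{T}}\tau_{im}n_m$. The energy component gives $-\frac{1}{\fnc{T}}\bigl(\sum_{i,m}\tau_{im}\fnc{U}_i n_m+\kappa\,\partial_{x_m}\fnc{T}\, n_m\bigr)$.

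Adding these, the work terms $\frac{1}{\fnc{T}}\sum_{i,m}\fnc{U}_i\tau_{im}n_m$ cancel identically. This is the key algebraic step, and it holds pointwise, independently of the boundary condition. What remains is $-\frac{\kappa}{\fnc{T}}\nabla\fnc{T}\cdot\bm n$, which is the claimed identity. The definition of $\mathtt{g}(t)$ then follows by a sign change.

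Next I will connect this to the reference-face term in \eqref{eq:continuous_entropy_estimate_1wall_moving}. Using \eqref{eq:mapped_viscous_matrix}, the quantity $\sum_n\W^{\top}\Chatij{1}{n}\partial_{\xi_n}\W$ equals $\W^{\top}\sum_m \J\frac{\partial\xi_1}{\partial x_m}\Fxmv$. On the face $\xi_1=0$ the outward physical normal is $\bm n=-\nabla\xi_1/|\nabla\xi_1|$. The contravariant flux is therefore $-\J|\nabla\xi_1|\,\bm{\fnc{F}}^{(V)}\cdot\bm n$, where $\J|\nabla\xi_1|$ is the surface-area scaling. Substituting, the viscous wall integrand becomes $-\J|\nabla\xi_1|\,\mathtt{g}(t)$ up to the sign convention of \eqref{eq:continuous_entropy_estimate_1wall_moving}. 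This boundary term is entirely determined by the prescribed datum $\mathtt{g}$. For an adiabatic wall, $\mathtt{g}=0$ and the term vanishes. In general it is bounded by the data, and together with Theorem~\ref{th:continuous_noslip} and $DT\ge0$ this gives the entropy estimate.

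The main obstacle is bookkeeping rather than analysis. The signs and orientation must be tracked consistently: the paper's flux convention for $\Fxmv$, the outward normal on $\xi_1=0$ versus the $-$ sign in \eqref{eq:continuous_entropy_estimate_1wall_moving}, and the metric factor linking the reference face measure to $\mr{d}\Gamma$. I would fix these first by checking the one-dimensional case with $\xi_1=x_1$ before writing the general mapped version.
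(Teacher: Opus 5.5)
Your proof is correct and more self-contained than the paper's, which gives no computation and simply defers to the static-wall theorems it cites. Your direct contraction of \eqref{eq:Fv} with \eqref{eq:EntropyVariables} supplies what that citation leaves implicit. The work terms $\frac{1}{\fnc{T}}\sum_{i,m}\fnc{U}_i\tau_{im}n_m$ cancel identically for any velocity field. Hence $\W^{\top}\bm{\fnc{F}}^{(V)}\cdot\bm n=-\frac{\kappa}{\fnc{T}}\frac{\partial\fnc{T}}{\partial n}$ is a pointwise identity that never uses $\fnc{U}_i=0$ at the wall. That is exactly what allows a static-wall result to be reused for a moving wall, where $\fnc{U}_i=\Vi\neq 0$; your opening appeal to the no-slip condition is therefore superfluous, as you note yourself. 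Your second step also makes the theorem's last sentence concrete, which the paper does not do. Read the viscous term of \eqref{eq:continuous_entropy_estimate_1wall_moving} as $\W^{\top}\sum_n\Chatij{1}{n}\partial_{\xi_n}\W$; the unhatted $\Cij{1}{m}$ printed there must be the mapped matrix of \eqref{eq:mapped_viscous_matrix}. With $\nabla\xi_1=-|\nabla\xi_1|\,\bm n$ on $\xi_1=0$, the viscous part of the integrand is exactly $-\J|\nabla\xi_1|\,\mathtt{g}$, where $\J|\nabla\xi_1|>0$ is the area factor when $\J>0$. So you can drop the hedge ``up to the sign convention''. The sign already accounts for the orientation built into \eqref{eq:continuous_entropy_estimate_1wall_moving}, and it is physically correct: heat entering the fluid ($\partial\fnc{T}/\partial n>0$) lowers the mathematical entropy $-\rho s$.
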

\begin{proof}
    See Theorem 3.2 in \cite{parsani2015SStabilitySolidWallBC} or Theorem 2.3 in \cite{dalcin2019WallBC}.
\end{proof}

\section{\textbf{Semi-discrete equations}}
\label{sec:semi-discrete}

This section introduces the summation-by-parts operators used in the semi-discrete ALE compressible Navier--Stokes discretization \eqref{eq:SemidiscreteALENavierStokes}. Alternative derivations based on nodal discontinuous Galerkin formulations can be found, for example, in \cite{krais2020SplitFormALE,schnucke2020EntropyStableALE}. Here, we follow the diagonal-norm SBP framework and the associated flux-differencing formulation of \cite{carpenter2014SSDC,parsani2015SStabilitySolidWallBC,carpenter2016SStabilitySSDC,fernandez2020SStabilityPRefinement,yamaleev2019EntropyStableALE}.

At each time $\tau$, the physical domain $\Omega(\tau)$ is partitioned into non-overlapping hexahedral elements $\Omega_{\kappa}(\tau)$. Each physical element is mapped to the fixed reference element $\widehat{\Omega} = [-1,1]^{3}$, with computational coordinates $\boldsymbol{\xi}=(\xi_{1},\xi_{2},\xi_{3})^{T}$. The nodal distribution consists of $N$ Legendre--Gauss--Lobatto points in each computational direction, giving $N^{3}$ volume nodes per element. The corresponding polynomial degree is $p=N-1$.

\subsection{SBP operators}
Let $\mathbf{D}_{N}$ denote the one-dimensional differentiation matrix. A diagonal-norm SBP operator satisfies
\begin{equation*}
    \mathbf{D}_{N} = \mathbf{P}_{N}^{-1}\mathbf{Q}_{N},
    \qquad \mathbf{Q}_{N} + \mathbf{Q}_{N}^{T} = \mathbf{B}_{N},
\end{equation*}
where $\mathbf{P}_{N}$ is a diagonal positive-definite quadrature matrix and
\begin{equation*}
    \mathbf{B}_{N} = \mathbf{e}_{N}\mathbf{e}_{N}^{T} - \mathbf{e}_{1}\mathbf{e}_{1}^{T} = \operatorname{diag} \left( -1,0,\ldots,0,1 \right).
\end{equation*}
Here, $\mathbf{e}_{1}\mathbf{e}_{1}^{T}$ and $\mathbf{e}_{N}\mathbf{e}_{N}^{T}$ extract the left and right endpoints of the one-dimensional reference interval.
Tensor products construct the scalar three-dimensional volume quadrature matrix as
\begin{equation*}
    \widehat{\mathbf{P}} := \mathbf{P}_{N} \otimes \mathbf{P}_{N} \otimes \mathbf{P}_{N}.
\end{equation*}
The scalar differentiation matrices in the three computational directions are
\begin{equation*}
        \widehat{\mathbf{D}}_{\xi_{1}} := \mathbf{D}_{N} \otimes \mathbf{I}_{N} \otimes \mathbf{I}_{N}, \qquad
        \widehat{\mathbf{D}}_{\xi_{2}} := \mathbf{I}_{N} \otimes \mathbf{D}_{N} \otimes \mathbf{I}_{N}, \qquad
        \widehat{\mathbf{D}}_{\xi_{3}} := \mathbf{I}_{N} \otimes \mathbf{I}_{N} \otimes \mathbf{D}_{N}.
\end{equation*}
The corresponding multidimensional undivided-difference matrices are defined by
\begin{equation*}
    \widehat{\mathbf{Q}}_{\xi_l} := \widehat{\mathbf{P}}\, \widehat{\mathbf{D}}_{\xi_l}, \qquad l=1,2,3,
\end{equation*}
and satisfy
\begin{equation*}
    \widehat{\mathbf{Q}}_{\xi_l} + \widehat{\mathbf{Q}}_{\xi_l}^{T} = \widehat{\mathbf{B}}_{\xi_l}.
\end{equation*}
Explicitly,
\begin{equation*}
        \widehat{\mathbf{B}}_{\xi_{1}} = \mathbf{B}_{N} \otimes \mathbf{P}_{N} \otimes \mathbf{P}_{N}, \qquad
        \widehat{\mathbf{B}}_{\xi_{2}} = \mathbf{P}_{N} \otimes \mathbf{B}_{N} \otimes \mathbf{P}_{N}, \qquad
        \widehat{\mathbf{B}}_{\xi_{3}} = \mathbf{P}_{N} \otimes \mathbf{P}_{N} \otimes \mathbf{B}_{N}.
\end{equation*}

For the five-component compressible Navier--Stokes system, the scalar operators are extended componentwise through tensor product multiplication:
\begin{equation*}
    \begin{alignedat}{2}
        \mathbf{P}_{\kappa}
        &:= \widehat{\mathbf{P}} \otimes \mathbf{I}_{5}, &\qquad
        \mathbf{D}_{\xi_l}
        &:= \widehat{\mathbf{D}}_{\xi_l} \otimes \mathbf{I}_{5}, \\
        \mathbf{Q}_{\xi_l}
        &:= \widehat{\mathbf{Q}}_{\xi_l} \otimes \mathbf{I}_{5}, &\qquad
        \mathbf{B}_{\xi_l}
        &:= \widehat{\mathbf{B}}_{\xi_l} \otimes \mathbf{I}_{5},
        \qquad l=1,2,3.
    \end{alignedat}
\end{equation*}
The subscript $\kappa$ on the norm matrix is retained to indicate its association with element $\kappa$, although the reference element quadrature matrix is identical for all elements with the same polynomial degree.

Let $\widehat{\mathbf{R}}_{f}$ denote the scalar restriction operator from the volume nodes to the nodes on face $f$. For example, the restriction operators associated with the two faces orthogonal to $\xi_{1}$ are
\begin{equation*}
    \widehat{\mathbf{R}}_{\xi_{1}^{-}} := \mathbf{e}_{1}^{T} \otimes \mathbf{I}_{N} \otimes \mathbf{I}_{N}, \qquad
    \widehat{\mathbf{R}}_{\xi_{1}^{+}} := \mathbf{e}_{N}^{T} \otimes \mathbf{I}_{N} \otimes \mathbf{I}_{N}.
\end{equation*}
The restriction operators in the remaining coordinate directions are defined analogously. The scalar face norm is
\begin{equation*}
    \widehat{\mathbf{P}}_{f} := \mathbf{P}_{N} \otimes \mathbf{P}_{N},
\end{equation*}
while the corresponding system operators are
\begin{equation*}
    \mathbf{R}_{f} := \widehat{\mathbf{R}}_{f} \otimes \mathbf{I}_{5}, \qquad \mathbf{P}_{f} := \widehat{\mathbf{P}}_{f} \otimes \mathbf{I}_{5}.
\end{equation*}

The boundary matrix in direction $\xi_l$ can therefore be written in terms of the face restriction operators as
\begin{equation*}
    \mathbf{B}_{\xi_l} = \sum_{\substack{f\subset\partial\widehat{\Omega}\\l(f)=l}} n_{\xi_l,f}\, \mathbf{R}_{f}^{T} \mathbf{P}_{f} \mathbf{R}_{f}, \qquad n_{\xi_l,f}\in\{-1,+1\}.
\end{equation*}
Consequently, for arbitrary nodal vectors $\mathbf{u}$ and $\mathbf{v}$, the multidimensional SBP identity is
\begin{equation}
    \mathbf{v}^{T} \mathbf{P}_{\kappa} \mathbf{D}_{\xi_l}\mathbf{u} + \left( \mathbf{D}_{\xi_l}\mathbf{v} \right)^{T} \mathbf{P}_{\kappa}\mathbf{u} = \sum_{\substack{f\subset\partial\widehat{\Omega}\\l(f)=l}} n_{\xi_l,f} \left( \mathbf{R}_{f}\mathbf{v} \right)^{T} \mathbf{P}_{f} \left( \mathbf{R}_{f}\mathbf{u} \right).
    \label{eq:MultidimensionalDiscreteIntegrationByParts}
\end{equation}
Equation \eqref{eq:MultidimensionalDiscreteIntegrationByParts} is the discrete analog of multidimensional integration by parts and is the fundamental identity used in the semi-discrete entropy analysis.

Throughout the analysis, the $\widetilde{\;}$ symbol indicates that the quantity is evaluated in the reference element and contracted with metric coefficients.

Let $N_v=N^{3}$ and $N_f=N^{2}$ denote the number of volume and face nodes, for a single cell and face, respectively.
The state vector is defined node-wise as $\mathbf{q}_{\kappa}\in\mathbb{R}^{5N_v}$, while $\mathbf{J}_{\kappa}=\operatorname{diag}(\mathbf{j}_{\kappa})\otimes\mathbf{I}_{5}$ and $\mathbf{P}_{\kappa}=\widehat{\mathbf{P}}_{\kappa}\otimes\mathbf{I}_{5}$ are diagonal matrices in $\mathbb{R}^{5N_v\times 5N_v}$.
The symbol $\circ$ denotes the Hadamard product (i.e. the elementwise product between two size-compatible matrices).

For any scalar nodal quantity $a$, the arithmetic two-point average and the logarithmic average are defined respectively by
\begin{equation*}
    \avg{a}_{ij}:=\frac{a_i+a_j}{2}, \qquad
    \logavg{a}_{ij}:=\frac{a_i-a_j}{\log{a_i}-\log{a_j}},
\end{equation*}
both with the same definition applied componentwise to vector quantities.

For a face $f$, $\mathbf{R}_{f}\in\mathbb{R}^{5N_f\times5N_v}$ restricts volume data to the face, $\mathbf{P}_{f}\in\mathbb{R}^{5N_f\times5N_f}$ is the face quadrature matrix, and each penalty vector $\mathbf{g}_{f}^{(\cdot),q}\in\mathbb{R}^{5N_f}$ contains the corresponding interface or physical-boundary contribution.

\subsection{Semi-discrete compressible Navier--Stokes equations}
We can write the compressible Navier--Stokes equations in the ALE formulation for a single tensor-product cell $\kappa$ as:

\begin{equation}
    \begin{aligned}
        \frac{\mathrm{d}}{\mathrm{d}\tau} \left( \mathbf{J}_{\kappa}\mathbf{q}_{\kappa} \right) + \sum_{l=1}^{3} \left[ 2 \mathbf{D}_{\xi_l} \circ \widetilde{\mathbf{F}}_{l,\kappa}^{\,\mathrm{ALE,ec}} \,\mathbf{1} - \mathbf{D}_{\xi_l} \widetilde{\mathbf{f}}_{l,\kappa}^{\,\mathrm{V}} \right] &= \mathbf{P}_{\kappa}^{-1} \sum_{f\in\Gamma_{\kappa}^{\mathrm{I}}} \mathbf{R}_{f}^{T}\mathbf{P}_{f} \left[ \mathbf{g}_{f}^{(\mathrm{I,ALE}),q} + \mathbf{g}_{f}^{(\mathrm{I,V}),q} \right] \\
        &\quad+ \mathbf{P}_{\kappa}^{-1} \sum_{f\in\Gamma_{\kappa}^{\mathrm{B}}} \mathbf{R}_{f}^{T}\mathbf{P}_{f} \left[ \mathbf{g}_{f}^{(\mathrm{B,ALE}),q} + \mathbf{g}_{f}^{(\mathrm{B,V}),q} \right], \\
    \end{aligned}
    \label{eq:SemidiscreteALENavierStokes}
\end{equation}

where the discrete volume fluxes are defined as:

\begin{equation*}
    \left[ \widetilde{\mathbf{F}}_{l,\kappa}^{\,\mathrm{ALE,ec}} \right]_{ij} := \operatorname{diag} \left[ \sum_{m=1}^{3} \avg{J\frac{\partial\xi_l}{\partial x_m}}_{ij} \mathbf{F}_{x_m}^{\,\mathrm{ALE,ec}} \left( \mathbf{Q}_{\kappa,i}, \mathbf{Q}_{\kappa,j}; \avg{V_m}_{ij} \right) \right].
\end{equation*}

The ALE convective operator is written in nonlinear flux-differencing form,
\begin{equation*}
    2 \mathbf{D}_{\xi_l} \circ \widetilde{\mathbf{F}}_{l,\kappa}^{\,\mathrm{ALE,ec}} \mathbf{1},
\end{equation*}
where the block flux matrix is assembled from a symmetric and consistent two-point entropy-conservative ALE flux. The viscous divergence is approximated using the standard SBP derivative,
\begin{equation*}
    \mathbf{D}_{\xi_l} \widetilde{\mathbf{f}}_{l,\kappa}^{\,\mathrm{V}},
\end{equation*}
The differentiation operators $\mathbf{D}_{\xi_l}\in\mathbb{R}^{5N_v\times 5N_v}$ act componentwise, $\widetilde{\mathbf{f}}_{l,\kappa}^{\,\mathrm{V}}\in\mathbb{R}^{5N_v}$ contains the viscous flux evaluated at the volume nodes, and $\mathbf{1}\in\mathbb{R}^{5N_v}$ is the vector of ones. The nonlinear flux matrix $\widetilde{\mathbf{F}}_{l,\kappa}^{\,\mathrm{ALE,ec}}\in\mathbb{R}^{5N_v\times 5N_v}$ is composed of $5\times5$ diagonal blocks, with the $(i,j)$ block constructed from the two-point flux evaluated using the states at volume nodes $i$ and $j$.

Interface and physical boundary conditions are imposed weakly through SAT contributions of the form
\begin{equation*}
    \mathbf{P}_{\kappa}^{-1} \mathbf{R}_{f}^{T} \mathbf{P}_{f} \mathbf{g}_{f}.
\end{equation*}

where $\mathbf{g}_{f}$ is a surface term determined by either the neighboring cell or the boundary state as:

\begin{equation*}
    \mathbf{g}_{f}^{(\mathrm{I,ALE}),q} = \widetilde{\mathbf{f}}_{f,\kappa}^{\,\mathrm{ALE},-} - \widetilde{\mathbf{f}}_{f}^{\,\mathrm{ALE,ec}} \left( \mathbf{q}_{f}^{-}, \mathbf{q}_{f}^{+} \right)
\end{equation*}

\begin{equation}
    \mathbf{g}_{f}^{(\mathrm{B,ALE}),q} = \widetilde{\mathbf{f}}_{f,\kappa}^{\,\mathrm{ALE},-} - \widetilde{\mathbf{f}}_{f}^{\,\mathrm{ALE,bc}} \left( \mathbf{q}_{f}^{-}, \mathbf{q}_{f}^{B} \right)
    \label{eq:SATboundaryALE}
\end{equation}

here, we define appropriate numerical flux and entropy-conservative 2-point fluxes,

\begin{equation*}
    \widetilde{\mathbf{f}}_{f,\kappa}^{\,\mathrm{ALE},-} := \sum_{m=1}^{3} n_{\xi_{l(f)},f}\, \operatorname{diag} \left[ \mathbf{R}_{f} \left( J\frac{\partial\xi_{l(f)}}{\partial x_m} \right)_{\kappa} \right] \left[ \mathbf{F}_{x_m}\left(\mathbf{q}_{f}^{-}\right) - V_{m,f}\mathbf{q}_{f}^{-} \right]
\end{equation*}

\begin{equation}
    \widetilde{\mathbf{f}}_{f}^{\,\mathrm{ALE,ec}} \left( \mathbf{q}_{f}^{-}, \mathbf{q}_{f}^{+} \right) := \sum_{m=1}^{3} n_{\xi_{l(f)},f}\, \operatorname{diag} \left[ \mathbf{R}_{f} \left( J\frac{\partial\xi_{l(f)}}{\partial x_m} \right)_{\kappa} \right] \mathbf{F}_{x_m}^{\,\mathrm{ALE,ec}} \left( \mathbf{q}_{f}^{-}, \mathbf{q}_{f}^{+}; \avg{V_m}_{f} \right)
    \label{eq:SurfaceALE2pointFlux}
\end{equation}

where $l(f)$ denotes the computational direction orthogonal to face $f$, and $n_{\xi_{l(f)},f}\in\{-1,+1\}$ is the outward reference-space normal component, and the corresponding physical scaled normal is $n_{\xi_{l(f)},f}J\nabla_x\xi_{l(f)}$.

It has been proved that this formulation allows the volume contributions to telescope to surface terms under contraction with the entropy variables, thereby reducing the current entropy analysis to interface and boundary-point relations.
The linear SBP identity \eqref{eq:MultidimensionalDiscreteIntegrationByParts} is used to integrate the discrete viscous divergence by parts. It separates the viscous contribution into a surface flux and a non-negative volume dissipation term. The nonlinear ALE convective flux-differencing operator does not satisfy the linear identity directly; its entropy analysis instead relies on the nonlinear SBP property, the symmetry and consistency of the two-point flux, the Tadmor shuffle condition, and the discrete geometric conservation laws. Together, these properties reduce the contracted volume operators to face contributions that can be combined with the interface and boundary SATs. Yamaleev \cite{yamaleev2019EntropyStableALE} shows a detailed derivation of the equations by applying these properties.

The viscous contribution is discretized with a local discontinuous Galerkin (LDG) type of approach. This specific discretization strategy has been successfully applied in many previous works \cite{carpenter2014SSDC,carpenter2015SStableStaggered,parsani2015DiscontinuousInterface,yamaleev2019EntropyStableALE}.

\begin{equation}
    \boldsymbol{\Theta}_{\xi_l,\kappa} - \mathbf{D}_{\xi_l}\mathbf{w}_{\kappa} = \mathbf{P}_{\kappa}^{-1} \sum_{\substack{f\in\Gamma_{\kappa}^{\mathrm{I}}\\l(f)=l}} \mathbf{R}_{f}^{T}\mathbf{P}_{f} \mathbf{g}_{f}^{(\mathrm{I}),\Theta} + \mathbf{P}_{\kappa}^{-1} \sum_{\substack{f\in\Gamma_{\kappa}^{\mathrm{B}}\\l(f)=l}} \mathbf{R}_{f}^{T}\mathbf{P}_{f} \mathbf{g}_{f}^{(\mathrm{B}),\Theta}, \qquad l=1,2,3.
    \label{eq:ViscousLDG}
\end{equation}

\begin{equation}
    \widetilde{\mathbf{f}}_{l,\kappa}^{\,\mathrm{V}} := \sum_{n=1}^{3} \widehat{\mathbf{C}}_{ln,\kappa} \boldsymbol{\Theta}_{\xi_n,\kappa}
    \label{eq:DiscreteViscousFlux}
\end{equation}

\begin{equation*}
    \widehat{\mathbf{C}}_{ln,\kappa} := \mathbf{J}_{\kappa}^{-1} \sum_{m=1}^{3}\sum_{j=1}^{3} \operatorname{diag} \left[ \left( J\frac{\partial\xi_l}{\partial x_m} \right)_{\kappa} \right] \mathbf{C}_{mj,\kappa} \operatorname{diag} \left[ \left( J\frac{\partial\xi_n}{\partial x_j} \right)_{\kappa} \right]
\end{equation*}

The discrete counterpart of the geometric conservation laws applies to space and time in the following form. As previously stated, these relations are crucial to ensure water-tightness (i.e. freestream preservation) and entropy conservation for moving and deforming elements. The discrete space-GCL \eqref{eq:DiscreteSpaceGCL} is enforced by building the mapping with the same SBP operators that are used for computing derivatives. This is equivalent to the non-ALE implementation. On the other hand, the discrete time-GCL is enforced by advancing in time the equation for each deforming cell, together with the system of equations.

\begin{equation}
    \sum_{l=1}^{3} \mathbf{D}_{\xi_l} \left( J\frac{\partial\xi_l}{\partial x_m} \right)_{\kappa} = \mathbf{0}, \qquad m=1,2,3.
    \label{eq:DiscreteSpaceGCL}
\end{equation}

\begin{equation}
    \frac{\mathrm{d}}{\mathrm{d}\tau} \left( \mathbf{J}_{\kappa}\mathbf{1} \right) - 2\sum_{l=1}^{3} \left[ \mathbf{D}_{\xi_l} \circ \left( \left[ \sum_{m=1}^{3} \avg{J\frac{\partial\xi_l}{\partial x_m}}_{ij} \avg{V_m}_{ij} \right] \right) \right] \mathbf{1} = \mathbf{0}.
    \label{eq:DiscreteTimeGCL}
\end{equation}

A fundamental piece for achieving entropy conservation is an appropriate 2-point flux function, that is symmetric, consistent, and satisfies Tadmor's shuffle conditions for both the inviscid and the ALE fluxes.
Considering the Navier--Stokes equations, a possible choice for a 2-point flux function is the Chandrasekhar flux modified by the convective relative velocity due to grid motion. In general, any entropy-conservative 2-point flux that satisfies the aforementioned properties can be chosen with appropriate modifications to the convection part.
In the following, we will consider:

\begin{equation}
    \mathbf{F}_{x_m}^{\,\mathrm{ALE,ec}} = \mathbf{F}_{x_m}^{\,\mathrm{ec}} - \avg{V_m}\mathbf{U}^{*} =
    \begin{bmatrix}
        \logavg{\rho} \left( \avg{\Q_{m+1}} -\avg{V_m} \right) \\[0.4em]
        \logavg{\rho} \left( \avg{\Q_{m+1}} -\avg{V_m} \right) \avg{\Q_{2}} + \dfrac{\avg{\rho}}{2\avg{\beta}} \delta_{m,1} \\
        \logavg{\rho} \left( \avg{\Q_{m+1}} -\avg{V_m} \right) \avg{\Q_{3}} + \dfrac{\avg{\rho}}{2\avg{\beta}} \delta_{m,2} \\
        \logavg{\rho} \left( \avg{\Q_{m+1}} -\avg{V_m} \right) \avg{\Q_{4}} + \dfrac{\avg{\rho}}{2\avg{\beta}} \delta_{m,3} \\
        \logavg{\rho} \left( \avg{\Q_{m+1}} -\avg{V_m} \right) \left( \dfrac{1}{2(\gamma-1)\logavg{\beta}} + \overline{k} \right) + \dfrac{\avg{\rho}\avg{\Q_{m+1}}}{2\avg{\beta}}
    \end{bmatrix}
\label{eq:2pointFluxCH}
\end{equation}

where the kinetic energy is defined as

\begin{equation*}
    \overline{k} := \dfrac{1}{2} \sum_{r=1}^{3} \left(2\avg{\Q_{r+1}}^{2} - \avg{\Q_{r+1}^{2}}\right)
\end{equation*}


\subsection{Entropy-stable interface flux dissipation}

It has been demonstrated that an interface flux constructed using \eqref{eq:SurfaceALE2pointFlux} and adhering to the definition in \eqref{eq:2pointFluxCH} is entropy conservative. In the current discontinuous Galerkin (DG) formulation, adjacent elements are inherently discontinuous. Introducing interface dissipation is a common practice aimed at maintaining stability in regions with steep gradients. This ensures physically meaningful solutions, but relaxes entropy conservation to entropy stability.

Merriam~\cite{merriam1989RemarkableIdentity,merriam1989TowardsaRigorousApproachtoArtificialDissipation} proposed an approach that is consistent with the second law of thermodynamics. This result was later generalized by Barth~\cite{barth1999SymmetrizationofCLaws} and is known as the eigenvector scaling theorem.
In this context, we will apply a similar approach that incorporates the contribution of grid motion to the inviscid fluxes.

The entropy-stable numerical fluxes are defined as:
\begin{equation*}
    \mathbf{F}_{x_m}^{\,\mathrm{ALE,es}} \left( \mathbf{q}_{f}^{-}, \mathbf{q}_{f}^{+}; \avg{V_m}_{f} \right) = \mathbf{F}_{x_m}^{\,\mathrm{ALE,ec}} \left( \mathbf{q}_{f}^{-}, \mathbf{q}_{f}^{+}; \avg{V_m}_{f} \right) + \frac{1}{2} \mathbf{Y}_m \lvert \mathbf{\Lambda}_m \rvert \mathbf{Y}\Tr_m \left( \mathbf{w}_{f}^{-} - \mathbf{w}_{f}^{+} \right)
\end{equation*}
where $\mathbf{\Lambda}_m$ and $\mathbf{Y}_m$ are the diagonal matrix of eigenvalues and the matrix of the right eigenvectors, following these relations:
\begin{align*}
    \frac{\partial \Q}{\partial \W} &= \mathbf{Y}_m\mathbf{Y}\Tr_m , \\
    \frac{\partial \FxmALE}{\partial \Q} &= \mathbf{Y}_m \lvert \mathbf{\Lambda}_m \rvert \mathbf{Y}\Tr_m \qquad m=1,2,3.
\end{align*}
The right eigenvector matrix is the flux Jacobian of the inviscid fluxes, and it depends on the Roe-averaged states of conservative variables. It can be proved that the eigenvectors do not explicitly depend on the grid velocity contribution. The corresponding eigenvalues are modified with the grid velocity, giving an effective dissipation different than the conventional fixed grid equation.
\begin{equation*}
    \lvert \mathbf{\Lambda}_m \rvert = \operatorname{diag} \left( \lvert \avg{\U_m - \Vm} - c \rvert,\, \lvert \avg{\U_m - \Vm} \rvert,\, \lvert \avg{\U_m - \Vm} \rvert,\, \lvert \avg{\U_m - \Vm} \rvert,\, \lvert \avg{\U_m - \Vm} + c \rvert \right)
\end{equation*}
The proof of entropy stability for these interface operators can be found in \cite{krais2020SplitFormALE,schnucke2020EntropyStableALE,winters2017Upwind,yamaleev2019EntropyStableALE}.

\section{Entropy-conservative and entropy-stable moving-wall boundary conditions}
\label{sec:moving_wall_bc}

This section develops entropy-conservative and entropy-stable moving-wall boundary conditions, building on the operators and conventions established in the previous section. The entropy analysis adheres to the standard entropy-variable framework for conservative discretizations. Its extension to moving meshes follows some of the steps shown in the continuous analysis. However, additional important details are necessary to maintain the same entropy bounds at the semi-discrete level.
Krais, Schn\"ucke, and Yamaleev \cite{krais2020SplitFormALE,schnucke2020EntropyStableALE,yamaleev2019EntropyStableALE} established entropy conservation and stability for entropy-consistent ALE discretizations, including the treatment of non-periodic boundaries.
For the volume terms, the presence of mesh motion does not alter the fundamental structure of the entropy argument.
Provided that the discrete temporal geometric conservation law (GCL) and the discrete spatial metric identities are satisfied, contraction with the entropy variables, together with the SBP property and the entropy-conservative two-point ALE flux, reduces the discrete volume contribution to surface terms.
In particular, the additional terms generated by the grid velocity cancel through the discrete GCL, so that no spurious volume entropy production is introduced by the mesh motion. 
For brevity, we omit this standard volume derivation here; a symbolic verification of the complete volume calculation is provided in ~\ref{app:SymbolicVolumeContribution}.

Consequently, once the volume terms have been reduced and the interior-face contributions cancel, the entropy balance is governed solely by the physical boundary terms.
The focus of the present work is therefore the treatment of a moving solid wall.
In contrast to a stationary-wall analysis, the ALE boundary flux contains the normal grid velocity explicitly, and the impermeability condition must be imposed in terms of the velocity relative to the moving wall.
Importantly, the argument does not require the wall motion to be tangential: the wall, and hence the boundary grid, may possess a nonzero normal velocity.
We show that, when the moving-wall boundary conditions are imposed consistently, these additional ALE terms cancel in the entropy balance.
The viscous wall contribution retains the same entropy structure as in the corresponding stationary-grid analysis, since mesh motion enters through the ALE advective flux rather than the physical viscous flux.
Thus, the novelty of the proof lies in establishing that the moving-boundary contribution preserves the desired entropy balance even for general wall motion.

Let $\mathbf{S}_{\kappa}$ denote the vector of mathematical entropy evaluated at the nodes of element $\kappa$, and let $\mathbf{w}_{\kappa}$ denote the corresponding entropy variables.
Furthermore, let $\widehat{\mathbf{P}}_{\kappa}$ and $\widehat{\mathbf{P}}_{f}$ denote the scalar volume and surface SBP quadrature matrices, respectively, with their block extensions to the system of equations denoted by $\mathbf{P}_{\kappa}$ and $\mathbf{P}_{f}$.
Contracting \eqref{eq:SemidiscreteALENavierStokes} from the left with $\mathbf{w}_{\kappa}^{T}\mathbf{P}_{\kappa}$, using the auxiliary gradient equation \eqref{eq:ViscousLDG}, applying the SBP properties and invoking the discrete geometric conservation laws \eqref{eq:DiscreteSpaceGCL} and \eqref{eq:DiscreteTimeGCL} yields:

\begin{equation}
    \begin{aligned}
        \frac{\mathrm{d}}{\mathrm{d}\tau} \left( \mathbf{1}^{T} \widehat{\mathbf{P}}_{\kappa} \mathbf{J}_{\kappa} \mathbf{S}_{\kappa} \right) &+ \sum_{f\in\Gamma_{\kappa}^{\mathrm{I}}\cup\Gamma_{\kappa}^{\mathrm{B}}} \mathbf{1}_{f}^{T} \widehat{\mathbf{P}}_{f} \widetilde{\mathbf{F}}_{S,f,\kappa}^{\,\mathrm{ALE},-} - \sum_{f\in\Gamma_{\kappa}^{\mathrm{I}}\cup\Gamma_{\kappa}^{\mathrm{B}}} \left( \mathbf{w}_{f}^{-} \right)^{T} \mathbf{P}_{f} \widetilde{\mathbf{f}}_{f,\kappa}^{\,\mathrm{V},-} + \mathbf{DT}_{\kappa}\\
        &= \sum_{f\in\Gamma_{\kappa}^{\mathrm{I}}} \left( \mathbf{w}_{f}^{-} \right)^{T} \mathbf{P}_{f} \left[ \mathbf{g}_{f}^{(\mathrm{I,ALE}),q} + \mathbf{g}_{f}^{(\mathrm{I,V}),q} \right] \\
        &\quad+ \sum_{f\in\Gamma_{\kappa}^{\mathrm{B}}} \left( \mathbf{w}_{f}^{-} \right)^{T} \mathbf{P}_{f} \left[ \mathbf{g}_{f}^{(\mathrm{B,ALE}),q} + \mathbf{g}_{f}^{(\mathrm{B,V}),q} \right] \\
        &\quad + \sum_{f\in\Gamma_{\kappa}^{\mathrm{I}}} \left( \mathbf{R}_{f} \widetilde{\mathbf{f}}_{l(f),\kappa}^{\,\mathrm{V}} \right)^{T}
        \mathbf{P}_{f} \mathbf{g}_{f}^{(\mathrm{I}),\Theta}
        + \sum_{f\in\Gamma_{\kappa}^{\mathrm{B}}} \left(\mathbf{R}_{f} \widetilde{\mathbf{f}}_{l(f),\kappa}^{\,\mathrm{V}} \right)^{T}
        \mathbf{P}_{f} \mathbf{g}_{f}^{(\mathrm{B}),\Theta}.
    \end{aligned}
    \label{eq:ALEentropyBalanceCell}
\end{equation}

The outward-oriented ALE entropy flux is

\begin{equation*}
    \widetilde{\mathbf{F}}_{S,f,\kappa}^{\,\mathrm{ALE},-} := \sum_{m=1}^{3} n_{\xi_{l(f)},f}\, \operatorname{diag} \left[ \mathbf{R}_{f} \left( J\frac{\partial\xi_{l(f)}}{\partial x_m} \right)_{\kappa} \right] \left[ \mathbf{F}_{S,x_m} \left( \mathbf{q}_{f}^{-} \right) - V_{m,f}\mathbf{S}_{f}^{-} \right],
\end{equation*}

where $\mathbf{F}_{S,x_m}$ is the physical entropy flux in the $x_m$ direction. The discrete viscous dissipation is

\begin{equation*}
    \mathbf{DT}_{\kappa} := \sum_{l=1}^{3} \sum_{n=1}^{3} \boldsymbol{\Theta}_{\xi_l,\kappa}^{T} \mathbf{P}_{\kappa} \widehat{\mathbf{C}}_{ln,\kappa} \boldsymbol{\Theta}_{\xi_n,\kappa}
    \geq 0,
\end{equation*}

where $\widehat{\mathbf{C}}_{ln,\kappa}$ are the transformed viscous coefficient matrices.
Assume now that all interface contributions cancel or are entropy stable, and consider a single physical wall face $f_{w}$, like for the proof at the continuous level. Equation \eqref{eq:ALEentropyBalanceCell} reduces to

\begin{equation}
    \begin{aligned}
        \frac{\mathrm{d}}{\mathrm{d}\tau} \left( \mathbf{1}^{T} \widehat{\mathbf{P}}_{\kappa} \mathbf{J}_{\kappa} \mathbf{S}_{\kappa} \right) &+ \mathbf{1}_{f_w}^{T} \widehat{\mathbf{P}}_{f_w} \widetilde{\mathbf{F}}_{S,f_w,\kappa}^{\,\mathrm{ALE},-} - \left( \mathbf{w}_{f_w}^{-} \right)^{T} \mathbf{P}_{f_w} \widetilde{\mathbf{f}}_{f_w,\kappa}^{\,\mathrm{V},-} + \mathbf{DT}_{\kappa} \\
        &= \left( \mathbf{w}_{f_w}^{-} \right)^{T} \mathbf{P}_{f_w} \left[ \mathbf{g}_{f_w}^{(\mathrm{B,ALE}),q} + \mathbf{g}_{f_w}^{(\mathrm{B,V}),q} \right] \\
        & \quad + \left( \mathbf{R}_{f_w} \widetilde{\mathbf{f}}_{l(f_w),\kappa}^{\,\mathrm{V}} \right)^{T}
        \mathbf{P}_{f_w} \mathbf{g}_{f_w}^{(\mathrm{B}),\Theta}.
    \end{aligned}
    \label{eq:ALEentropyBalanceWall}
\end{equation}

At an individual wall quadrature point, the corresponding boundary contribution is

\begin{equation*}
    \begin{aligned}
        \mathcal{B}_{f_w}^{\,\mathrm{ALE}} :&= \left( \mathbf{w}_{f_w}^{-} \right)^{T} \left[ \mathbf{g}_{f_w}^{(\mathrm{B,ALE}),q} + \mathbf{g}_{f_w}^{(\mathrm{B,V}),q} + \widetilde{\mathbf{f}}_{f_w,\kappa}^{\,\mathrm{V},-} \right] - \widetilde{F}_{S,f_w,\kappa}^{\,\mathrm{ALE},-} 
        + \left( \mathbf{R}_{f_w} \widetilde{\mathbf{f}}_{l(f_w),\kappa}^{\,\mathrm{V}} \right)^{T}
        \mathbf{g}_{f_w}^{(\mathrm{B}),\Theta} \\
        &= \widetilde{\Psi}_{f_w,\kappa}^{\,\mathrm{ALE},-} - \left( \mathbf{w}_{f_w}^{-} \right)^{T} \widetilde{\mathbf{f}}_{f_w}^{\,\mathrm{ALE,bc}} + \mathcal{B}_{f_w}^{\,\mathrm{V,static}},
    \end{aligned}
\end{equation*}

where we applied the definition of entropy flux, plugged in the boundary SAT \eqref{eq:SATboundaryALE}, and defined

\begin{equation*}
    \mathcal{B}_{f_w}^{\,\mathrm{V,static}} := \left( \mathbf{w}_{f_w}^{-} \right)^{T} \left[ \mathbf{g}_{f_w}^{(\mathrm{B,V}),q} + \widetilde{\mathbf{f}}_{f_w,\kappa}^{\,\mathrm{V},-} \right]
    + \left( \mathbf{R}_{f_w} \widetilde{\mathbf{f}}_{l(f_w),\kappa}^{\,\mathrm{V}} \right)^{T}
    \mathbf{g}_{f_w}^{(\mathrm{B}),\Theta},
\end{equation*}

and the outward-oriented ALE entropy potential is defined as

\begin{equation*}
    \widetilde{\Psi}_{f_w,\kappa}^{\,\mathrm{ALE},-} :=
    \sum_{m=1}^{3} n_{\xi_{l(f_w)},f_w} \left( J\frac{\partial\xi_{l(f_w)}}{\partial x_m} \right)_{\kappa} 
    \left( \psi_{x_m}^{-} - V_{m,f_w}\phi^{-} \right),
\end{equation*}

with potential and potential-flux defined from their continuous counterpart \eqref{eq:PotentialPotentialFlux}.

As a result, the moving-wall analysis separates into a moving grid contribution and a viscous contribution:

\begin{equation*}
    \mathcal{B}_{f_w}^{\,\mathrm{ALE}} = \underbrace{ \widetilde{\Psi}_{f_w,\kappa}^{\,\mathrm{ALE},-} - \left( \mathbf{w}_{f_w}^{-} \right)^{T} \widetilde{\mathbf{f}}_{f_w}^{\,\mathrm{ALE,bc}} }_{\text{ALE inviscid wall contribution}} + \underbrace{ \mathcal{B}_{f_w}^{\,\mathrm{V,static}} }_{\text{viscous wall contribution}}.
\end{equation*}

The boundary contribution is ultimately determined by the value of the ghost state. We define the conservative ghost state $\mathbf{q}_{f}^{B}$, with respect to the primitive variables as:

\begin{equation}
    \mathbf{q}_{f}^{B} =
    \begin{bmatrix}
        \rho_{f}^{B} \\
        \rho_{f}^{B}\mathbf{u}_{f}^{B} \\
        \rho_{f}^{B}E_{f}^{B}
    \end{bmatrix},
    \qquad E_{f}^{B} = c_{v}T_{f}^{B} + \frac{1}{2} \left| \mathbf{u}_{f}^{B} \right|^{2} = \frac{RT_{f}^{B}}{\gamma-1} + \frac{1}{2} \left| \mathbf{u}_{f}^{B} \right|^{2}.
    \label{eq:GhostState}
\end{equation}

\begin{theorem}
    The penalty ALE ﬂux contribution in Eq. \eqref{eq:SemidiscreteALENavierStokes} is entropy conservative if the boundary state \eqref{eq:GhostState} is defined by the following primitive variables:
    \begin{equation}
        \rho_{f}^{B} = \rho_{f}^{-}, \qquad
        \mathbf{u}_{f}^{B} = \mathbf{u}_{f}^{-} - 2 \left[ \left( \mathbf{u}_{f}^{-} - \mathbf{u}_{w,f} \right) \cdot \mathbf{n}_{f} \right] \mathbf{n}_{f}, \qquad
        T_{f}^{B}= T_{f}^{-}.
        \label{eq:ALEmovingWallGhostState}
    \end{equation}
\end{theorem}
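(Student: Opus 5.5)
Starting from the wall balance, the ALE inviscid wall contribution at a single wall quadrature point is
\[
\widetilde{\Psi}_{f_w,\kappa}^{\,\mathrm{ALE},-} - \left(\mathbf{w}_{f_w}^{-}\right)^{T}\widetilde{\mathbf{f}}_{f_w}^{\,\mathrm{ALE,bc}},
\]
with $\widetilde{\mathbf{f}}_{f_w}^{\,\mathrm{ALE,bc}}$ taken as the entropy-conservative two-point flux \eqref{eq:SurfaceALE2pointFlux} evaluated at $(\mathbf{q}_f^{-},\mathbf{q}_f^{B})$. I will show that this quantity vanishes pointwise for the ghost state \eqref{eq:ALEmovingWallGhostState}. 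The wall grid velocity coincides with the wall velocity, so $V_{m,f}=u_{w,m,f}$ on both sides and $\avg{V_m}_f = u_{w,m,f}$. Write $\mathbf{N}=\sum_m n_{\xi_{l(f)},f}(J\partial\xi_{l(f)}/\partial x_m)\mathbf{e}_m = |\mathbf{N}|\,\mathbf{n}_f$ for the scaled outward normal, which is common to both states since the metrics are restricted from the interior. Then everything reduces to the normal relative velocity $\theta^{\pm}:=(\mathbf{u}^{\pm}-\mathbf{u}_w)\cdot\mathbf{n}_f$. By construction of the ghost state, $\theta^{B}=-\theta^{-}$, $\rho^B=\rho^-$, $T^B=T^-$ (hence $\beta^B=\beta^-$, $p^B=p^-$), and the tangential velocity components are equal.

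\textbf{Step 1: the potential term.} By \eqref{eq:PotentialPotentialFlux}, $\psi_{x_m}-V_m\phi=\rho R(\mathcal{U}_m-V_m)$, so $\widetilde{\Psi}^{\,\mathrm{ALE},-}=|\mathbf{N}|\rho^- R\,\theta^-$.

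\textbf{Step 2: the two-point flux.} Evaluate \eqref{eq:2pointFluxCH} contracted with $\mathbf{N}$. Since $\rho$ and $\beta$ are equal on both sides, $\logavg{\rho}=\avg{\rho}=\rho^-$ and $\logavg{\beta}=\avg{\beta}=\beta^-$. The mass-flux factor becomes $\rho^-\,\mathbf{N}\cdot(\avg{\mathbf{u}}-\mathbf{u}_w)=\rho^-|\mathbf{N}|\,\tfrac12(\theta^-+\theta^B)=0$. Hence every term proportional to the mass flux disappears, including the continuity component, the convective parts of momentum, and the first part of the energy component. The flux therefore reduces to a pressure part $\frac{\rho^-}{2\beta^-}\mathbf{N}=p^-\mathbf{N}$ in the momentum equations, together with the energy term $\frac{\rho^-}{2\beta^-}\,\mathbf{N}\cdot\avg{\mathbf{u}}=p^-\,\mathbf{N}\cdot\mathbf{u}_w$, using $\avg{\mathbf{u}}\cdot\mathbf{n}_f=\mathbf{u}_w\cdot\mathbf{n}_f$.

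\textbf{Step 3: contraction with the entropy variables.} Using \eqref{eq:EntropyVariables}, we have $\mathbf{w}^{-T}\widetilde{\mathbf{f}}^{\,\mathrm{bc}} = p^-\,\mathbf{N}\cdot\mathbf{u}^-/T^- - p^-\,\mathbf{N}\cdot\mathbf{u}_w/T^- = |\mathbf{N}|\,(p^-/T^-)\,\theta^- = |\mathbf{N}|\rho^-R\,\theta^-$, by the ideal-gas law. This cancels exactly with Step 1, so the ALE inviscid wall contribution is zero and the penalty ALE flux is entropy conservative. Notice that the condition $\mathbf{u}_w\cdot\mathbf{n}_f\neq 0$ is permitted throughout, which gives the general (non-tangential) wall motion.

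The main obstacle is the bookkeeping in Step 2. In particular, I must verify that the energy component of the Chandrasekhar ALE flux, in the form $\avg{\rho}\avg{u_m}/(2\avg{\beta})$ with the grid velocity entering only through the mass-flux factor, produces exactly $p\,\mathbf{N}\cdot\mathbf{u}_w$. I must also check that the grid velocity in \eqref{eq:SurfaceALE2pointFlux} is the averaged wall velocity, so that no residual $\avg{V_m}\,\mathbf{q}$ term survives. I would first check the flux component by component in the rotated frame aligned with $\mathbf{n}_f$, where the symmetry of the ghost state makes these cancellations transparent.
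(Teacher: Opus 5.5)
Your proposal is correct and follows the paper's proof essentially step for step. In both, the reflected relative normal velocity makes the averaged normal mass flux $\sum_m \mathcal{M}_{m,f}\left(\avg{u_m}_f - V_{m,f}\right)$ vanish, so the Chandrashekar ALE flux collapses to pressure terms $p_f^-\mathbf{N}$ in momentum and $p_f^-\,\mathbf{N}\cdot\mathbf{V}_f$ in energy, and its contraction with $\mathbf{w}_f^-$ reproduces the ALE entropy potential $\rho_f^- R\,\mathbf{N}\cdot(\mathbf{u}_f^- - \mathbf{V}_f)$. The bookkeeping you flag as the remaining obstacle is already carried out in your Step 2, matching the paper's explicit boundary-flux formula.
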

\begin{proof}
Consider one quadrature point on a moving-wall face $f$. Starting from Eq. \eqref{eq:ALEentropyBalanceWall}. Define the outward-oriented scaled physical normal components by
\begin{equation*}
    \mathcal{M}_{m,f} := n_{\xi_{l(f)},f} \mathbf{R}_{f} \left( J\frac{\partial\xi_{l(f)}}{\partial x_m} \right)_{\kappa}, \qquad m=1,2,3,
\end{equation*}
and define the surface Jacobian and the physical outward unit normal as
\begin{equation}
    \mathcal{J}_{f} := \left( \sum_{m=1}^{3}\mathcal{M}_{m,f}^{2} \right)^{1/2}, \qquad n_{m,f} := \frac{\mathcal{M}_{m,f}}{\mathcal{J}_{f}}.
    \label{eq:UnitFaceNormals}
\end{equation}

At the moving wall, the wall velocity is assumed to coincide with the mesh velocity,
\begin{equation*}
    \mathbf{u}_{w,f} = \mathbf{V}_{f}.
\end{equation*}

The manufactured state used in the ALE inviscid boundary flux is defined in primitive variables by reflecting the fluid velocity relative to the wall in the normal direction as \eqref{eq:ALEmovingWallGhostState}.

The corresponding conservative ghost state is
\begin{equation*}
    \mathbf{q}_{f}^{B} =
    \begin{bmatrix}
        \rho_{f}^{B} \\
        \rho_{f}^{B}\mathbf{u}_{f}^{B} \\
        \rho_{f}^{B}E_{f}^{B}
    \end{bmatrix},
    \qquad E_{f}^{B} = c_{v}T_{f}^{B} + \frac{1}{2} \left| \mathbf{u}_{f}^{B} \right|^{2} = \frac{RT_{f}^{B}}{\gamma-1} + \frac{1}{2} \left| \mathbf{u}_{f}^{B} \right|^{2}.
\end{equation*}

The average velocity associated with the numerical and ghost states is
\begin{equation*}
        \avg{\mathbf{u}}_{f} = \frac{1}{2} \left( \mathbf{u}_{f}^{-} + \mathbf{u}_{f}^{B} \right) = \mathbf{u}_{f}^{-} - \left[ \left( \mathbf{u}_{f}^{-} - \mathbf{u}_{w,f} \right) \cdot \mathbf{n}_{f} \right] \mathbf{n}_{f}.
\end{equation*}

Consequently, since $\mathbf{u}_{w,f}=\mathbf{V}_{f}$,
\begin{equation}
    \sum_{m=1}^{3} \mathcal{M}_{m,f} \left( \avg{u_m}_{f} - V_{m,f} \right) = \mathcal{J}_{f} \mathbf{n}_{f} \cdot \left( \avg{\mathbf{u}}_{f} - \mathbf{V}_{f} \right) = \mathcal{J}_{f} \mathbf{n}_{f} \cdot \left( \avg{\mathbf{u}}_{f} - \mathbf{u}_{w,f} \right) = 0.
    \label{eq:ZeroRelativeNormalAverage}
\end{equation}

For an ideal gas,
\begin{equation}
    \beta = \frac{\rho}{2p} = \frac{1}{2RT}.
\end{equation}

From the boundary state definition, it follows that
\begin{equation*}
    \beta_{f}^{B} = \beta_{f}^{-}.
\end{equation*}

Therefore, the averages appearing in the Chandrashekar flux reduce to
\begin{equation*}
    \logavg{\rho}_{f} = \rho_{f}^{-}, \qquad \logavg{\beta}_{f} = \beta_{f}^{-} = \frac{1}{2RT_{f}^{-}}, \qquad \frac{\avg{\rho}_{f}} {2\avg{\beta}_{f}} = \rho_{f}^{-}RT_{f}^{-}.
\end{equation*}

Using \eqref{eq:ZeroRelativeNormalAverage}, the metric-contracted ALE boundary flux becomes
\begin{equation}
    \widetilde{\mathbf{f}}_{f}^{\,\mathrm{ALE,bc}}
    := \sum_{m=1}^{3} \mathcal{M}_{m,f} \mathbf{F}_{x_m}^{\,\mathrm{ALE,ec}} \left( \mathbf{q}_{f}^{-}, \mathbf{q}_{f}^{B}; V_{m,f} \right) =
    \begin{bmatrix}
        0 \\
        \rho_{f}^{-}RT_{f}^{-}\mathcal{M}_{1,f} \\
        \rho_{f}^{-}RT_{f}^{-}\mathcal{M}_{2,f} \\
        \rho_{f}^{-}RT_{f}^{-}\mathcal{M}_{3,f} \\
        \rho_{f}^{-}RT_{f}^{-} \displaystyle\sum_{m=1}^{3} \mathcal{M}_{m,f}V_{m,f}
    \end{bmatrix}.
    \label{eq:ALEmovingWallNumericalFlux}
\end{equation}

Given the entropy variables definition from the continuous analysis \eqref{eq:EntropyVariables}, and the entropy potential, potential flux pair \eqref{eq:PotentialPotentialFlux}, contracting \eqref{eq:ALEmovingWallNumericalFlux} with the entropy variables of the interior state gives
\begin{equation}
    \begin{aligned}
        \left( \mathbf{w}_{f}^{-} \right)^{T} \widetilde{\mathbf{f}}_{f}^{\,\mathrm{ALE,bc}} &= \sum_{m=1}^{3} \frac{u_{m,f}^{-}}{T_{f}^{-}} \left( \rho_{f}^{-}RT_{f}^{-} \right) \mathcal{M}_{m,f} - \frac{1}{T_{f}^{-}} \left( \rho_{f}^{-}RT_{f}^{-} \right) \sum_{m=1}^{3} \mathcal{M}_{m,f}V_{m,f} \\
        &= \rho_{f}^{-} \sum_{m=1}^{3} \mathcal{M}_{m,f} \left( u_{m,f}^{-}-V_{m,f} \right) R.
    \end{aligned}
    \label{eq:EntropyContractionALEwallFlux}
\end{equation}

The outward-oriented ALE entropy potential evaluated at the interior state is
\begin{equation}
    \widetilde{\Psi}_{f}^{\,\mathrm{ALE},-} := \sum_{m=1}^{3} \mathcal{M}_{m,f} \left( \psi_{x_m,f}^{-} - V_{m,f}\phi_{f}^{-} \right) = \rho_{f}^{-} \sum_{m=1}^{3} \mathcal{M}_{m,f} \left( u_{m,f}^{-} - V_{m,f} \right) R.
    \label{eq:ALEwallEntropyPotential}
\end{equation}

It follows from \eqref{eq:EntropyContractionALEwallFlux} and \eqref{eq:ALEwallEntropyPotential} that
\begin{equation*}
    \left( \mathbf{w}_{f}^{-} \right)^{T} \widetilde{\mathbf{f}}_{f}^{\,\mathrm{ALE,bc}} = \widetilde{\Psi}_{f}^{\,\mathrm{ALE},-}.
\end{equation*}

Hence, the point-wise ALE inviscid wall contribution satisfies
\begin{equation*}
    \mathcal{B}_{f}^{\,\mathrm{ALE,I}} := \widetilde{\Psi}_{f}^{\,\mathrm{ALE},-} - \left( \mathbf{w}_{f}^{-} \right)^{T} \widetilde{\mathbf{f}}_{f}^{\,\mathrm{ALE,bc}} = 0.
\end{equation*}

Substituting this into Eq.\eqref{eq:ALEentropyBalanceWall} gives
\begin{equation*}
        \frac{\mathrm{d}}{\mathrm{d}\tau} \left( \mathbf{1}^{T} \widehat{\mathbf{P}}_{\kappa} \mathbf{J}_{\kappa} \mathbf{S}_{\kappa} \right) + \mathbf{DT}_{\kappa} = \mathcal{B}_{f_w}^{\,\mathrm{V,static}}.
\end{equation*}
\end{proof}

The inviscid contribution due to grid motion is entropy preserving. For the Navier--Stokes equations, an appropriate treatment of the viscous terms $\mathcal{B}_{f_w}^{\,\mathrm{V,static}}$, like the ones proposed by \cite{parsani2015SStabilitySolidWallBC,dalcin2019WallBC}, ensures that entropy is preserved. In the following we show that no major changes are needed to the entropy analysis. In fact, no contribution of the ALE terms affects viscous fluxes or dissipation.
The only difference from the static counterpart is that the proof needs to account for the relative velocity given the no-slip condition.
We consider again a boundary face $f$ normal to the reference direction $\xi_l$.
The interior primitive state and the manufactured viscous boundary state are defined as
\begin{equation*}
    \mathbf{v}_{f}^{-} = \left( \rho_f^{-}, \mathbf{u}_f^{-}, T_f^{-} \right)^{T},
    \qquad
    \mathbf{v}_{f}^{B,\mathrm{V}} = \left( \rho_f^{-}, 2\mathbf{V}_f-\mathbf{u}_f^{-}, T_f^{-} \right)^{T},
\end{equation*}
where the wall velocity is assumed to coincide with the grid velocity, $\mathbf{u}_{w,f}=\mathbf{V}_f$. 
The corresponding entropy variables are $\mathbf{w}_f^{-}=\mathbf{w}(\mathbf{v}_f^{-})$ and $\mathbf{w}_f^{B,\mathrm{V}}=\mathbf{w}(\mathbf{v}_f^{B,\mathrm{V}})$.
We define the discrete counterpart to the mapped viscous matrices defined in \eqref{eq:mapped_viscous_matrix}.
Let
\begin{equation}
    \mathbf{L}_f :=
    -\frac{\sigma_f}{2}
    \left[ \widehat{\mathbf{C}}_{ll} \left(\mathbf{v}_f^{-}\right) +
    \widehat{\mathbf{C}}_{ll} \left(\mathbf{v}_f^{B,\mathrm{V}}\right) \right],
    \qquad
    \sigma_f>0,
    \label{eq:MovingWallPenaltyMatrix}
\end{equation}
and define the additional viscous wall penalty by
\begin{equation}
    \mathbf{M}_{f}^{(\mathrm{B,V})} :=
    \mathbf{L}_f \left( \mathbf{w}_f^{-} - \mathbf{w}_f^{B,\mathrm{V}} \right).
    \label{eq:MovingWallViscousPenalty}
\end{equation}
Since $\widehat{\mathbf{C}}_{ll}$ is symmetric positive semi-definite, $\mathbf{L}_f$ is symmetric negative semi-definite.

\begin{theorem}
    Let the viscous wall SATs  $\mathbf{g}_{f}^{(\mathrm{B,V}),q}$  and  $\mathbf{g}_{f}^{(\mathrm{B}),\Theta}$  be constructed according to the entropy consistent wall treatment of \cite{dalcin2019WallBC}, where
    \begin{equation*}
        \mathbf{g}_{f}^{(\mathrm{B,V}),q} = \mathbf{g}_{f}^{(\mathrm{B,V}),q,\mathrm{ec}} + \mathbf{M}_{f}^{(\mathrm{B,V})} + \mathbf{L}_{f}^{(\mathrm{B,V})}.
    \end{equation*}
    Let
    \begin{equation*}
        g_f(\tau) := \kappa_f \frac{1}{T_f} \frac{\partial T}{\partial n}\bigg|_f    
    \end{equation*}
    denote the prescribed heat entropy flow at the wall.
    In the absence of the additional dissipative penalty, $\mathbf{M}_{f}^{(\mathrm{B,V})}=\mathbf{0}$, the viscous contribution to the point-wise entropy balance satisfies 
    \begin{equation*}
        \mathcal{B}_{f}^{\,\mathrm{V}} = \mathcal{J}_{f} g_f(\tau),
    \end{equation*}
    where $\mathcal{J}_{f}$ is the surface Jacobian. 
    Consequently, the viscous wall treatment is entropy-conservative for an adiabatic wall ($g_f(\tau)=0$) and entropy-stable for a prescribed, bounded heat-entropy flow.
\end{theorem}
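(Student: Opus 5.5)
The plan is to evaluate, at a single wall quadrature point, the viscous boundary term $\mathcal{B}_{f}^{\,\mathrm{V,static}}$ defined before the theorem, with $\mathbf{M}_{f}^{(\mathrm{B,V})}=\mathbf{0}$. We follow the structure of the stationary proof in \cite{dalcin2019WallBC} and track where the grid velocity enters. First we write $\mathbf{g}_{f}^{(\mathrm{B,V}),q,\mathrm{ec}}$ and $\mathbf{g}_{f}^{(\mathrm{B}),\Theta}$ explicitly in the form used there:
\begin{itemize}
\item the $\Theta$-penalty is $\mathbf{g}_{f}^{(\mathrm{B}),\Theta}=\tfrac{1}{2}n_{\xi_l,f}(\mathbf{w}_f^{B,\mathrm{V}}-\mathbf{w}_f^{-})$, built from the manufactured state $\mathbf{v}_f^{B,\mathrm{V}}$;
\item the $q$-penalty replaces the interior normal viscous flux $\widetilde{\mathbf{f}}_{f,\kappa}^{\,\mathrm{V},-}$ by a boundary viscous flux. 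Its momentum components are the interior ones, and its energy component is $\sum_i \tau_{in}V_i$ plus the prescribed heat flux, i.e.\ $\mathcal{J}_f\kappa_f\,\partial T/\partial n$.
\end{itemize}
Substituting these into $\mathcal{B}_{f}^{\,\mathrm{V,static}}$, the interior flux $\widetilde{\mathbf{f}}_{f,\kappa}^{\,\mathrm{V},-}$ cancels against the corresponding part of $\mathbf{g}^{(\mathrm{B,V}),q}$. What remains is the sum of two contractions: $(\mathbf{w}_f^-)^T$ against the boundary viscous flux, and $\frac12(\mathbf{R}_f\widetilde{\mathbf{f}}^{\mathrm V}_{l,\kappa})^T(\mathbf{w}_f^{B,\mathrm V}-\mathbf{w}_f^-)$.

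The key algebraic step uses \eqref{eq:EntropyVariables} with $\rho^B=\rho^-$, $T^B=T^-$ and $\mathbf{u}^B=2\mathbf{V}_f-\mathbf{u}^-$. This gives $\mathbf{w}_f^{B,\mathrm V}-\mathbf{w}_f^-=(0,\,2(\mathbf{V}_f-\mathbf{u}^-_f)/T_f,\,0)^T$, because the first component depends on $\mathbf{u}$ only through $|\mathbf{u}|^2$, and $|2\mathbf{V}-\mathbf{u}|^2\neq|\mathbf{u}|^2$ in general. So the first component must be checked rather than assumed to vanish. We expect that the mass row of the viscous flux is zero, so that component is multiplied by zero anyway. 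The $\Theta$ term therefore reduces to $\sum_i \tau_{in}(V_i-u_i^-)\mathcal{J}_f/T_f$.

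The contraction of $\mathbf{w}_f^-=(\cdot,\mathbf{u}^-/T,-1/T)$ with the boundary flux gives $\sum_i\tau_{in}u_i^-\mathcal{J}_f/T_f-\mathcal{J}_f(\sum_i\tau_{in}V_i+\kappa\,\partial_nT)/T_f$. Adding the two pieces, the work terms $\tau_{in}u_i^-$ and $\tau_{in}V_i$ cancel pairwise. This cancellation is exactly where the relative velocity enters, and it is why the no-slip datum must be $\mathbf{V}_f$ rather than $\mathbf{0}$. What is left is $\mathcal{J}_f\kappa_f T_f^{-1}\partial_nT=\mathcal{J}_f g_f(\tau)$, with the sign fixed by the outward-normal convention of Theorem~\ref{th:continous_viscous_bound}. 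The identification of the metric-contracted normal flux with $\mathcal{J}_f$ times the physical normal flux uses \eqref{eq:UnitFaceNormals} and \eqref{eq:DiscreteViscousFlux}.

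The main obstacle is bookkeeping: matching the sign and normalization conventions ($n_{\xi_l,f}$, the factor $\tfrac12$ in the $\Theta$-SAT, and $\mathcal{J}_f$) so that the interior-flux and work terms cancel exactly. If the cited SAT uses the full jump rather than half, we would adjust the energy row of the $q$-penalty accordingly. Finally, we would remark that adding $\mathbf{M}_f^{(\mathrm{B,V})}$ contributes $(\mathbf{w}_f^-)^T\mathbf{L}_f(\mathbf{w}_f^--\mathbf{w}_f^{B,\mathrm V})$. This term is only entropy-stable if it is nonpositive, and establishing that requires symmetrizing with the ghost contribution, so we treat it separately. The conclusions for $g_f=0$ and for bounded $g_f$ then follow by substitution into the reduced balance at the end of the preceding proof.
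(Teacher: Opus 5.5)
Your computation is structurally the right one, and it is the calculation behind the paper's proof. The paper itself only cites Theorem 3.2 of \cite{dalcin2019WallBC}. Its symbolic check in Appendix~\ref{app:SymbolicMovingWallBC} builds the entropy-conservative part as $-\tfrac12\bigl(\mathbf{f}^{\mathrm V}(\mathbf{v}_f^-,\Theta^-)-\mathbf{f}^{\mathrm V}(\mathbf{v}_f^{B,\mathrm V},\Theta^{B})\bigr)$. The manufactured gradient there reflects the density and temperature gradients and copies the velocity gradient. This reduces exactly to your replacement flux: momentum rows equal to the interior ones, energy row $\mathbf{V}_f\cdot\bm{\tau}\mathbf{n}$ (times $\mathcal{J}_f$), and no heat flux.

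The cancellation of the work terms against the $\Theta$-penalty is correct. You were also right that the first entry of the entropy-variable jump does not vanish. It equals $2\mathbf{V}_f\cdot(\mathbf{u}_f^--\mathbf{V}_f)/T_f$, not $0$ as in the vector you display, but the zero mass row of the viscous flux annihilates it.

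The step that fails is the final sign. Your prescribed energy entry is $\mathcal{J}_f\bigl(\sum_i\tau_{in}V_i+\kappa_f\,\partial T/\partial n\bigr)$. Contracting it with $w_5^-=-1/T_f$ and cancelling the work terms leaves $-\mathcal{J}_f\kappa_fT_f^{-1}\partial T/\partial n=-\mathcal{J}_fg_f$. Your own intermediate expression already shows this.

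The outward-normal convention of Theorem~\ref{th:continous_viscous_bound} does not flip the sign. That theorem states $\W^{\top}\bm{\fnc{F}}^{(V)}\cdot\bm n=-\frac{\kappa}{\fnc{T}}\frac{\partial\fnc{T}}{\partial n}$, which confirms the minus sign.

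To get $\mathcal{B}_f^{\mathrm V}=+\mathcal{J}_fg_f$ as stated, the heat-entropy part $\mathbf{L}_f^{(\mathrm{B,V})}$ must put $-\mathcal{J}_fT_fg_f$ in the energy row. This is the choice in the paper's symbolic check, where the source has energy entry $-T\,g(t)$ and the contracted viscous boundary term evaluates to $+g(t)$. Equivalently, the datum enters with the sign opposite to the physical heat-flux term $+\kappa\,\partial T/\partial n$ in \eqref{eq:Fv}.

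With that correction your argument goes through. As written, it proves $\mathcal{B}_f^{\mathrm V}=-\mathcal{J}_fg_f$. That still gives the qualitative conclusions (conservation for $g_f=0$, a bounded contribution for bounded $g_f$), but not the identity claimed.
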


\begin{proof}
    See Theorem 3.2 of \cite{dalcin2019WallBC}.
\end{proof}

\begin{theorem}
The additional viscous wall penalty $\mathbf{M}_{f}^{(\mathrm{B,V})}$ defined in \eqref{eq:MovingWallViscousPenalty} is entropy dissipative, i.e.,
\begin{equation*}
    \left( \mathbf{w}_f^{-} \right)^{T} \mathbf{M}_{f}^{(\mathrm{B,V})} \leq 0 .
\end{equation*}
\end{theorem}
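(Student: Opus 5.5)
The plan is to exploit two structural facts: the explicit form of the jump $\mathbf{w}_f^{-}-\mathbf{w}_f^{B,\mathrm{V}}$, and an algebraic property of the viscous matrices $\Cij{m}{j}$ evaluated at a state, applied to that same state's entropy-variable vector. Throughout, $\Delta\mathbf{w} := \mathbf{w}_f^{-}-\mathbf{w}_f^{B,\mathrm{V}}$.

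\textbf{Step 1 (the jump).} Since $\rho$ and $\fnc{T}$ coincide in $\mathbf{v}_f^{-}$ and $\mathbf{v}_f^{B,\mathrm{V}}$, the thermodynamic entropy $s$ is also equal. Using \eqref{eq:EntropyVariables}, the first component of $\W$ can be written as $c_{\fnc{P}}-\tfrac12|\U|^2/\fnc{T}-s$. Then a direct computation with $\mathbf{u}^{B}=2\mathbf{V}_f-\mathbf{u}^{-}$ gives
\begin{equation*}
\Delta\mathbf{w}=\frac{2}{\fnc{T}_f^{-}}\Bigl[-(\mathbf{u}_f^{-}-\mathbf{V}_f)\cdot\mathbf{V}_f,\;(\mathbf{u}_f^{-}-\mathbf{V}_f)^{\top},\;0\Bigr]^{\top}.
\end{equation*}
In particular, the fifth (energy) component of $\Delta\mathbf{w}$ vanishes.

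\textbf{Step 2 (key lemma).} For any admissible state $\mathbf{v}$ and every $l$, the vector $\widehat{\mathbf{C}}_{ll}(\mathbf{v})\,\mathbf{w}(\mathbf{v})$ has nonzero entries only in the energy component. To see this, recall that $\Cij{m}{j}\,\partial\W/\partial x_j$ reproduces the viscous flux \eqref{eq:Fv}. The first row and column of each $\Cij{m}{j}$ vanish, the stress part depends only on velocity gradients, and the heat-flux part depends only on the temperature gradient. Now interpret $\mathbf{w}(\mathbf{v})$ as a perturbation $\delta\W$ of the entropy variables at $\mathbf{v}$, so that $\delta(\U/\fnc{T})=\U/\fnc{T}$ and $\delta(-1/\fnc{T})=-1/\fnc{T}$. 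Since $\U=-(\W_{2:4})/\W_5$, the induced velocity perturbation is
\begin{equation*}
\delta\U=\fnc{T}\,\frac{\U}{\fnc{T}}+\frac{\U}{\fnc{T}}\Bigl(-\frac{1}{\fnc{T}}\Bigr)\fnc{T}^2=\mathbf{0}.
\end{equation*}
Hence the stress terms, including the $\tau_{im}\fnc{U}_i$ energy contribution, vanish, and only the conduction term in the energy row survives. The contravariant matrices $\widehat{\mathbf{C}}_{ll}$ of \eqref{eq:mapped_viscous_matrix} are metric-weighted sums of the $\Cij{m}{j}$, so they inherit this property. Combining this with the symmetry of $\widehat{\mathbf{C}}_{ll}$ and Step 1 yields
\begin{equation*}
\mathbf{w}(\mathbf{v})^{\top}\widehat{\mathbf{C}}_{ll}(\mathbf{v})\,\Delta\mathbf{w}=\bigl(\widehat{\mathbf{C}}_{ll}(\mathbf{v})\mathbf{w}(\mathbf{v})\bigr)^{\top}\Delta\mathbf{w}=0,
\qquad \mathbf{v}\in\{\mathbf{v}_f^{-},\mathbf{v}_f^{B,\mathrm{V}}\}.
\end{equation*}

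\textbf{Step 3 (assembly).} Expand \eqref{eq:MovingWallPenaltyMatrix}. The term with $\widehat{\mathbf{C}}_{ll}(\mathbf{v}_f^{-})$ contributes $\mathbf{w}_f^{-\top}\widehat{\mathbf{C}}_{ll}(\mathbf{v}_f^{-})\Delta\mathbf{w}=0$ by Step 2. For the ghost-state term, write $\mathbf{w}_f^{-}=\mathbf{w}_f^{B,\mathrm{V}}+\Delta\mathbf{w}$ and apply Step 2 at the ghost state. This gives
\begin{equation*}
\bigl(\mathbf{w}_f^{-}\bigr)^{\top}\mathbf{M}_f^{(\mathrm{B,V})}=-\frac{\sigma_f}{2}\,\Delta\mathbf{w}^{\top}\widehat{\mathbf{C}}_{ll}\bigl(\mathbf{v}_f^{B,\mathrm{V}}\bigr)\Delta\mathbf{w}\le 0,
\end{equation*}
by positive semi-definiteness of $\widehat{\mathbf{C}}_{ll}$ (which requires $\J>0$ and $\fnc{T}_f^{-}>0$) and $\sigma_f>0$.

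The main obstacle is Step 2. One must verify carefully, from the explicit entries of $\Cij{m}{j}$ given in the cited references, that the entropy-variable vector produces no velocity gradient and that the kinetic-energy coupling in the energy row does not spoil the cancellation. If this fails, the fallback is a brute-force symbolic evaluation of the quadratic form using the explicit $\Cij{m}{j}$, in the spirit of the appendix verification.
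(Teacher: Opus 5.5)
Your proof is correct, but it takes a different route from the paper.

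\textbf{Paper's route.} Both proofs start from the same jump \eqref{eq:MovingWallEntropyJump}. The paper then writes $\widehat{\mathbf{C}}_{ll}=\frac{\mathcal{J}_f^2}{J}\sum_{m,j}n_{m,f}\mathbf{C}_{mj}n_{j,f}$ and splits $\mathbf{r}_f$ in a local orthonormal frame. It contracts the explicit Navier--Stokes matrices directly, backed by the symbolic script in the appendix, to obtain the closed form $\frac{4\mu_f}{3T_f^{-}}(4r_n^2+3r_{t_1}^2+3r_{t_2}^2)$.

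\textbf{Your route.} You replace that computation with the structural lemma that $\widehat{\mathbf{C}}_{ll}(\mathbf{v})\mathbf{w}(\mathbf{v})$ has only an energy component. Your linearization ($\delta\mathbf{u}=\mathbf{0}$, $\delta T=-T$) is sound, and the step you flagged as the main obstacle does go through. With the explicit $\mathbf{C}_{11}$ listed in the appendix, $\mathbf{C}_{11}(\mathbf{v})\mathbf{w}(\mathbf{v})=(0,0,0,0,-\kappa T)^{\top}$: the kinetic-energy terms in the energy row cancel exactly, so no fallback is needed. Your decomposition also explains why the paper's contraction comes out as a sum of squares. The interior-state term vanishes, and the ghost-state term equals $\Delta\mathbf{w}^{\top}\widehat{\mathbf{C}}_{ll}(\mathbf{v}_f^{B,\mathrm{V}})\Delta\mathbf{w}$.

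\textbf{What your route buys.} The sign follows from positive semi-definiteness alone, with no frame decomposition or entry-level algebra. The argument also uses only $T_f^{B,\mathrm{V}}=T_f^{-}$ (the ghost density and velocity never matter, since the first row and column of $\mathbf{C}_{mj}$ vanish). So it covers any ghost state with matched temperature. It also covers any viscous model with symmetric positive semi-definite $\widehat{\mathbf{C}}_{ll}$ in which the stress depends only on velocity gradients and the heat flux only on the temperature gradient.

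\textbf{What the paper's route buys.} It gives the explicit rate $-\frac{2\sigma_f\mu_f}{3T_f^{-}}\frac{\mathcal{J}_f^2}{J}\bigl[3|\mathbf{r}_f|^2+(\mathbf{r}_f\cdot\mathbf{n}_f)^2\bigr]$. This shows strict dissipation whenever the relative no-slip condition is violated and $\mu_f>0$. You can recover this rate from your quadratic form. The momentum block of $\sum_{m,j}n_{m,f}\mathbf{C}_{mj}n_{j,f}$ is $T\mu\bigl(\mathbf{I}+\tfrac13\mathbf{n}_f\mathbf{n}_f^{\top}\bigr)$, and the energy component of $\Delta\mathbf{w}$ is zero.
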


\begin{proof}
    Let
    \begin{equation*}
        \mathbf{r}_f := \mathbf{u}_f^{-} - \mathbf{V}_f
    \end{equation*}
    denote the velocity of the fluid relative to the moving wall.
    The interior and manufactured boundary velocities can then be written as
    \begin{equation*}
        \mathbf{u}_f^{-} = \mathbf{V}_f + \mathbf{r}_f,
        \qquad
        \mathbf{u}_f^{B,\mathrm{V}} = \mathbf{V}_f - \mathbf{r}_f.
    \end{equation*}
    Since the manufactured state leaves the density and temperature unchanged,
    \begin{equation*}
        \rho_f^{B,\mathrm{V}} = \rho_f^{-},
        \qquad
        T_f^{B,\mathrm{V}} = _f^{-},
    \end{equation*}
    the jump in entropy variables is
    \begin{equation}
        \Delta \mathbf{w}_f := \mathbf{w}_f^{-} - \mathbf{w}_f^{B,\mathrm{V}}
        = \frac{2}{T_f^{-}}
        \begin{bmatrix}
            - \mathbf{V}_f \cdot \mathbf{r}_f \\
            \mathbf{r}_f \\
            0
        \end{bmatrix}.
    \label{eq:MovingWallEntropyJump}
    \end{equation}

    To relate the mapped viscous matric to the normal viscous operator, consider the surface Jacobian associated with the face and the physical unit normal \eqref{eq:UnitFaceNormals}.
    As a result, using \eqref{eq:mapped_viscous_matrix_metrics},
    \begin{equation}
        \widehat{\mathbf{C}}_{ll} = J^{-1} \sum_{m,j=1}^3 \mathcal{M}_l^m \mathbf{C}_{mj} \mathcal{M}_l^j
        = \frac{\mathcal{J}_f^2}{J} \sum_{m,j=1}^3 n_{m,f} \mathbf{C}_{mj} n_{j,f}.
        \label{eq:MappedNormalViscousMatrix}
    \end{equation}

    Introduce a local orthonormal basis $[\mathbf{n}_f, \mathbf{t}_{1,f}, \mathbf{t}_{2,f}]$ and decompose the relative velocity as
    \begin{equation*}
        \mathbf{r}_f = r_n \mathbf{n}_f + r_{t_1} \mathbf{t}_{1,f} + r_{t_2} \mathbf{t}_{2,f} 
    \end{equation*}
    For the compressible Navier--Stokes viscous matrices, direct contraction with \eqref{eq:MovingWallEntropyJump} gives
    \begin{equation*}
            \left( \mathbf{w}_f^{-} \right)\Tr
            \left[ \sum_{m,j=1}^3 n_{m,f} \mathbf{C}_{mj} \left(\mathbf{v}_f^{-}\right)n_{j,f}
        + \sum_{m,j=1}^3 n_{m,f}\mathbf{C}_{mj} \left(\mathbf{v}_f^{B,\mathrm{V}}\right)n_{j,f} \right] \Delta \mathbf{w}_f 
        = \frac{4 \mu_f}{3 T_f^{-}} \left( 4 r_n^2 + 3 r_{t_1}^2 + 3 r_{t_2}^2 \right)
    \end{equation*}
    Using \eqref{eq:MovingWallPenaltyMatrix}, \eqref{eq:MovingWallViscousPenalty}, and \eqref{eq:MappedNormalViscousMatrix}, we obtain
    \begin{align*}
        \left( \mathbf{w}_f^{-} \right)\Tr \mathbf{M}_{f}^{(\mathrm{B,V})}
        &= -\frac{\sigma_f}{2} \left( \mathbf{w}_f^{-} \right)\Tr
        \left[ \widehat{\mathbf{C}}_{ll} \left(\mathbf{v}_f^{-}\right)
        + \widehat{\mathbf{C}}_{ll} \left(\mathbf{v}_f^{B,\mathrm{V}}\right) \right]
        \Delta\mathbf{w}_f \\
        &= - \frac{2 \sigma_f \mu_f}{3 T_f^{-}} \frac{\mathcal{J}_f^2}{J}
        \left( 4 r_n^2 + 3 r_{t_1}^2 + 3 r_{t_2}^2 \right) \leq 0.
    \end{align*}

    Equivalently, since
    \begin{equation*}
        4r_n^2+3r_{t_1}^2+3r_{t_2}^2 = 3\left|\mathbf{r}_f\right|^2 + \left( \mathbf{r}_f\cdot\mathbf{n}_f \right)^2,
    \end{equation*}
    
    the entropy production can be written in coordinate-independent form as
    \begin{equation*}
        \left( \mathbf{w}_f^{-} \right)\Tr \mathbf{M}_{f}^{(\mathrm{B,V})}
        = - \frac{2\sigma_f\mu_f}{3 T_f^{-}} \frac{\mathcal{J}_f^2}{J}
        \left[ 3 \left| \mathbf{u}_f^{-}-\mathbf{V}_f \right|^2
        + \left( \left( \mathbf{u}_f^{-}-\mathbf{V}_f \right) \cdot\mathbf{n}_f \right)^2 \right]
        \leq 0 .
    \end{equation*}

    Thus, the additional viscous wall penalty remains entropy dissipative for an arbitrarily moving wall. No additional ALE contribution arises in the viscous term. The grid velocity appears only because the physical no-slip condition is imposed relative to the moving wall, $\mathbf{u}_{w,f} = \mathbf{V}_f$.

\end{proof}

\section{Numerical tests}
\label{sec:numerical-tests}
The following section covers numerical tests that verify the continuous and semi-discrete proofs for the entropy-conservative moving-wall boundary conditions. First, the ALE formulation is verified with a canonical convergence study on the isentropic vortex test case. A rotating pipe flow ensures convergence for a test with wall boundary conditions applied. Then, a laminar flow about a heaving-pitching airfoil is computed. A 3D turbulent airfoil in a 2-DOF dynamical system is investigated near instability conditions. Finally, a supersonic bluff body in unsteady rotation ensures robustness and scalability of the implementation over long integration times.

\subsection{Isentropic vortex}
The isentropic vortex is a well-established benchmark problem for the compressible Euler equations. The analytical solution is given by
\begin{align*}
    \mathcal{G} &= 1 - \left\{ \left[ x_1 - x_{1,0} - \mathcal{U}_{\infty} \cos{(\alpha)} \, t \right]^2 + \left[ x_2 - x_{2,0} - \mathcal{U}_{\infty} \sin{(\alpha)} \, t \right]^2 \right\}, \\
    \rho &= \mathcal{T}^{\,\frac{1}{\gamma - 1}}, \qquad \mathcal{T} = \left[ 1 - \epsilon_{\nu}^2 M_{\infty}^2 \frac{\gamma -1}{8 \pi^2} \text{exp}(\mathcal{G})\right], \\
    \mathcal{U}_1 &= \mathcal{U}_{\infty} \cos{(\alpha)} - \epsilon_{\nu} \frac{x_2 - x_{2,0} - \mathcal{U}_{\infty} \sin{(\alpha)} \, t \, \text{exp}(\frac{\mathcal{G}}{2})}{2 \pi}, \\
    \mathcal{U}_2 &= \mathcal{U}_{\infty} \sin{(\alpha)} - \epsilon_{\nu} \frac{x_1 - x_{1,0} - \mathcal{U}_{\infty} \cos{(\alpha)} \, t  \,\text{exp}(\frac{\mathcal{G}}{2})}{2 \pi}. \\
\end{align*}
We consider a square domain with periodic boundary conditions. The prescribed rigid motion is a combination of translations and rotations oscillating in time.
This test case ensures that the ALE scheme is correctly implemented and delivers accurate results for moving grid computations. Given that ALE fluxes are a rectification of convective fluxes, achieving entropy conservation and convergence is fundamental regardless of boundary conditions.
Multiple grid motions have been tested, in particular, the functions considered in this test case are:

\begin{align*}
    x(t) &= \cos\!\bigl(\beta(t)\bigr)(x_0-C_x) - \sin\!\bigl(\beta(t)\bigr)(y_0-C_y) + C_x + A_x\sin(\omega t), \\
    y(t) &= \sin\!\bigl(\beta(t)\bigr)(x_0-C_x) + \cos\!\bigl(\beta(t)\bigr)(y_0-C_y) + C_y + A_y\sin(\omega t+\phi),
\end{align*}
with
\begin{equation*}
    \beta(t)=\beta_0\sin(\omega t).
\end{equation*}

In the following, motion 1 is a rototraslation with $A_x=0.1$, $A_y=-0.1$, $\omega=20$, $\beta_0=\SI{0.1}{\radian}$, and $\phi=\qty[parse-numbers=false]{\frac{\pi}{6}}{\radian}$, whereas motion 2 is a simple rotation with the translation parameters $A_x=A_y=0$.
We perform two different tests: first, we consider the entropy-conservative fluxes without added dissipation through upwinding, and we make sure that entropy conservation is achieved.
Fig.~\ref{fig:isentropicvortex-dsdt} shows the evolution of the entropy rate for two different motions of the isentropic vortex. Entropy conservation is achieved up to machine precision as expected.
In addition, we perform convergence tests with the entropy-stable flux by adding upwind dissipation as described in the previous sections. Fig.~\ref{fig:isentropicvortex-convergence} shows the $L^2$ error norm for different refinements and increasing polynomial degree. The expected $p+1$ rate is achieved in all computations; for $p=5$, convergence plateaus very close to machine precision and to the time integration tolerance.

\begin{figure}[H]
    \centering
    \includegraphics{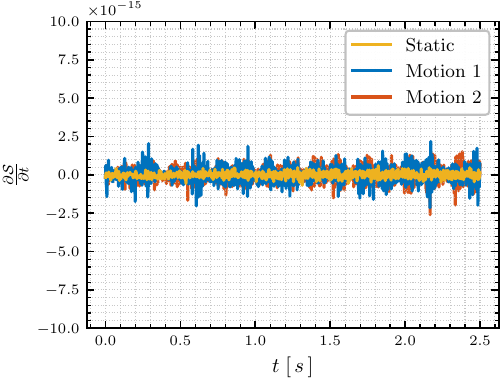}
    \caption{Entropy rate $\left( \frac{\partial \mathcal{S}}{\partial t}\right)$ as a function of time for a static grid and for two different motions.}
    \label{fig:isentropicvortex-dsdt}
\end{figure}

\begin{figure}[H]
    \centering
    \includegraphics{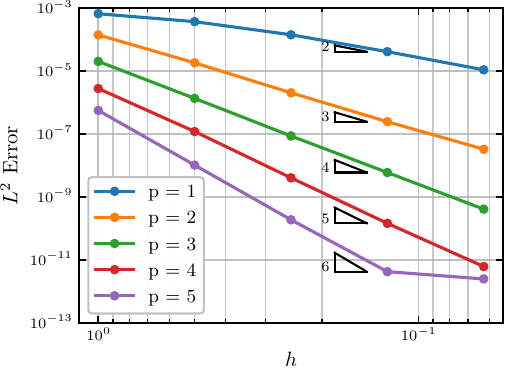}
    \caption{Convergence of the $L^2$ error of the isentropic vortex for different polynomial degrees.}
    \label{fig:isentropicvortex-convergence}
\end{figure}

\subsection{Annular Poiseuille flow}

We investigate the accuracy of the boundary conditions in a wall-bounded flow.
This test case has an analytical solution \cite{heaton2008HagenPoiseuille} for both the velocity distribution and the volume flux through an annular pipe. The expression for the axial velocity distribution, denoted as $\Uone$, as a function of the radial coordinate, $r$, is given by:
\begin{equation*}
	\ensuremath{\fnc{U}_{1}}(r) = \frac{G}{4\mu} \left[\left(R_{i}^2-r^2\right) + \left(R_{o}^2-R_{i}^2\right) \frac{ln(r/R_{i})}{ln(R_{o}/R_{i})} \right],  
\end{equation*}
where $G$ is the pressure gradient forcing term, and $R_{i}$ and $R_{o}$ are the inner and outer radii of the pipe, respectively. In this study, we set $R_{o} = 0.5$, $R_{o}/R_{i} = 4$ and $G/\mu=1$.
We chose this test case since it has an analytical solution that is not polynomial.
To approach the incompressible regime as closely as possible, we consider a Mach number of $\Mach = 10^{-5}$. Periodic boundary conditions are enforced in the axial direction, and a constant angular velocity of $2\pi$ is prescribed to the grid. The wall boundaries enforce the relative velocity according to the boundary condition.

We conduct a grid convergence study for polynomial degrees $p=2,3,4,5$, using a sequence of nested grids generated with rational Bezier basis functions, thereby preserving the exact geometric description of the pipe at each refinement level.

The error in the axial velocity profile is computed using discrete norms as follows:
\begin{equation*}
    \text{Discrete }L^{2}:\|\mathbf{u}\|_{L^{2}}^2 = \sum\limits_{\kappa=1}^{K} \mathbf{u}\Tr_{\kappa} \mathbf{P}_{\kappa} \mathbf{J}_{\kappa} \mathbf{u}\Tr_{\kappa}.
\end{equation*}

where $\mathbf{J}_{\kappa}$ is the metric Jacobian of the curvilinear transformation from physical
space to computational space of the $k$-th hexahedral element and $K$ is the
total number of non-overlapping hexahedral elements in the mesh.

The results of the grid convergence study are shown in Fig.~\ref{fig:pipe-convergence}, where we compare convergence rates for the static version of the computation and the one with a moving grid. It can be observed that the computed order of accuracy is very close to the formal value of approximately $(p+1)$, indicating that accuracy is preserved with the ALE formulation for non-periodic boundaries.

\begin{figure}[H]
    \centering
    \begin{subfigure}{0.475\linewidth}
        \centering
        \includegraphics[width=\linewidth]{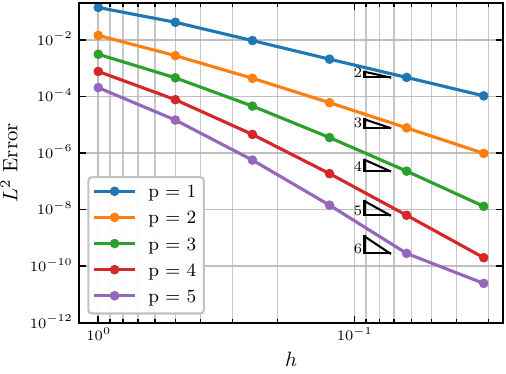}
        \caption{Static grid.}
        \label{fig:pipe-convergence-static}
    \end{subfigure}
    \hfill
    \begin{subfigure}{0.475\linewidth}
        \centering
        \includegraphics[width=\linewidth]{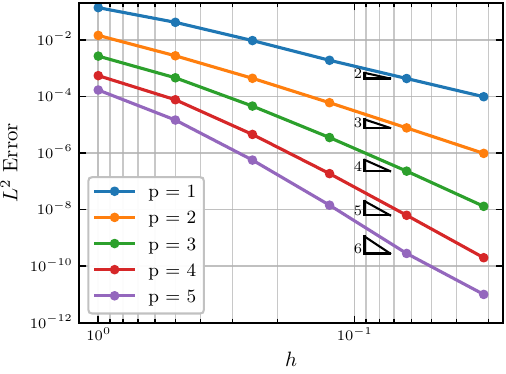}
        \caption{Moving grid.}
        \label{fig:pipe-convergence-rotating}
    \end{subfigure}
    \caption{Convergence of the $L^2$ error of the streamwise velocity for the Hagen--Poiseuille flow at different polynomial degrees. On the left, the default static grid solver, on the right, the ALE solver.}
    \label{fig:pipe-convergence}
\end{figure}

\subsection{Heaving-pitching NACA-0012}
This test case considers a NACA 0012 airfoil at $\Rey=1000$ and $\Mach=0.2$, undergoing combined heaving and pitching motions about the $c/3$ point.
The prescribed motion produces a rapid variation in the angle of attack together with a vertical translation of the airfoil.
The motion laws and computational reference data are taken from \cite{wukie2024HighFidelityCFDWorkshop}.
This configuration was considered by the Mesh Motion Working Group of the High-Fidelity CFD Workshop and therefore provides a well-established benchmark for assessing the present implementation against other high-order numerical methods.

The heaving displacement and pitching angle are prescribed by the following polynomial functions:
\begin{equation*}
    \begin{split}
        \Delta y(t) &= t^3 (8-3t)/16 \\
        \Delta \alpha(t) &= \frac{4 \pi}{9}(-t^6 + 6t^5 -12t^4 + 8t^3)
    \end{split}
\end{equation*}

\begin{figure}[H]
   \centering
   \begin{subfigure}{0.475\linewidth}
       \centering
       \includegraphics[width=\linewidth]{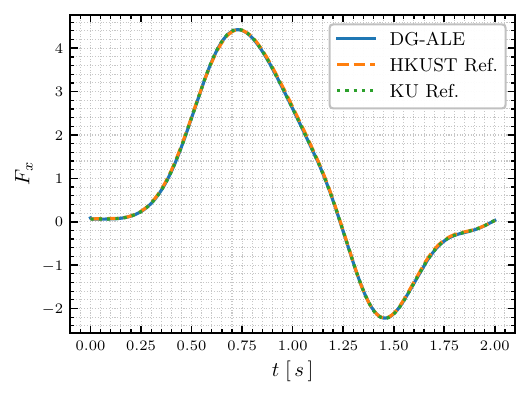}
       \caption{Time evolution of the drag force.}
       \label{fig:heavingpitching-forceX}
   \end{subfigure}
   \hfill
   \begin{subfigure}{0.475\linewidth}
       \centering
       \includegraphics[width=\linewidth]{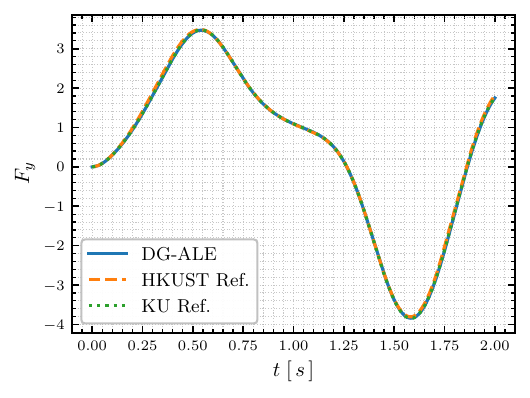}
       \caption{Time evolution of the lift force.}
       \label{fig:heavingpitching-forceY}
   \end{subfigure}
   \caption{Comparison of the aerodynamic forces in time with reference results from \cite{wukie2024HighFidelityCFDWorkshop}.}
   \label{fig:heavingpitching-force}
\end{figure}

\begin{figure}[H]
   \centering
   \begin{subfigure}{0.475\linewidth}
       \centering
       \includegraphics[width=\linewidth]{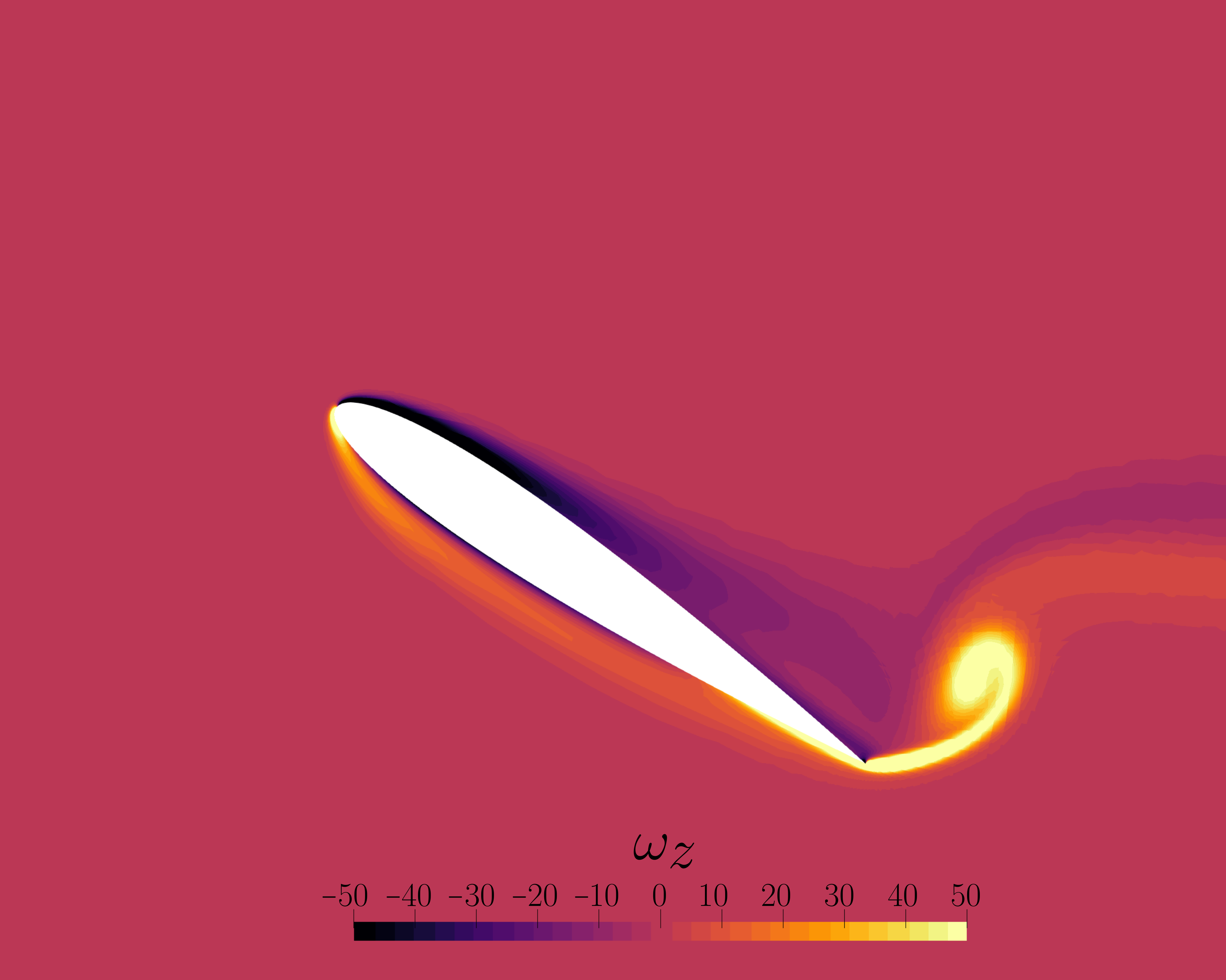}
       \caption{Vorticity plot at $t=\SI{0.5}{\second}$, compare to Fig. 3\cite{wukie2023HighFidelityCFDWorkshop}.}
       \label{fig:heavingpitching-vorticity-comp}
   \end{subfigure}
   \hfill
   \begin{subfigure}{0.475\linewidth}
       \centering
       \includegraphics[width=\linewidth]{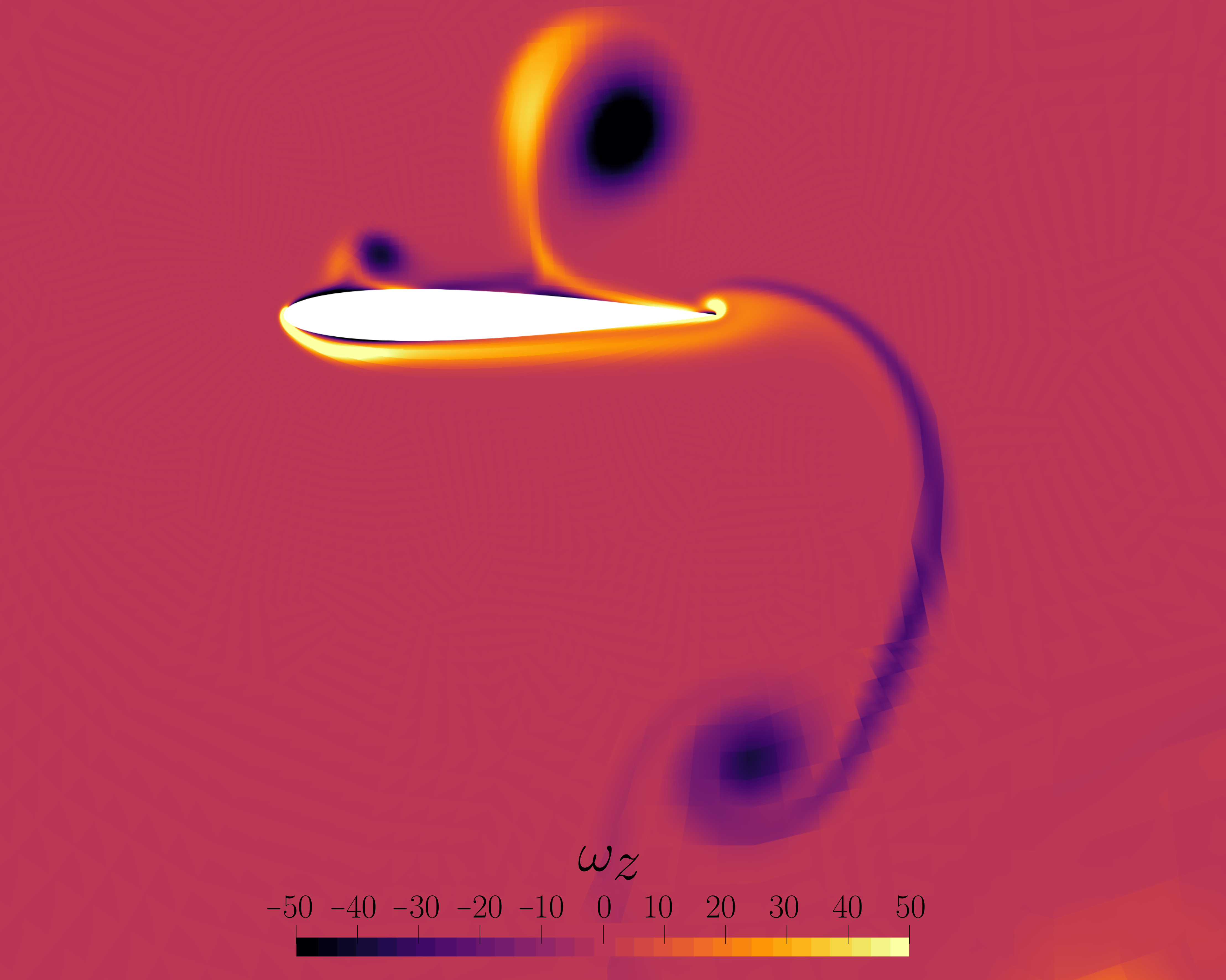}
       \caption{Vorticity plot at $t=\SI{2.0}{\second}$.}
       \label{fig:heavingpitching-vorticity-last}
   \end{subfigure}
   \caption{Comparison of the aerodynamic forces in time with reference results from \cite{wukie2024HighFidelityCFDWorkshop}.}
   \label{fig:heavingpitching-comp}
\end{figure}

The simulation is performed over two convective time units, corresponding to the time interval over which the prescribed polynomial motion is defined.
Validation is carried out by comparing both the aerodynamic forces and the resulting flow fields with the available reference solutions.
Fig.~\ref{fig:heavingpitching-forceX} and \ref{fig:heavingpitching-forceY} show the instantaneous drag and lift forces, respectively.
The predictions obtained with the present implementation exhibit excellent agreement with the reference results from the high-order solvers considered in the workshop, reproducing both the magnitude and temporal evolution of the aerodynamic loads.

A qualitative comparison is provided in Fig.~\ref{fig:heavingpitching-comp}, which shows the spanwise vorticity field at two representative instants of the motion.
The predicted flow structures and their temporal evolution are consistent with those reported in the reference computations.
Taken together, the agreement in both the aerodynamic loads and the resolved flow structures demonstrates the ability of the present mesh-motion implementation to accurately reproduce this strongly unsteady moving-boundary problem.

\subsection{Instability of a 2-DOF airfoil}
This test case considers the aeroelastic response of a two-degree-of-freedom NACA~0012 airfoil under transitional flow conditions.
The airfoil is allowed to translate in the vertical direction and to rotate about a fixed elastic axis located at $x_E = 0.4c$ from the leading edge.
The corresponding structural system is illustrated in Fig.~\ref{fig:2dof-illustration}.
The two structural degrees of freedom are the nondimensional vertical displacement $h=z/c$ and the pitch angle $\alpha$, with positive pitch defined as a counter-clockwise rotation.
The center of gravity is positioned downstream of the elastic axis by a distance $d=x_{EG}/c$, introducing an inertial coupling between the translational and rotational motions.

The governing equations are formulated directly in nondimensional form using the chord $c$, free-stream velocity $U_\infty$, and fluid density $\rho_\infty$ as reference quantities.
Time is therefore defined as $t^*=tU_\infty/c$, while the aerodynamic force and moment are normalized according to

\begin{equation*}
    F_y^* = \frac{F_y}{\rho_\infty U_\infty^2 c B},
    \qquad
    M_z^* = \frac{M_z}{\rho_\infty U_\infty^2 c^2 B},
\end{equation*}

where $B$ denotes the span represented by the aerodynamic loads. For clarity, the superscript $(\,\cdot\,)^*$ is omitted in the following.

For finite rotations, the coupled structural equations are written as

\begin{equation}
    \label{eq:2dof-dynamics}
    \begin{cases}
        m \ddot{h} &+ m d \cos(\alpha)\,\ddot{\alpha} - m d \sin(\alpha)\,\dot{\alpha}^{\,2} + b_h \dot{h} + k_h h = F_y, \\
        I_{\alpha}\ddot{\alpha} &+ m d \cos(\alpha)\,\ddot{h} + b_{\alpha}\dot{\alpha} + k_{\alpha}\alpha + k_{\alpha,3}\alpha^3 = M_z .
    \end{cases}
\end{equation}

Here, $m$ is the nondimensional structural mass, $I_{\alpha}$ is the mass moment of inertia about the elastic axis, $b_h$ and $b_{\alpha}$ are the translational and torsional damping coefficients, and $k_h$ and $k_{\alpha}$ are the corresponding linear stiffness coefficients.
The terms proportional to $m d \cos(\alpha)$ provide the inertial coupling between heave and pitch, whereas the term $-m d\sin(\alpha)\dot{\alpha}^{\,2}$ accounts for the finite-rotation centrifugal contribution associated with the offset center of gravity.
A cubic torsional stiffness $k_{\alpha,3}\alpha^3$ is additionally introduced. 
This term is weak for small pitch angles and therefore preserves the approximately linear small-amplitude dynamics, while providing a hardening restoring moment at large angles and preventing an unbounded rotational response.

The nondimensional structural parameters employed in the present test are summarized in Table~\ref{tab:2dof-parameters}.

\begin{figure}[H]
    \centering
    \includegraphics[width=0.5\linewidth]{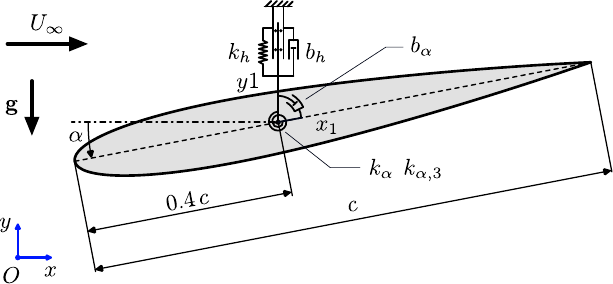}
    \caption{Illustration of the dynamical system considered.}
    \label{fig:2dof-illustration}
\end{figure}

\begin{table}[H]
    \centering
    \caption{Nondimensional structural parameters for the 2-DOF aeroelastic test case.}
    \label{tab:2dof-parameters}
    \begin{tabular}{c c c}
        \hline
        Parameter & Symbol & Value \\
        \hline
        Mass & $m$ & $12.0$ \\
        Pitch inertia & $I_{\alpha}$ & $0.35$ \\
        COG offset & $d=x_{EG}/c$ & $0.07$ \\
        Heave damping & $b_h$ & $0.066$ \\
        Heave stiffness & $k_h$ & $3.63$ \\
        Pitch damping & $b_{\alpha}$ & $\num{1.575e-3}$ \\
        Pitch stiffness & $k_{\alpha}$ & $0.196875$ \\
        Cubic pitch stiffness & $k_{\alpha,3}$ & $0.50$ \\
        \hline
    \end{tabular}
\end{table}

The coupled computation is initialized from a previously developed flow field obtained with the airfoil at a pitch angle of $\alpha=-5^\circ$, whereas the initial structural configuration is prescribed at $\alpha_0=2^\circ$ with zero heave and zero structural velocity.
Consequently, the initial fluid and structural states are intentionally not in equilibrium. The resulting difference in effective incidence, $\Delta\alpha=7^\circ$, generates a strong transient aerodynamic perturbation at the beginning of the coupled simulation.
The flow field subsequently adjusts to the new airfoil configuration while simultaneously exciting the heave--pitch structural system.

This initialization should not be interpreted as a prescribed physical gust.
Rather, it represents an impulsive perturbation of the coupled system, introduced to trigger the large-amplitude aeroelastic response.
Since the objective of the present test is to assess the robustness of the numerical method under large structural displacements and rotations, rather than to reproduce a specific experimental transient, the initial adjustment is considered part of the numerical stress test.

The fluid and structural solvers are coupled in a partitioned manner.
At the beginning of each time step, the aerodynamic force $F_y$ and moment $M_z$ are obtained by integrating the pressure and viscous stresses over the airfoil surface.
These loads are subsequently supplied to the structural solver, and Eq.~\eqref{eq:2dof-dynamics} is advanced in time using an implicit Newmark scheme.
The structural state at the beginning and end of the time step is then used to determine the motion at the intermediate fluid Runge--Kutta stages.
The grid undergoes a rigid-body translation and rotation; at this point, the ALE formulation as described by the paper is advanced in time with the explicit time stepper.

The flow conditions are defined by $\Rey_c = \num{4e4}$, $\Mach = 0.2$.
The purpose of this configuration is not to reproduce a particular experimental flutter boundary or aeroelastic benchmark quantitatively.
Instead, it is designed as a large-displacement fluid--structure interaction stress test.
The relatively low structural damping and the offset between the elastic axis and center of gravity promote a strongly coupled heave--pitch response, while the nonlinear torsional stiffness limits the growth of the pitch amplitude.
Consequently, the airfoil can undergo sustained large-amplitude oscillations involving large rotations, strongly separated flow, vortex shedding, and substantial mesh motion.

The test therefore targets the robustness of the complete coupled implementation, including aerodynamic load transfer, structural time integration, finite rigid-body motion, ALE grid velocities, and mesh quality under large displacements. Particular attention is given to the ability of the solver to maintain a stable and physically consistent solution when the airfoil experiences high instantaneous angles of attack and the surrounding flow becomes strongly separated.

Fig.~\ref{fig:2dof-time} shows the temporal evolution of the structural degrees of freedom. After the initial induced perturbation, the coupled system rapidly develops a large-amplitude aeroelastic response, with the pitch angle spanning approximately $\alpha\in[\SI{-60}{\degree},\SI{60}{\degree}]$ and the nondimensional heave displacement reaching values of approximately $h/c\in[-0.2,0.2]$.

During the initial transient, both the rotational and translational amplitudes grow rapidly as energy is transferred from the unsteady flow to the structural system. The rotational response is eventually bounded by the nonlinear hardening spring, whose cubic contribution becomes dominant at large values of $\alpha$ and provides an increasingly strong restoring moment. The resulting motion therefore remains confined to a finite range of large pitch angles rather than developing into an unbounded rotational divergence.

\begin{figure}[H]
   \centering
   \begin{subfigure}{0.475\linewidth}
       \centering
       \includegraphics[width=\linewidth]{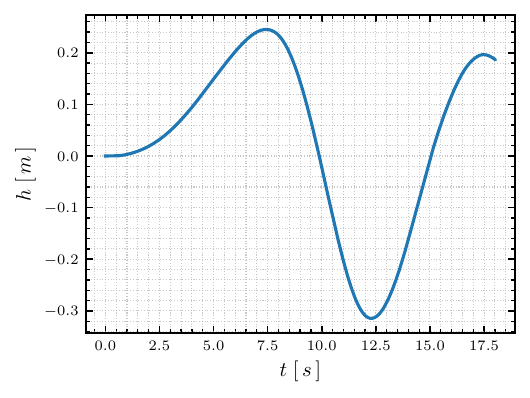}
       \caption{Vertical displacement $h$.}
       \label{fig:2dof-heave}
   \end{subfigure}
   \hfill
   \begin{subfigure}{0.475\linewidth}
       \centering
       \includegraphics[width=\linewidth]{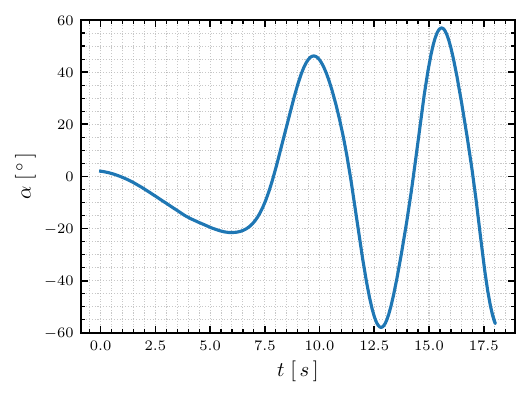}
       \caption{Angle of attack $\alpha$.}
       \label{fig:2dof-alpha}
   \end{subfigure}
   \caption{Time evolution of the free parameters driving the FSI system.}
   \label{fig:2dof-time}
\end{figure}

Such large-amplitude motion produces substantial variations in the instantaneous aerodynamic loading. In particular, the rapidly changing effective angle of attack promotes repeated separation and reattachment of the flow, accompanied by the formation and shedding of large coherent vortical structures. Consequently, pronounced fluctuations are observed in both the lift and drag coefficients. Their time histories are reported in Fig.~\ref{fig:2dof-loads}, where the strong correlation between the structural motion and the aerodynamic response is evident. The lift coefficient is closely related to the pitch angle, whereas the drag coefficient presents higher frequency content, due to the unsteadiness of vortices detaching from the surface.

Representative stages of the oscillation cycle are shown in Fig.~\ref{fig:2dof-QCrit} using the $Q$-criterion. The instantaneous flow field is characterized by extensive separation, strong vortex formation, and highly unsteady wake dynamics. These snapshots illustrate the severe aerodynamic conditions generated by the large structural excursions and highlight the ability of the coupled ALE--FSI formulation to remain stable in the presence of large translations, rotations, and strongly separated flow.

\begin{figure}[H]
   \centering
   \begin{subfigure}{0.475\linewidth}
       \centering
       \includegraphics[width=\linewidth]{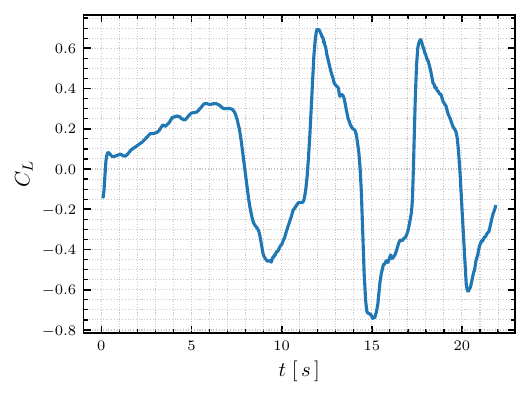}
       \caption{Lift coefficient $C_L$.}
       \label{fig:2dof-CL}
   \end{subfigure}
   \hfill
   \begin{subfigure}{0.475\linewidth}
       \centering
       \includegraphics[width=\linewidth]{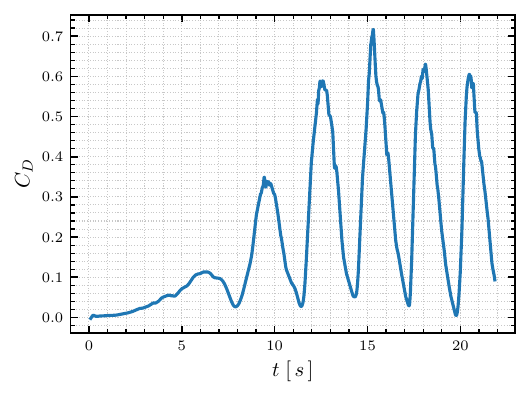}
       \caption{Drag coefficient $C_D$.}
       \label{fig:2dof-CD}
   \end{subfigure}
   \caption{Time evolution of the aerodynamic loads on the airfoil.}
   \label{fig:2dof-loads}
\end{figure}

\begin{figure}[H]
   \centering
   \begin{subfigure}{0.475\linewidth}
       \centering
       \includegraphics[width=\linewidth]{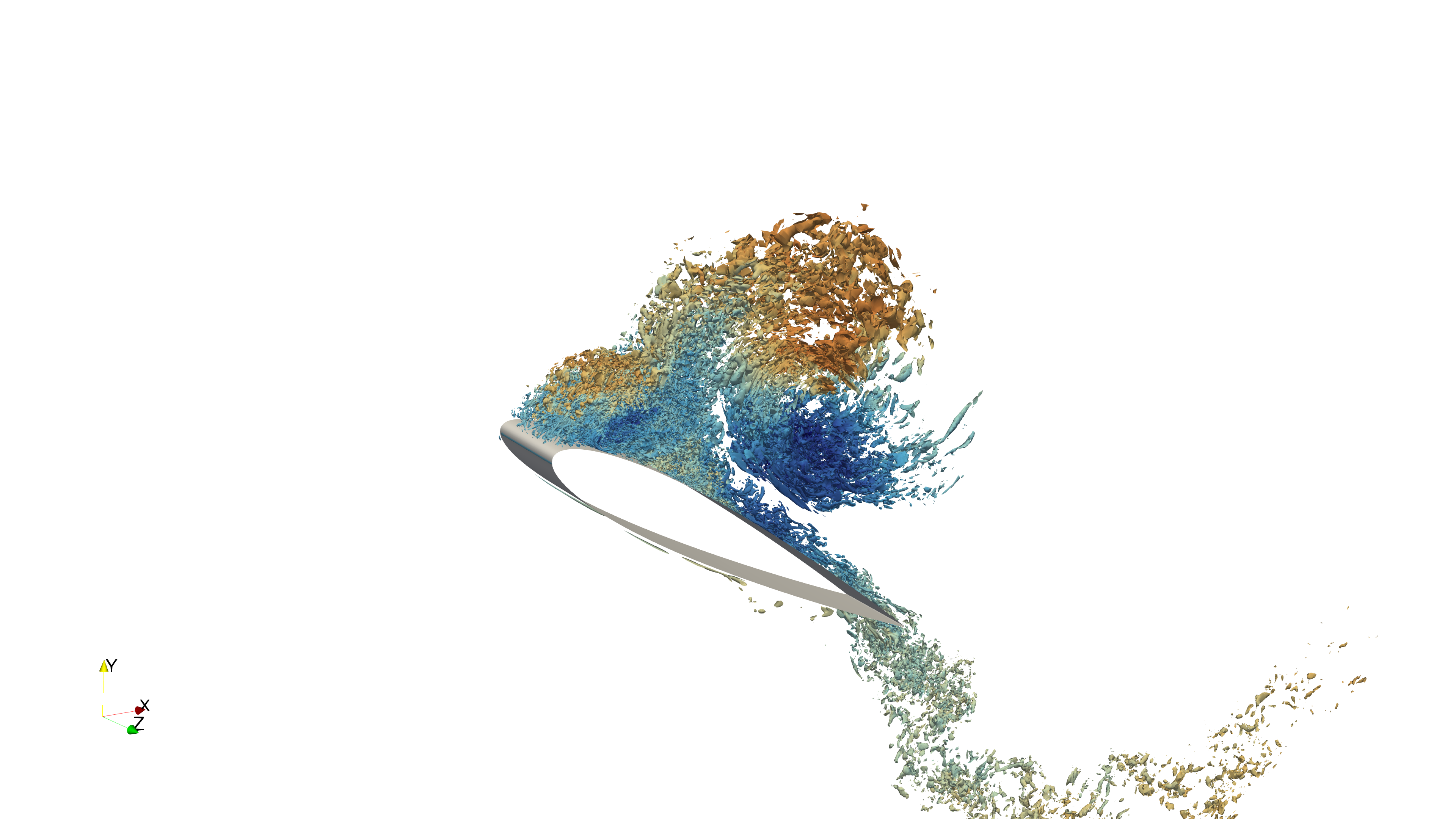}
       \caption{$t = 13.3$.}
       \label{fig:2dof-QCrit1}
   \end{subfigure}
   \hfill
   \begin{subfigure}{0.475\linewidth}
       \centering
       \includegraphics[width=\linewidth]{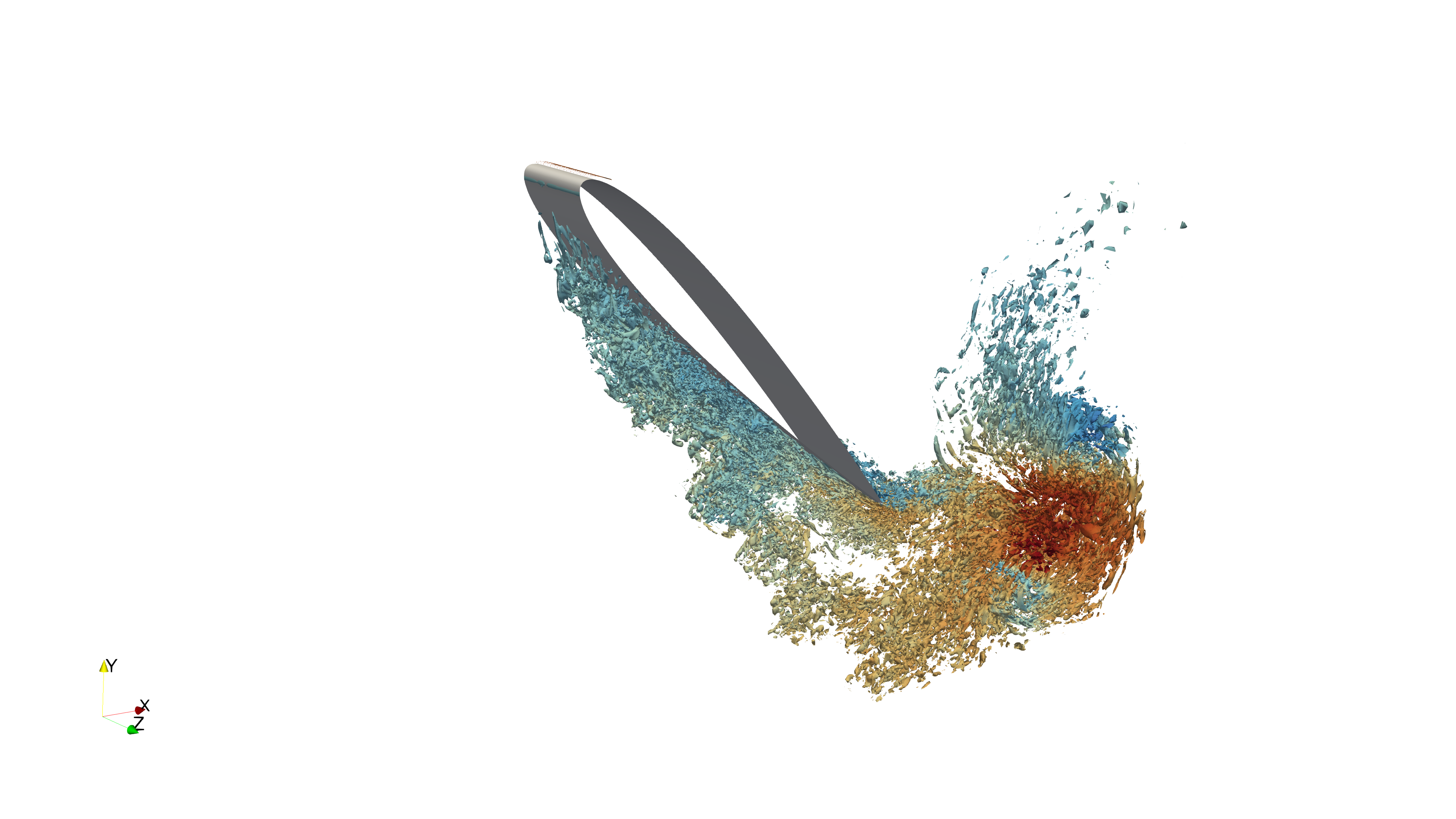}
       \caption{$t = 17.7$.}
       \label{fig:2dof-QCrit2}
   \end{subfigure}
   \\
   \begin{subfigure}{0.475\linewidth}
       \centering
       \includegraphics[width=\linewidth]{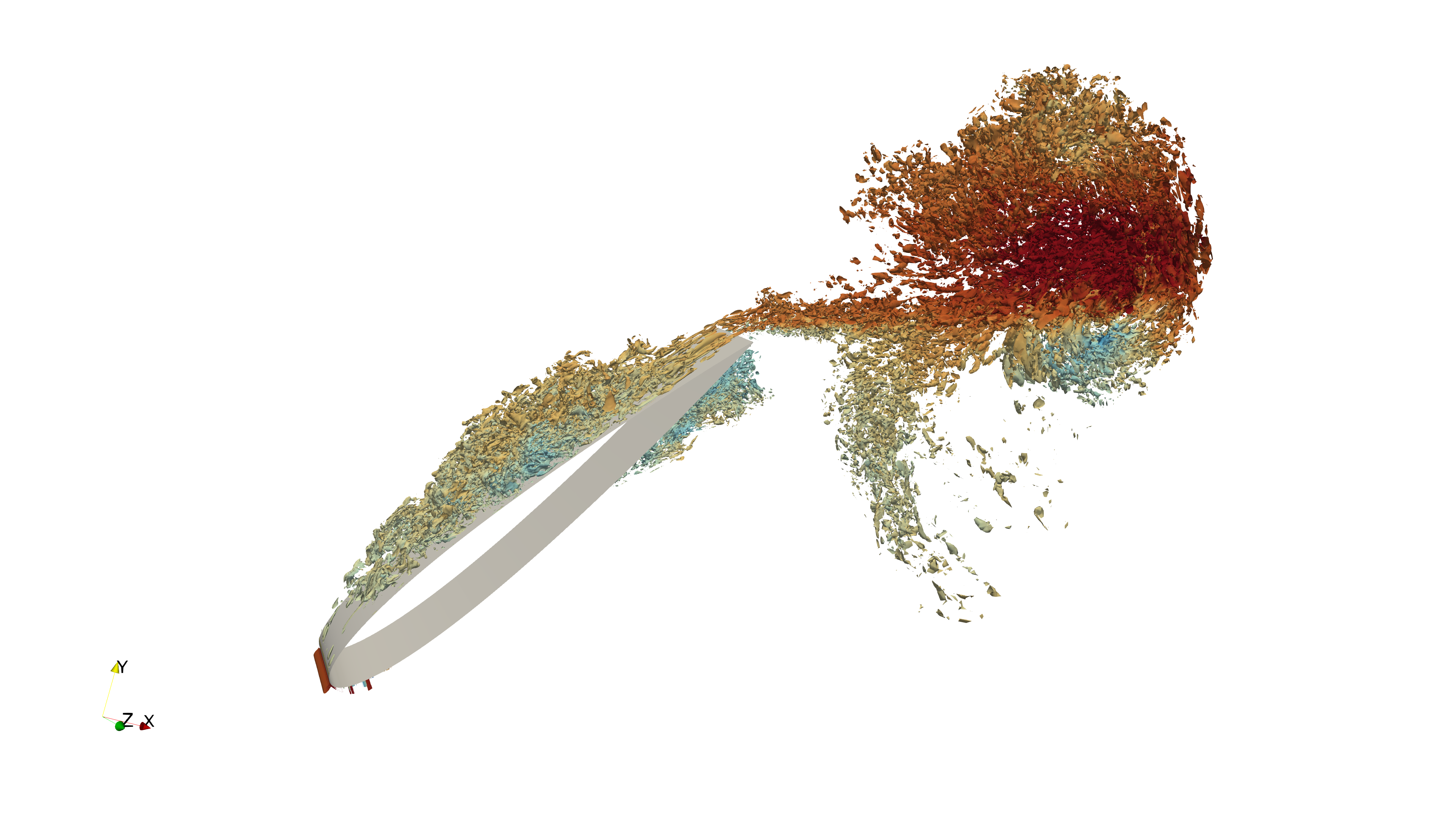}
       \caption{$t = 20.5$.}
       \label{fig:2dof-QCrit3}
   \end{subfigure}
   \hfill
   \begin{subfigure}{0.475\linewidth}
       \centering
       \includegraphics[width=\linewidth]{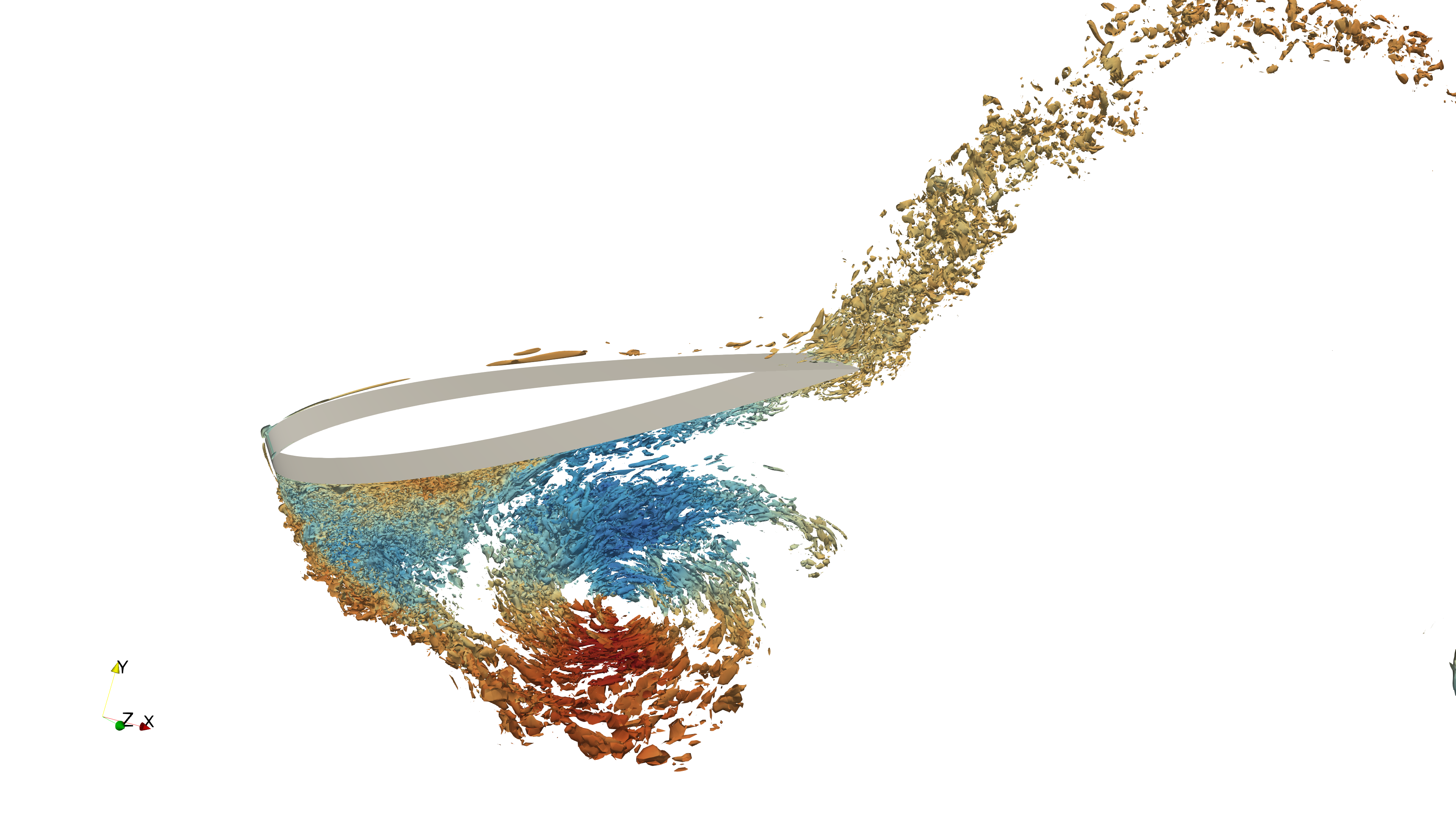}
       \caption{$t = 21.8$.}
       \label{fig:2dof-QCrit4}
   \end{subfigure}
   \caption{Iso-surfaces of the second velocity invariant $Q = 50$, colored by streamwise velocity $u$.}
   \label{fig:2dof-QCrit}
\end{figure}

\subsection{Supersonic bluff body}
This final test case considers a quintic polynomial motion of a bluff body in supersonic conditions. The body is a box resembling a satellite-like body, with length equal to $1$, width and height equal to $0.5$. The flow is in the $x$ direction at $\Mach=1.5$ and $\Rey=\num{1e4}$. The prescribed movement consists of a clockwise (CW) rotation about the $x$-axis. The angular velocity increases smoothly to a fixed value before going back to zero. The first maneuver is continuously followed by a reversal in the opposite direction, and a new rotation about the $z$-axis. After the values reach a prescribed value they continue the rotation with a constant rate in both axes.

The prescribed rigid-body motion is defined through the spatial angular velocity vector, with smooth transitions generated using the quintic smoothstep function as defined in Eq.~\eqref{eq:reentry-omega}. The motion is divided into four transient phases followed by a constant-rotation phase. The angular-velocity components are prescribed as Eq.~\eqref{eq:reentry-motion}. The motion is defined by the set of parameters reported in Tab.~\ref{tab:reentry-parameters}. Fig.~\ref{fig:reentry-motion} and \ref{fig:reentry-motion-3d} show the angular velocities as a function of time and a visualization of the attitude at different snapshots of the motion.

\begin{equation}
    \label{eq:reentry-omega}
    \boldsymbol{\omega}(t) =
    \begin{bmatrix}
        \omega_x(t)\\
        0\\ \omega_z(t)
    \end{bmatrix},
    \qquad \sigma(s)=10s^3-15s^4+6s^5,
\end{equation}

\begin{equation}
    \omega_x(t)=
    \begin{cases}
        -\Omega_{\mathrm{cw}}\, \sigma\!\left(\dfrac{t}{T_{\mathrm{acc}}}\right),                      & 0\le t<t_1, \\
        -\Omega_{\mathrm{cw}},                                                                              & t_1\le t<t_2, \\
        -\Omega_{\mathrm{cw}} \left[1-\sigma\!\left(\dfrac{t-t_2}{T_{\mathrm{dec}}}\right) \right],    & t_2\le t<t_3, \\
        \Omega_{\mathrm{ccw}}\, \sigma\!\left(\dfrac{t-t_3}{T_{\mathrm{rev}}}\right),                  & t_3\le t<t_4, \\
        \Omega_{\mathrm{ccw}},                                                                              & t\ge t_4,
    \end{cases}
    \qquad
    \omega_z(t)=
    \begin{cases}
        0, & 0\le t<t_3, \\
        \Omega_z\,\sigma\!\left(\dfrac{t-t_3}{T_{\mathrm{rev}}}\right), & t_3\le t<t_4, \\
        \Omega_z, & t\ge t_4,
    \end{cases}
    \label{eq:reentry-motion}
\end{equation}

\begin{equation*}
    t_1=T_{\mathrm{acc}},\qquad
    t_2=T_{\mathrm{acc}}+T_{\mathrm{hold}},\qquad
    t_3=T_{\mathrm{acc}}+T_{\mathrm{hold}}+T_{\mathrm{dec}},\qquad
    t_4=T_{\mathrm{acc}}+T_{\mathrm{hold}}+T_{\mathrm{dec}}+T_{\mathrm{rev}}.
\end{equation*}

\begin{table}[H]
    \centering
    \caption{Parameters defining the prescribed rigid-body rotational motion.}
    \label{tab:reentry-parameters}
    \begin{tabular}{ccc}
        \hline
        Parameter & Symbol & Value \\
        \hline
        Peak clockwise angular velocity about $x$
        & $\Omega_{\mathrm{cw}}$
        & $0.02618$ \\

        Final counterclockwise angular velocity about $x$
        & $\Omega_{\mathrm{ccw}}$
        & $0.04000$ \\

        Final angular velocity about $z$
        & $\Omega_z$
        & $0.03350$ \\

        Clockwise acceleration duration
        & $T_{\mathrm{acc}}$
        & $20.0$ \\

        Constant clockwise rotation duration
        & $T_{\mathrm{hold}}$
        & $40.0$ \\

        Clockwise deceleration duration
        & $T_{\mathrm{dec}}$
        & $20.0$ \\

        Reversal/combined-rotation ramp duration
        & $T_{\mathrm{rev}}$
        & $40.0$ \\
        \hline
    \end{tabular}
\end{table}

\begin{figure}[H]
    \centering
    \includegraphics[width=\linewidth]{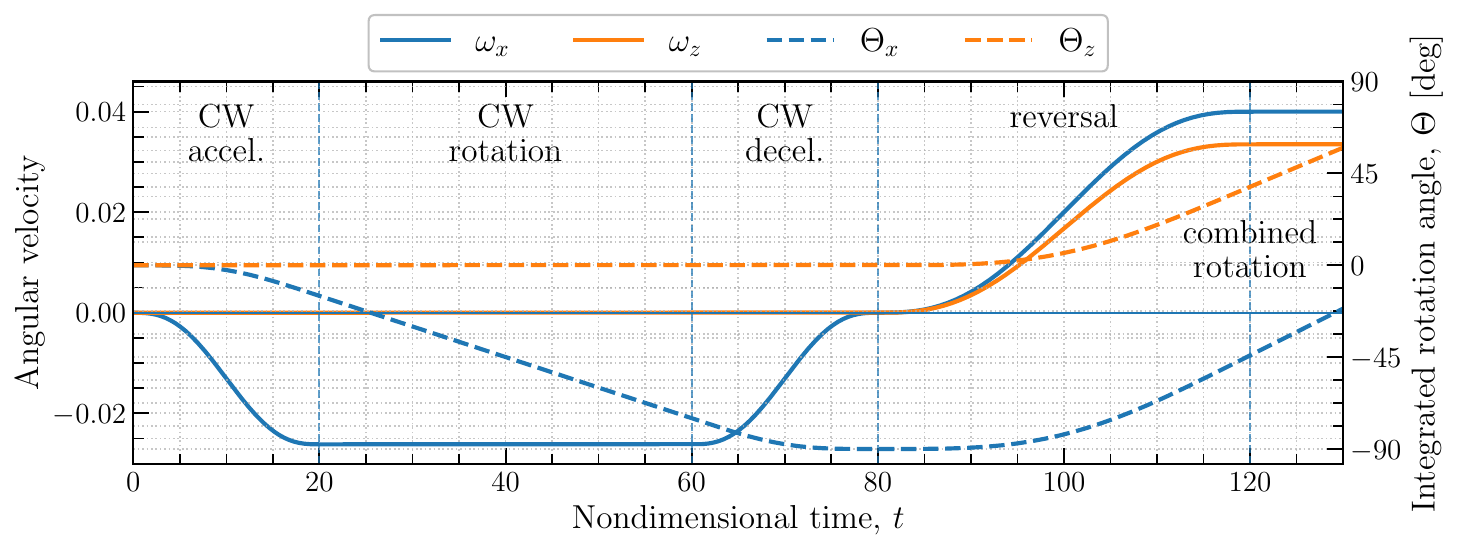}
    \caption{Angular velocity ($\omega$) and integrated rotation angle ($\Theta$) with respect to nondimensional time $t$.}
    \label{fig:reentry-motion}
\end{figure}

\begin{figure}[H]
    \centering
    \includegraphics[width=\linewidth]{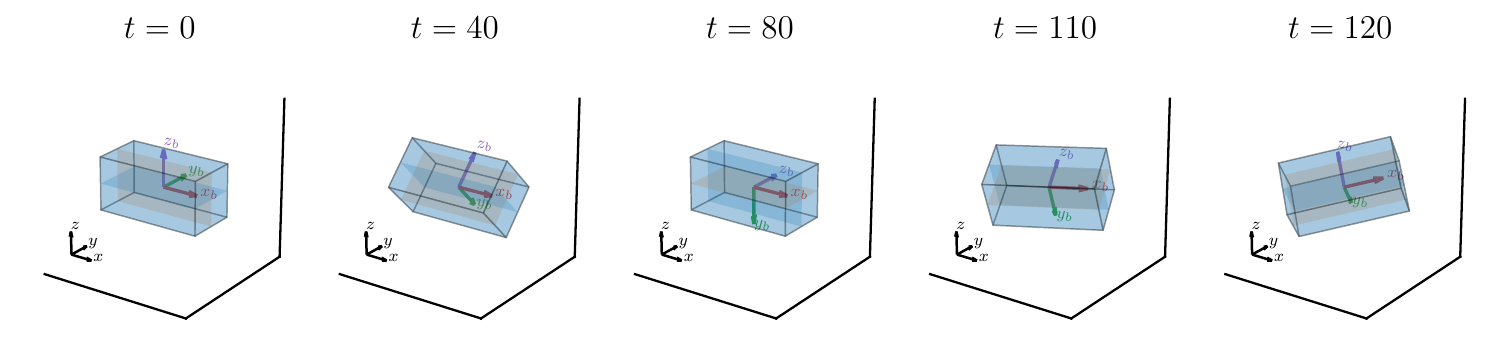}
    \caption{Attitude visualization at different times of the revolution.}
    \label{fig:reentry-motion-3d}
\end{figure}

We perform a large-eddy simulation (LES) of the bluff-body configuration over a total duration of $130$ convective time units.
The computational grid topology is shown in Fig.~\ref{fig:reentry-grid-domain}. Near the body, triangular prismatic elements are extruded in the wall-normal direction, while the remainder of the domain is initially filled with tetrahedral elements.
The resulting mesh is subsequently converted into a fully unstructured hexahedral grid. At the box surface, the first wall-normal spacing is selected to achieve approximately $y^+=1$, while the streamwise and spanwise resolutions are maintained below $10$ wall units.
Two refinement regions are positioned in the nearfield to allow a smoother transition to coarser elements in the farfield.
A uniform polynomial degree of $p=3$ is employed throughout the domain, resulting in approximately $\num{248e6}$ degrees of freedom.
Fig.~\ref{fig:reentry-flow} presents three-dimensional visualizations of the density and velocity fields at selected instants throughout the prescribed motion. The initial rotation about the streamwise axis primarily modifies the instantaneous shedding dynamics, while leaving the large-scale structure of the flow field largely unchanged. In contrast, once rotation about the transverse direction is initiated, a pronounced reorientation of the wake is observed, accompanied by a corresponding modification of the bow-shock structure and its interaction with the surrounding flow.

Overall, this test demonstrates the stability of the formulation in supersonic flow conditions and truly three-dimensional rigid-body motion.
The computation remains stable throughout all the phases of the movement, while accommodating the strong compressibility effects associated with the bow shock and the unsteady wake dynamics.
In particular, the successful treatment of simultaneous rotational components about different axes provides strong verification of the ALE formulation. It confirms its capabilities for simulations involving complex, time-dependent body kinematics.
\begin{figure}[H]
   \centering
   \begin{subfigure}{0.475\linewidth}
       \centering
       \includegraphics[width=\linewidth]{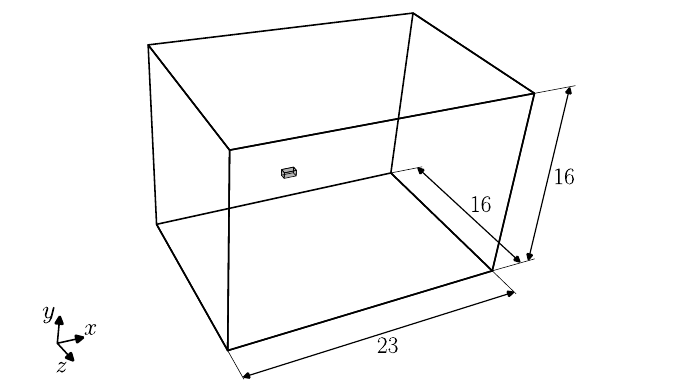}
       \label{fig:reentry-domain}
   \end{subfigure}
   \hfill
   \begin{subfigure}{0.475\linewidth}
       \centering
       \includegraphics[width=\linewidth]{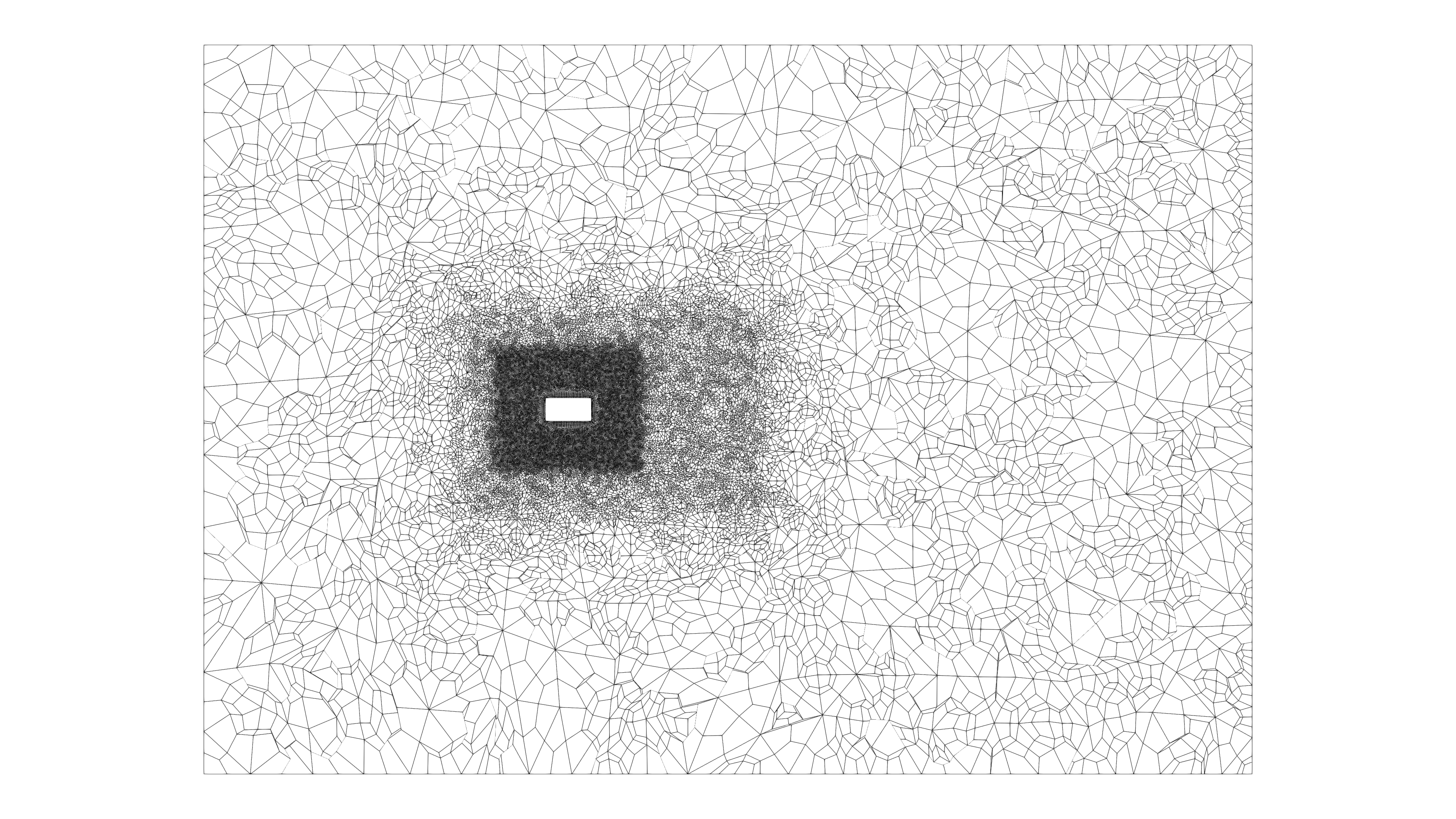}
       \label{fig:reentry-grid}
   \end{subfigure}
   \caption{Domain topology (left) and slice of the grid discretization (right).}
   \label{fig:reentry-grid-domain}
\end{figure}

\begin{figure}
    \centering
    \setlength{\tabcolsep}{3pt}
    \begin{tabular}{@{}c c c@{}}
        & \includegraphics[width=0.43\textwidth]{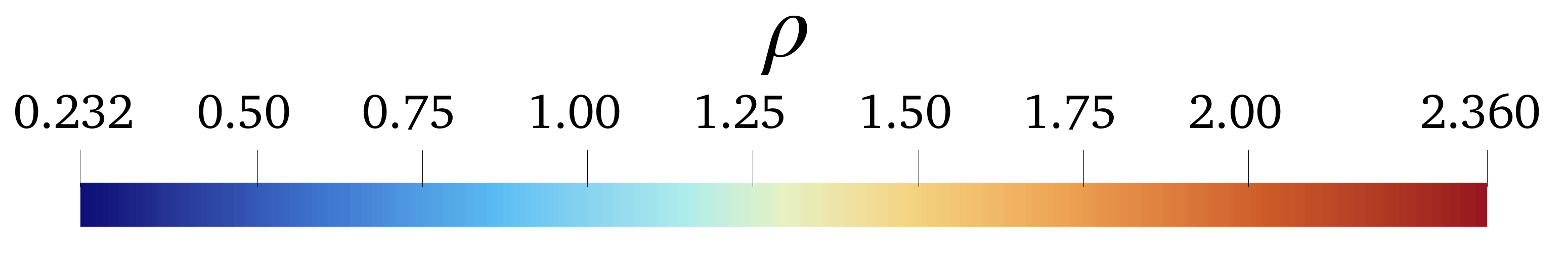}
        & \includegraphics[width=0.43\textwidth]{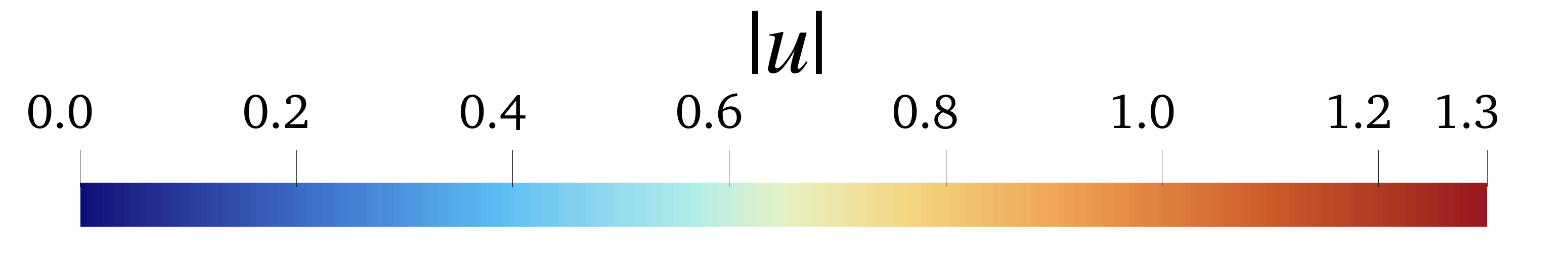} \\[1mm]
        $t=0$
        & \includegraphics[width=0.43\textwidth]{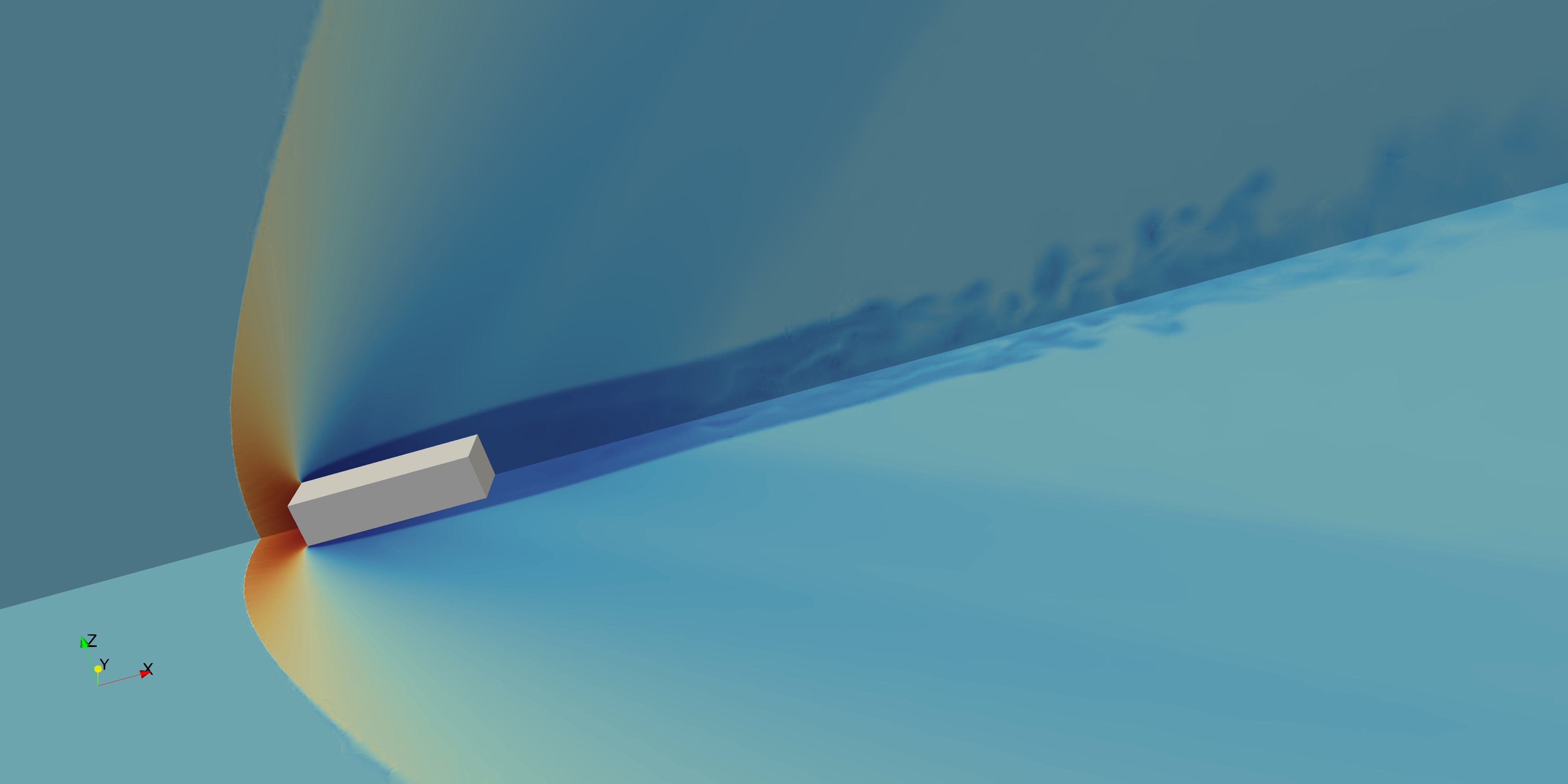}
        & \includegraphics[width=0.43\textwidth]{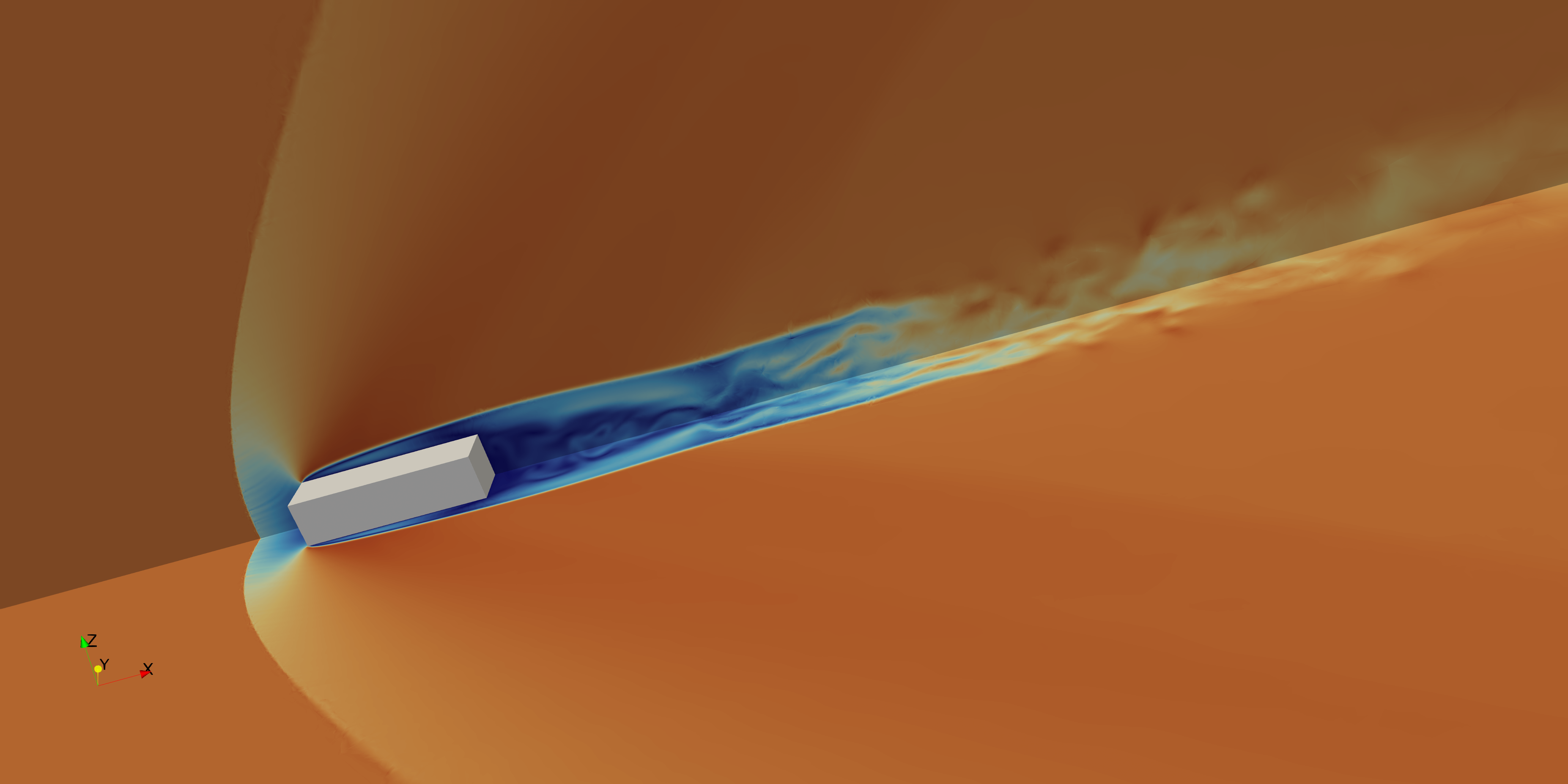} \\[1mm]
        $t=40$
        & \includegraphics[width=0.43\textwidth]{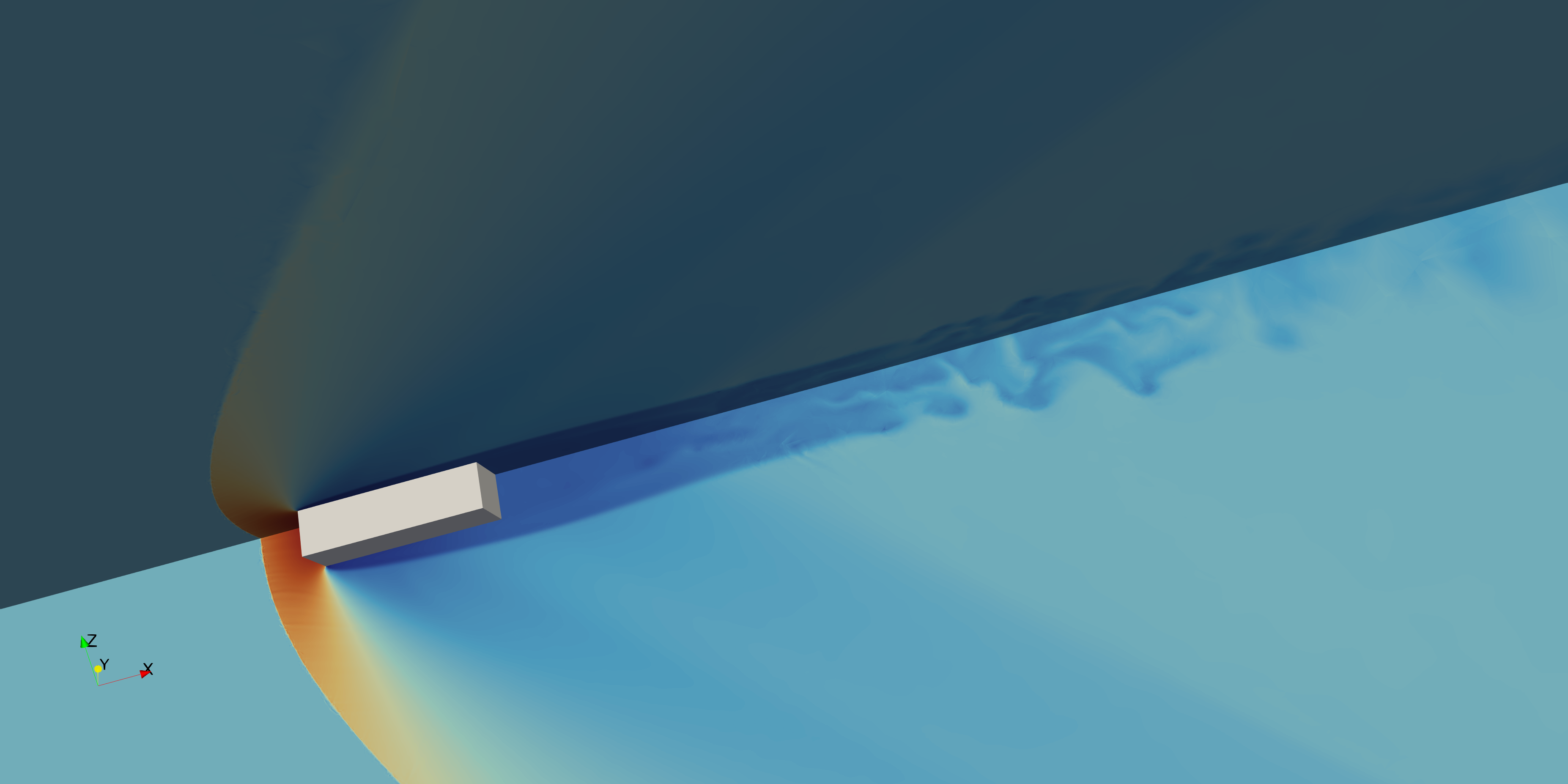}
        & \includegraphics[width=0.43\textwidth]{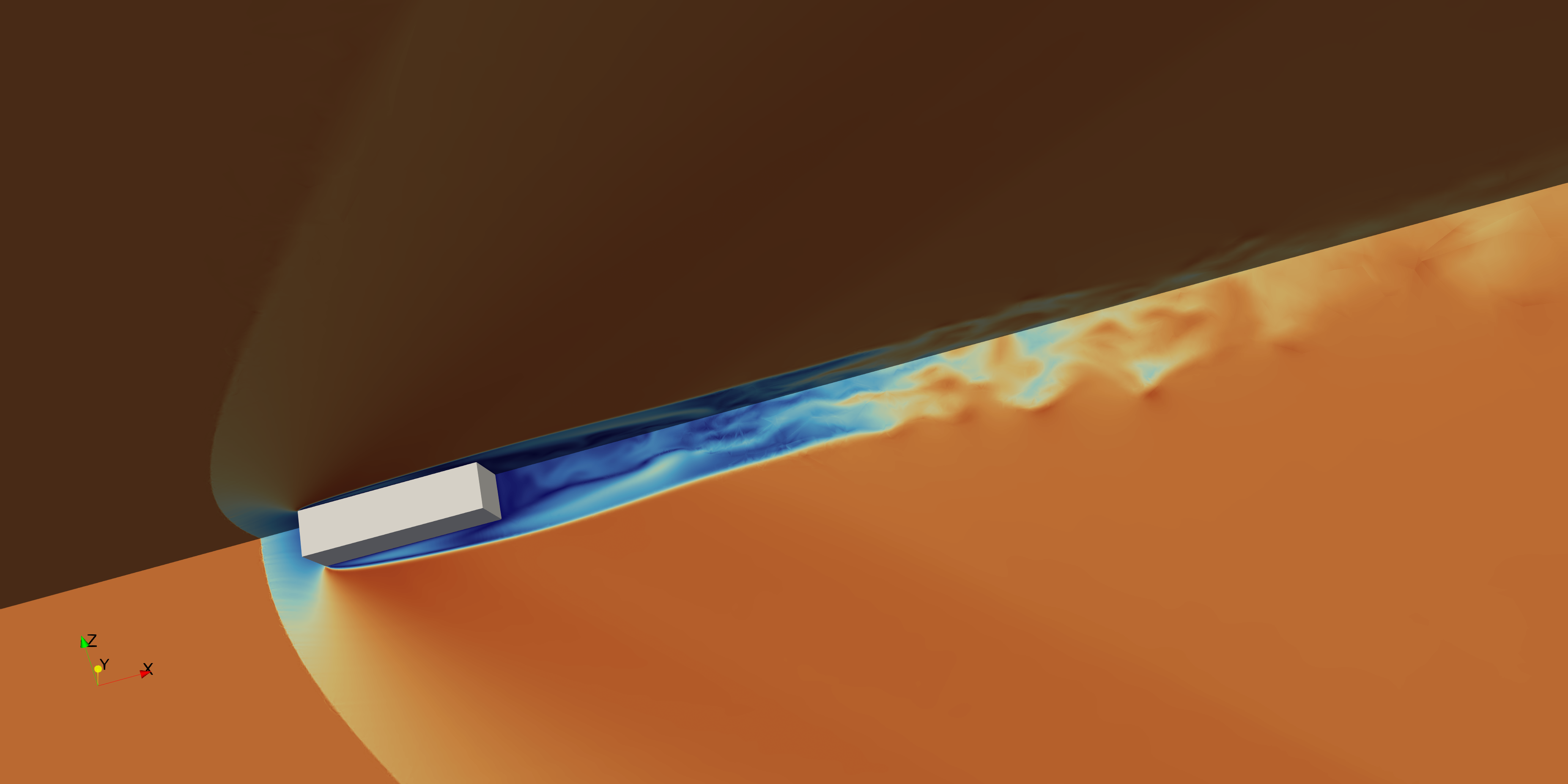} \\[1mm]
        $t=80$
        & \includegraphics[width=0.43\textwidth]{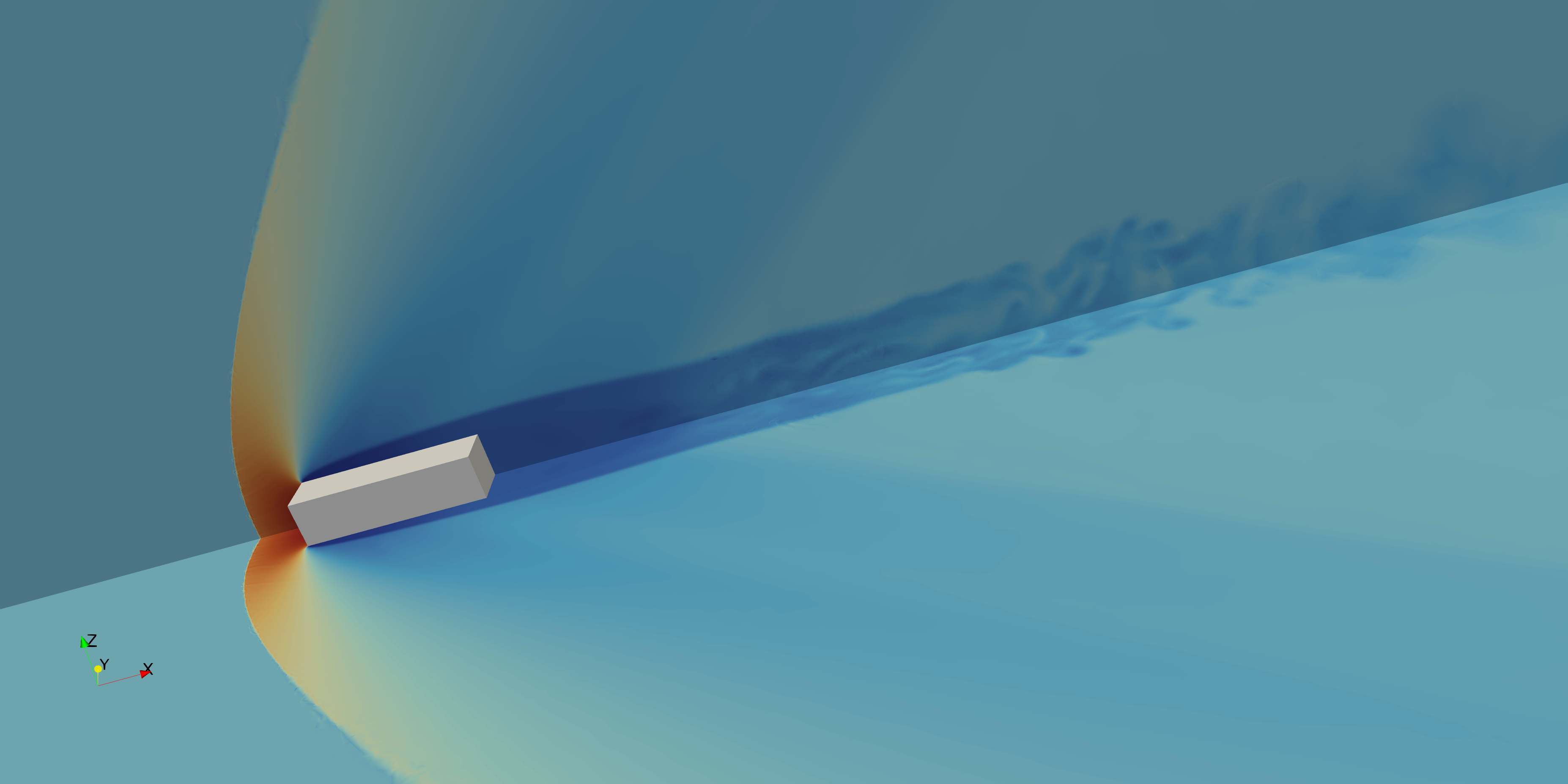}
        & \includegraphics[width=0.43\textwidth]{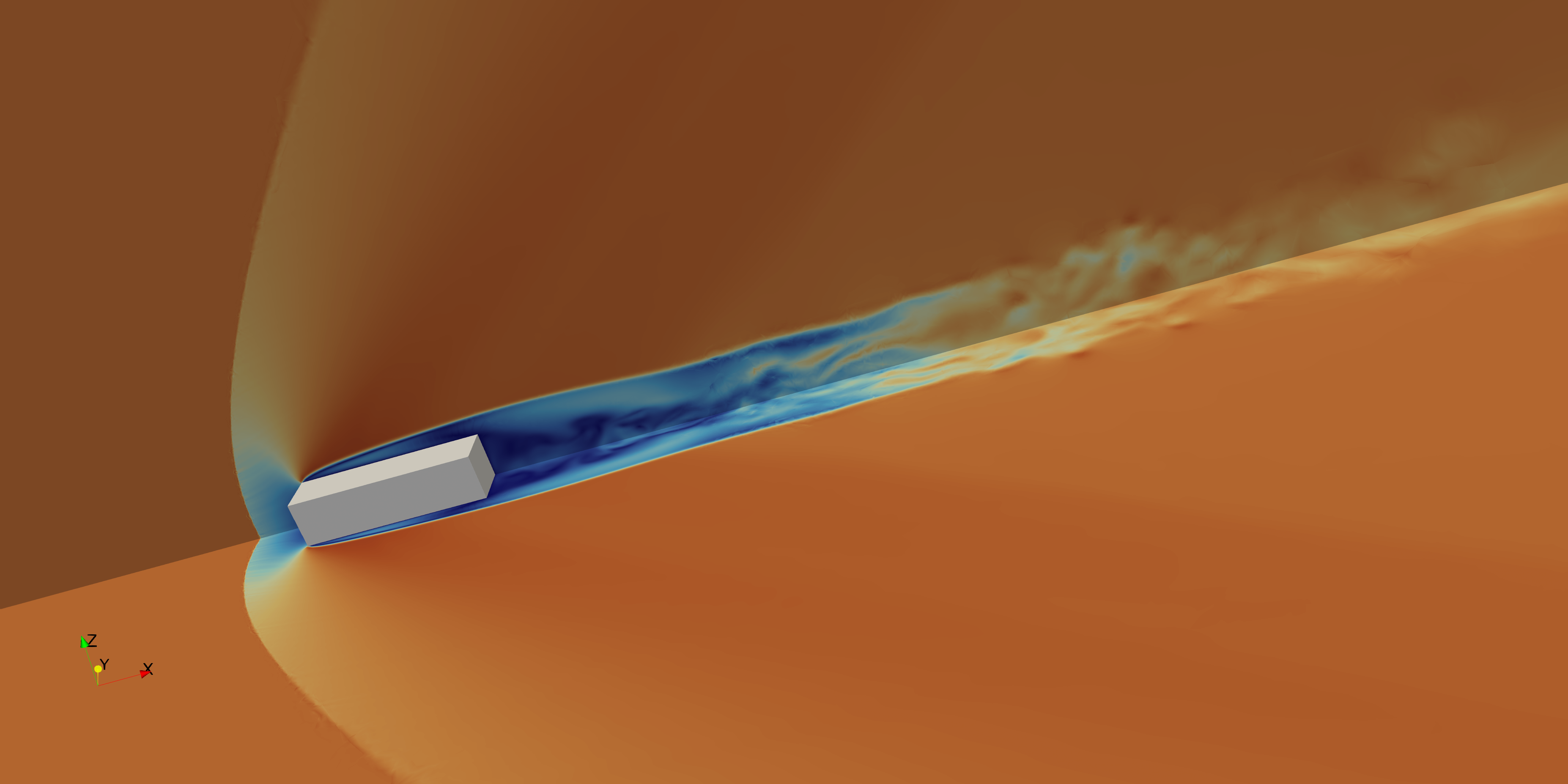} \\[1mm]
        $t=110$
        & \includegraphics[width=0.43\textwidth]{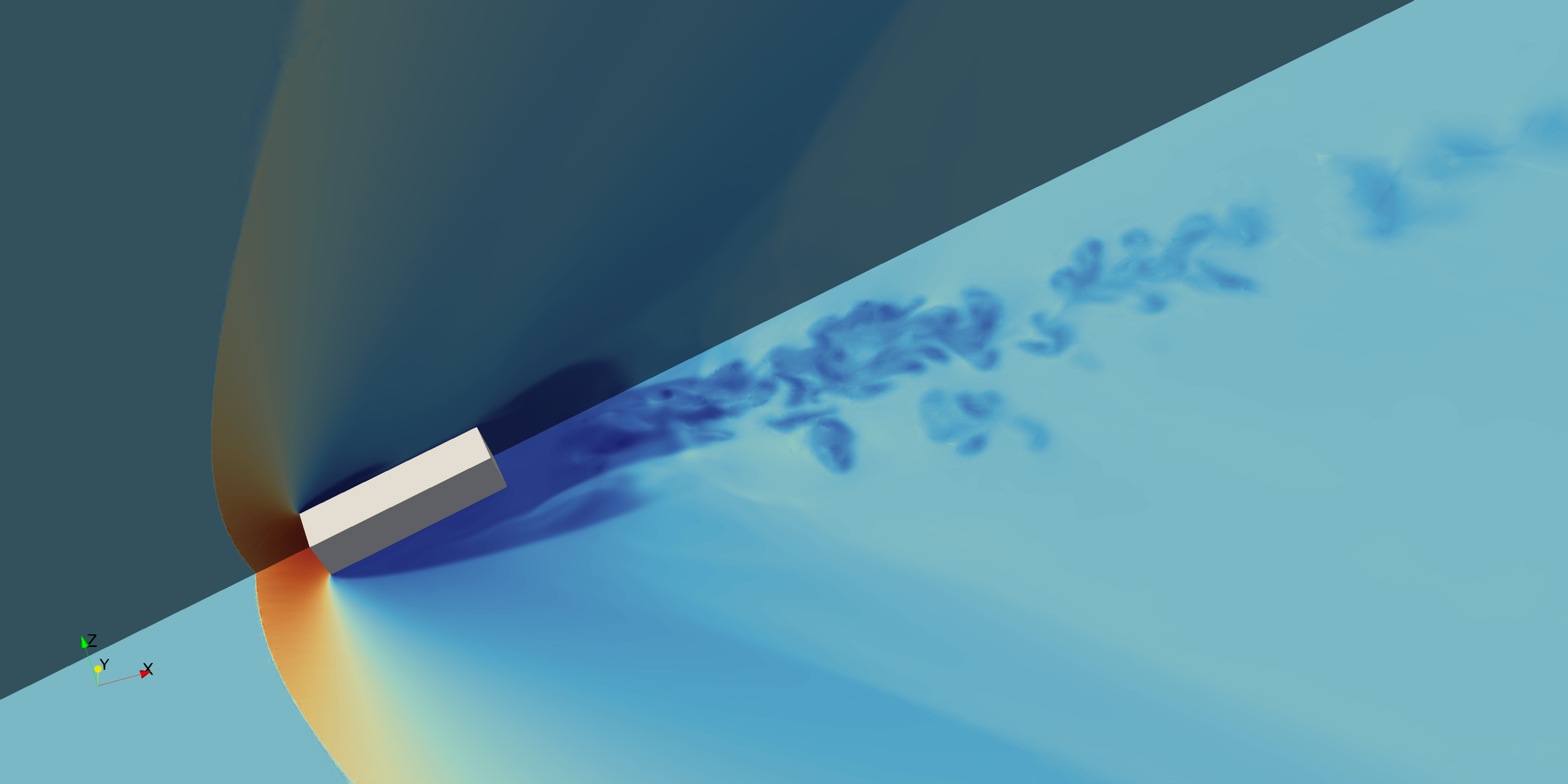}
        & \includegraphics[width=0.43\textwidth]{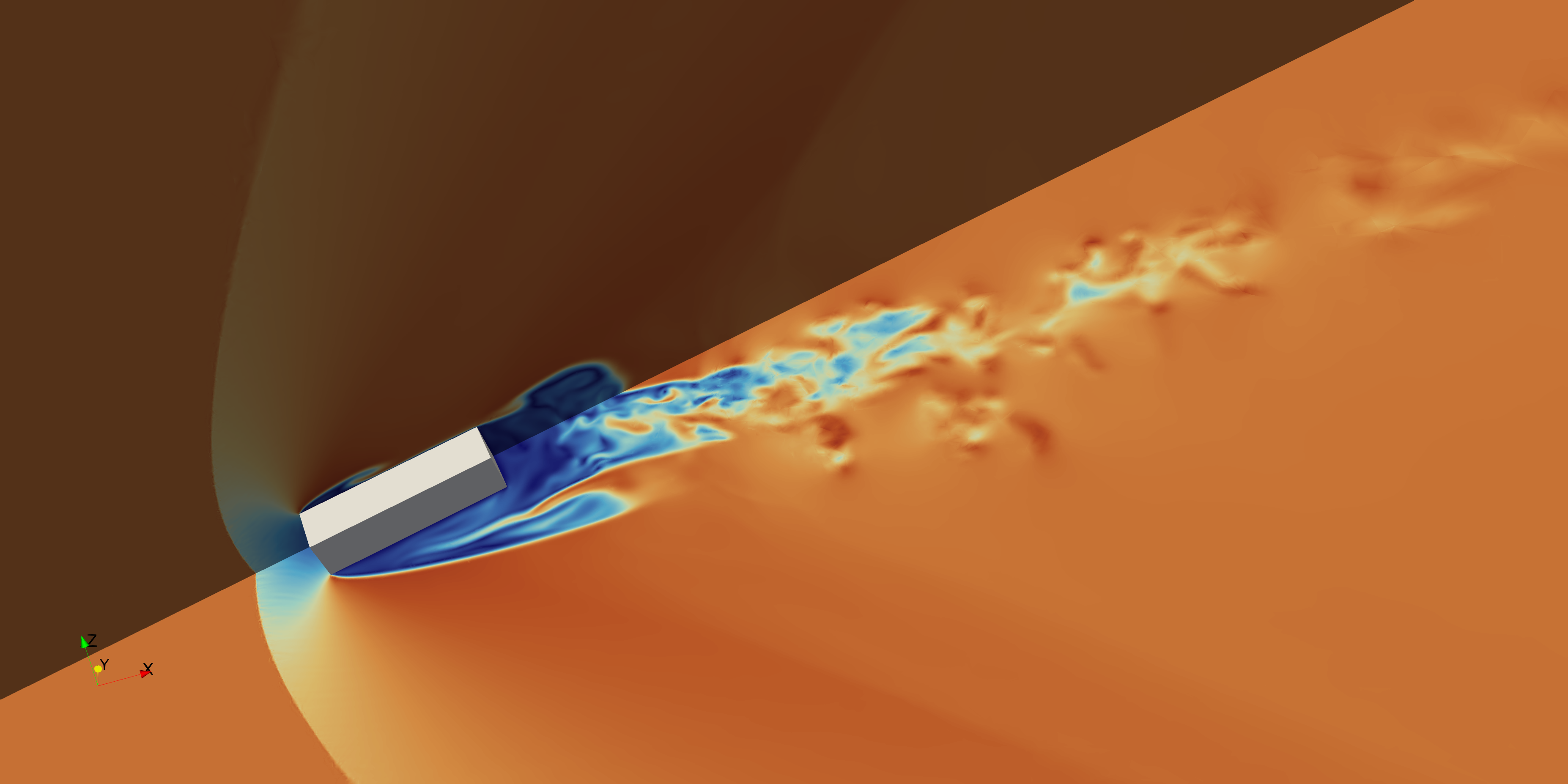} \\[1mm]
        $t=120$
        & \includegraphics[width=0.43\textwidth]{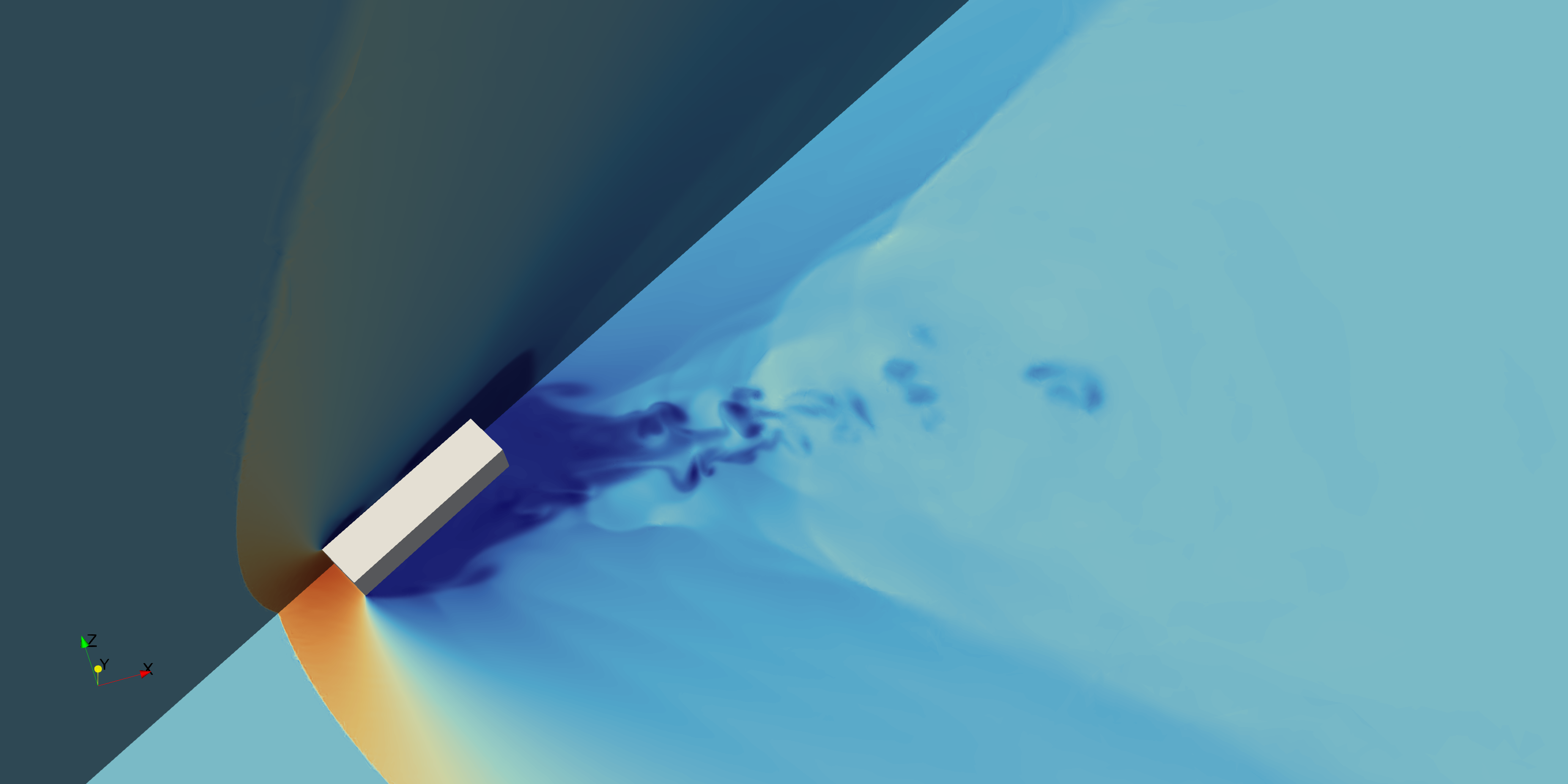}
        & \includegraphics[width=0.43\textwidth]{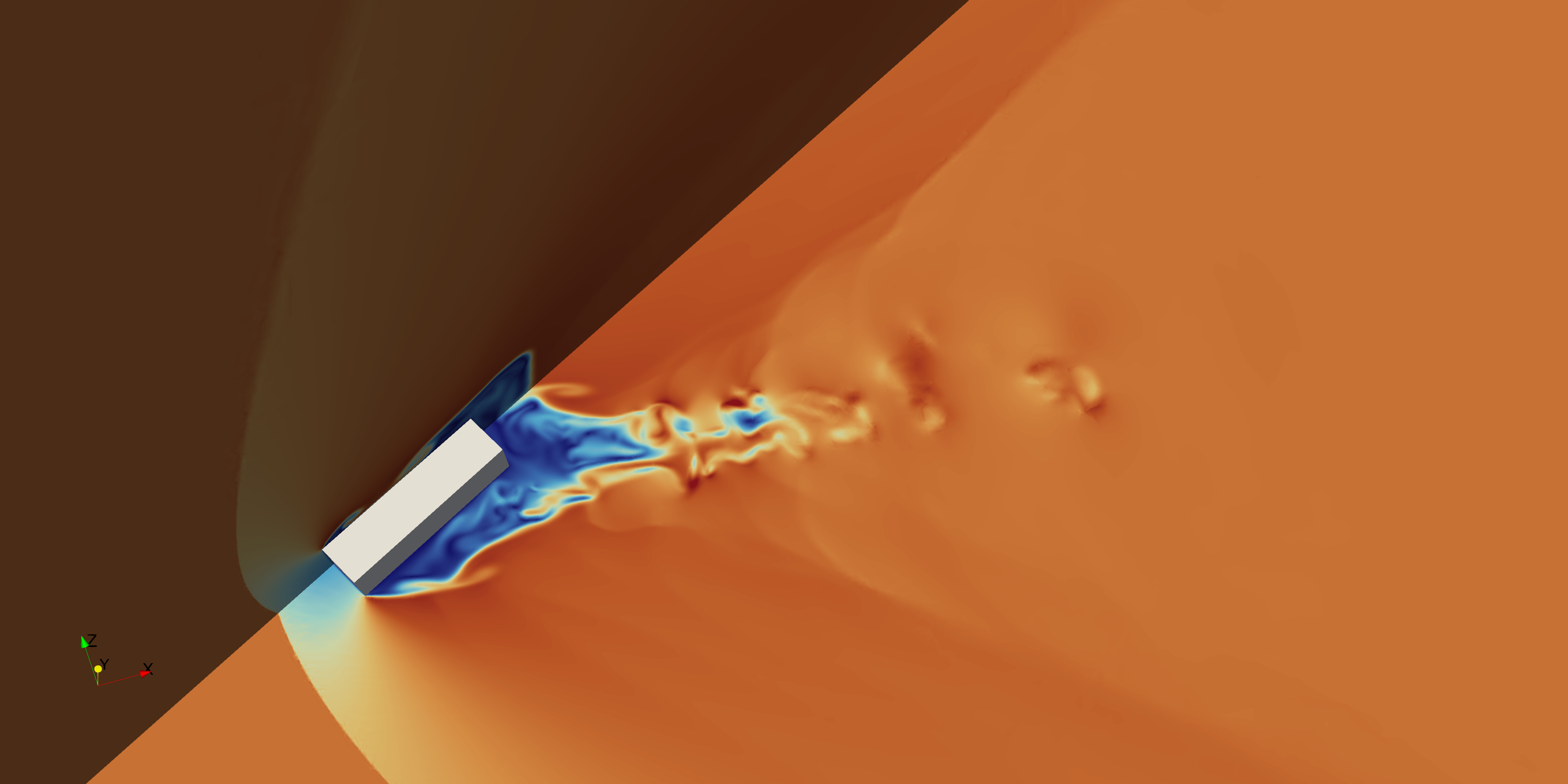}
    \end{tabular}
    \caption{Evolution of the density and velocity fields at selected timesteps.}
    \label{fig:reentry-flow}
\end{figure}

\section{Conclusions}
\label{sec:conclusions}
In this work, we demonstrate that moving-wall boundary conditions are provably entropy conservative for the compressible Euler equations and entropy stable for the compressible Navier–Stokes equations at the continuous level. The no-slip boundary conditions, when implemented with relative velocity, ensure that the inviscid contribution to the entropy balance remains bounded. A proper treatment of the viscous contributions, which results only in entropy dissipation, guarantees that the boundary contribution is entropy stable.

The proposed framework considers an arbitrary Lagrangian-Eulerian formulation of the nodal-discontinuous Galerkin method with penalty-type interface and boundary operators.
By utilizing summation-by-parts operators along with appropriate numerical fluxes, we extend the proof to the semi-discrete equations. As a result, the proposed framework ensures non-linear stability in the $L^2$ sense for both the continuous and semi-discrete equations.

An extensive set of numerical tests validates the accuracy, robustness, and scalability of the numerical implementation. Specifically, the isentropic vortex test case confirms convergence and verifies the implementation for canonical periodic boundaries. The annular pipe test demonstrates accuracy for wall-bounded flows. We achieve convergence of order $(p+1)$, as expected, for both of these tests. 

Furthermore, the 2D heaving-pitching NACA 0012 test validates forces and fluid structures against state-of-the-art high-order computational codes. The 2-DOF airfoil test showcases the scalability of the computation to 3D turbulent subsonic conditions, capturing significant unsteady motion and dynamic instabilities while remaining robust with fluid-structure interaction (FSI) coupling. Lastly, the bluff body test highlights the robustness of the implementation for supersonic large-scale computations over extended time integration.

Overall, this work shows that high-order entropy-stable numerical schemes can be effectively applied to complex moving-motion test cases across various flow regimes and multi-physics interactions.

\section*{Acknowledgments}
This work was supported by King Abdullah University of Science and Technology (KAUST) through grants no. BAS/1/1663-01-01 and REP/1/7200-01-01. The authors also gratefully acknowledge the KAUST Supercomputing Laboratory for providing computational resources on the CPU partition of the Shaheen III supercomputer. Finally, the authors acknowledge the use of Grammarly for the final check of the submitted manuscript.

\section*{CRediT authorship contribution statement}\noindent
\textbf{Luca Galimberti:} Conceptualization, Methodology, Software, Validation, Formal analysis, Investigation, Data Curation, Writing - Original Draft, Writing - Review \& Editing, Visualization. \\
\textbf{Roberto Nuca:} Methodology, Investigation, Formal Analysis, Writing - Review \& Editing, Visualization. \\
\textbf{Lisandro Dalcin:} Software, Validation, Writing - Review \& Editing, Supervision. \\
\textbf{Alberto Guardone:} Methodology, Writing - Review \& Editing, Supervision, Project administration, Funding acquisition. \\
\textbf{Matteo Parsani:}
Conceptualization, Methodology, Formal Analysis, Resources, Writing - Original Draft, Writing - Review \& Editing, Supervision, Project administration, Funding acquisition. \\

\section*{References}

\bibliographystyle{elsarticle-num}
\bibliography{refs}

\appendix

\section{Symbolic proof: moving-wall boundary condition}
\label{app:SymbolicMovingWallBC}
This appendix lists a symbolic proof of entropy conservation on arbitrary moving grids. 
The proof assumes a pointwise relation between the ghost node and the boundary node.
Physical quantities, fluxes, and 2-point fluxes match the definitions of the paper.
\begin{lstlisting}[language=Python, caption={Symbolic proof for the moving-wall boundary condition.}, label={lst:symbolic-proof-BC}]
import numpy as np
from sympy import *

init_printing()

# Define few utility routines

def zeros(*shape):
    return np.zeros(shape, dtype=object)

def dot(a, b):
    a = np.array(a)
    b = np.array(b)
    return np.dot(a, b)

def tensordot(a, b, axes=2):
    a = np.array(a)
    b = np.array(b)
    return np.tensordot(a, b, axes)

def matmul(a, b):
    a = np.array(a)
    b = np.array(b)
    return np.dot(a, b)

def outer(a, b):
    a = np.array(a)
    b = np.array(b)
    return np.outer(a, b)

# Viscosity and Conductivity are functions of temperature
Mu = symbols('mu', real=True, positive=True, cls=Function)
Kappa = symbols('kappa', real=True, positive=True, cls=Function)

# All these are constants for the analysis
R = symbols('R', real=True, positive=True)
gamma = symbols('gamma', real=True, positive=True)
cp = R*gamma/(gamma-1)
cv = R/(gamma-1)
rho_inf = symbols('rho_inf', real=True, positive=True)
T_inf = symbols('T_inf', real=True, positive=True)

def pressure(rho, T):
    return rho*R*T

def specific_energy(T):
    return cv*T

def specific_enthalpy(T):
    return cp*T

def specific_entropy(rho, T):
    s = cv*log(T/T_inf) - R*log(rho/rho_inf)
    return s

def kinetic_energy(u):
    return dot(u,u)/2

def EntropyVars(V):
    rho = V[0] # density
    u = V[1:4] # velocity
    T = V[4]   # temperature
    h = specific_enthalpy(T)
    s = specific_entropy(rho, T)
    k = kinetic_energy(u)
    W = zeros(5)
    W[0] = h/T - k/T - s
    W[1:4] = u/T
    W[4] = -1/T
    return W

def PrimitiveVars(W):
    T = -1/W[4]
    u = W[1:4]*T
    h = specific_enthalpy(T)
    k = kinetic_energy(u)
    s = h/T - k/T - W[0]
    rho = rho_inf*exp((cv*log(T/T_inf) - s)/R)
    V = zeros(5)
    V[0] = rho
    V[1:4] = u
    V[4] = T
    return V

def dWdV(Vs):
    V = zeros(5)
    V[0] = symbols('rho', real=True, positive=True)
    V[1:4] = symbols('u0:3', real=True)
    V[4] = symbols('T', real=True, positive=True)
    W = EntropyVars(V)
    mat = zeros(5, 5)
    for a in range(5):
        for b in range(5):
            mat[a,b] = W[a].diff(V[b]).subs(zip(V, Vs))
    return mat

def dVdW(Vs):
    mat = Matrix(dWdV(Vs)).inv()
    return np.array(mat.tolist())

def InviscidFluxALE(n, V, GridV):
    rho = V[0] # density
    u = V[1:4] # velocity
    T = V[4]   # temperature
    p = pressure(rho, T)
    e = specific_energy(T)
    h = specific_enthalpy(T)
    k = kinetic_energy(u)
    E = e + k # total energy
    H = h + k # total enthalpy
    un = dot(n, u)
    Vn = dot(n, GridV)
    fALE = zeros(5)
    fALE[0] = rho*(un - Vn)
    fALE[1:4] = fALE[0]*u + p*n
    fALE[4] = rho*un*H - rho*Vn*E
    return fALE

def LogAverage(a, b):
    if a == b: return a
    return (b-a)/log(b/a)

def EntropyConsistentFluxALE(n,Vl,Vr,GridV):
    # Chandrashekar, doi:10.4208/cicp.170712.010313a
    # Yamaleev, doi:10.1016/j.jcp.2019.108897 (Eq. 5.5)
    gm1 = (gamma-1)
    Bl = 1/Vl[4]
    Br = 1/Vr[4]
    p_avg = (Vl[0] + Vr[0]) / (Bl + Br)
    u_avg = 0.5*(Vl[1:4] + Vr[1:4])
    B_log = LogAverage(Bl,Br)
    rho_log = LogAverage(Vl[0],Vr[0])
    Kl = kinetic_energy(Vl[1:4]); Kr = kinetic_energy(Vr[1:4])
    K_avg = 0.5 * (Kl + Kr)
    mdot = rho_log * dot(u_avg,n)
    mdotALE   = -rho_log * dot(GridV,n)
    fALE = zeros(5)
    fALE[0]   = mdot
    fALE[1:4] = mdot * u_avg + R * p_avg * n
    fALE[4]   = mdot * (R / (gm1 * B_log) - K_avg) + dot(fALE[1:4],u_avg)
    fALE[0]   = fALE[0] + mdotALE
    fALE[1:4] = fALE[1:4] + mdotALE * u_avg
    fALE[4]   = fALE[4] + mdotALE * (R / (gm1 * B_log) + 0.5*dot(Vl[1:4],Vr[1:4]))
    return fALE

def ViscousFlux(n, V, Theta):
    u = V[1:4] # velocity
    T = V[4]   # temperature
    mu, kappa = Mu(T), Kappa(T) # mu and kappa depend on temperature
    Pi = matmul(dVdW(V), Theta) # rotate gradient to primitive vars
    grad_u = Pi[1:4,:] # gradient of velocity
    grad_T = Pi[4,:] # gradient of temperature
    div_u = np.trace(grad_u) # divergence of velocity
    I = eye(3) # identity tensor
    epsilon = zeros(3,3) # rate-of-strain tensor
    for i in range(3):
        for j in range(3):
            epsilon[i,j] = (grad_u[i,j] + grad_u[j,i])/2
    tau = mu * (2*epsilon - Rational(2,3)*div_u*I) # Cauchy stress tensor
    traction = dot(tau, n)
    fV = zeros(5) # normal viscous flux
    fV[0] = sympify(0)
    fV[1:4] = traction
    fV[4] = dot(u, traction) + kappa * dot(n, grad_T)
    return fV

# From now on, we use the following conventions:
# * (-) and '_m' refers to internal state at a point within a cell boundary face
# * (w) and '_w' refers to given boundary values, superscript (wall) in the paper
# * (+) and '_p' refers to manufactured state, superscript (B) in the paper

# Unit face normal
nrm = symbols('n0:3', real=True)
nrm = np.array(nrm)
nrm = nrm / sqrt(dot(nrm, nrm))

# Index 'k' identifying the boundary face.
# If k is None, use an arbitrary face normal.
# Otherwise, set k from 1 to 6 to assume a regular hexahedron [0,1]^3
# (unit volume, unit face areas). In the paper we pick k = 1 for the sake
# of simplicity, in here we default to k = None to proof the general case.
k = None
if k == 1: nrm[:] = sympify([-1,0,0])
if k == 2: nrm[:] = sympify([+1,0,0])
if k == 3: nrm[:] = sympify([0,-1,0])
if k == 4: nrm[:] = sympify([0,+1,0])
if k == 5: nrm[:] = sympify([0,0,-1])
if k == 6: nrm[:] = sympify([0,0,+1])

# Arbitrary wall velocity
u_w = symbols('u0:3^(wall)', real=True)
u_w = np.array(u_w)

# Wall entropy flux function
g = symbols('g', real=True, cls=Function)
t = symbols('t', real=True) # time

# State (-) at a point in the face
V_m = zeros(5)
V_m[0] = symbols('rho', real=True, positive=True)
V_m[1:4] = symbols('u0:3', real=True)
V_m[4] = symbols('T', real=True, positive=True)

# Gradient (-) at a point in the face
Theta_m = Matrix(MatrixSymbol('Theta', 5, 3))

# --------------------------------
# --- Proof for Inviscid Terms ---
# --------------------------------

# Manufactured (+) state at a point in the face
V_p = zeros(5)
u_m = V_m[1:4] # velocity at (-)
wn = dot(u_w, nrm) # arbitrary normal wall/mesh velocity
un = dot(u_m, nrm) # normal flow velocity
V_p[0] = V_m[0]                          # \
V_p[1] = V_m[1] + 2 * (wn - un) * nrm[0] # |
V_p[2] = V_m[2] + 2 * (wn - un) * nrm[1] # | Equation 32
V_p[3] = V_m[3] + 2 * (wn - un) * nrm[2] # |
V_p[4] = V_m[4]                          # /

W_m = EntropyVars(V_m) # entropy vars at (-)

fALE_m = InviscidFluxALE(nrm, V_m, u_w)
fALE_s = EntropyConsistentFluxALE(nrm, V_m, V_p, u_w)
sat_ALE = fALE_m - fALE_s # inviscid penalty term, Equation 22

# Verify consistency of the two-point ALE flux at equal states.
fALE_consistent = EntropyConsistentFluxALE(nrm, V_m, V_m, u_w)
for i in range(5):
    assert simplify(expand(fALE_consistent[i] - fALE_m[i])) == 0
 
rho_m, u_m = V_m[0], V_m[1:4] # density and velocity at (-)
Psi_m = R * rho_m * dot(nrm, u_m)
phi_m = R * rho_m

PsiALE_m = Psi_m - phi_m * dot(nrm, u_w)
F_mALE = dot(W_m, fALE_m) - PsiALE_m # entropy flux

# Verify the physical ALE entropy flux independently.
s_m = specific_entropy(V_m[0], V_m[4])
F_mALE_expected = -rho_m * s_m * dot(nrm, u_m - u_w)
assert simplify(expand(F_mALE - F_mALE_expected)) == 0

# Inviscid contributions to the RHS of Equation 26
RHS_I = (- F_mALE + dot(W_m, sat_ALE))

RHS_I = ratsimp(RHS_I)  # simplify expressions
RHS_I = expand(RHS_I)   # trigger cancellations
assert RHS_I == 0       # Entropy-conservative BC

# --------------------------------
# --- Proof for Viscous Terms ----
# --------------------------------

# Manufactured (+) state at a point in the face
# Note that this way u^(w) = 1/2*(u^(-) + u^(+))
V_p = zeros(5)
V_p[0] = +V_m[0]              # \
V_p[1] = -V_m[1] + 2 * u_w[0] # |
V_p[2] = -V_m[2] + 2 * u_w[1] # |
V_p[3] = -V_m[3] + 2 * u_w[2] # |
V_p[4] = +V_m[4]              # /

# Manufactured (+) gradient at a point in the face
Pi_p = zeros(5, 3)
Pi_m = matmul(dVdW(V_m), Theta_m)
Pi_p[0,:] = -Pi_m[0,:]            # \
Pi_p[1,:] = +Pi_m[1,:]            # |
Pi_p[2,:] = +Pi_m[2,:]            # |
Pi_p[3,:] = +Pi_m[3,:]            # |
Pi_p[4,:] = -Pi_m[4,:]            # /
Theta_p = matmul(dWdV(V_p), Pi_p) #

W_m = EntropyVars(V_m) # entropy vars at (-)
W_p = EntropyVars(V_p) # entropy vars at (+)

fV_m = ViscousFlux(nrm, V_m, Theta_m)
fV_p = ViscousFlux(nrm, V_p, Theta_p)
sat_V = -(fV_m-fV_p)/2 # viscous penalty term

# Source heat flux corresponding to g(t)
src_L = zeros(5)
T_m = V_m[4] # temperature at (-)
src_L[4] = -T_m * g(t)

# Volume viscous flux to contract with gradient
fV = zeros(5,3)
fV[:,0] = ViscousFlux([1,0,0], V_m, Theta_m)
fV[:,1] = ViscousFlux([0,1,0], V_m, Theta_m)
fV[:,2] = ViscousFlux([0,0,1], V_m, Theta_m)
# Gradient penalty term
sat_Theta = -Rational(1,2) * outer(W_m - W_p, nrm)

RHS_V = (
    + dot(W_m, fV_m)
    + dot(W_m, sat_V)
    + dot(W_m, src_L)
    + tensordot(fV, sat_Theta)
)

RHS_V = ratsimp(RHS_V) # simplify expressions
RHS_V = expand(RHS_V)  # trigger cancellations
assert RHS_V == g(t)   # Entropy-conservative BC

# --------------------------------
# --- Proof for IP dissipation ---
# --------------------------------
# This constant is the IP strength factor

sigma_IP = symbols('sigma_IP', real=True, positive=True)

# Proof using viscous flux evaluations and normal entropy jumps,
# this is the practical way to implement the IP dissipation.
# This proof holds for any normal vector and wall velocity. Note
# that in the paper, and for the sake of simplicity, we express the
# IP dissipation in terms of the # viscous C_jj, j=1,2,3 and we
# assume normal vectors aligned with the Cartesian coordinate directions.

dWn = outer(W_m - W_p, nrm)
fm = ViscousFlux(nrm, V_m, dWn)
fp = ViscousFlux(nrm, V_p, dWn)
M = -sigma_IP * (fm + fp)/2
RHS_IP = dot(W_m, M)
RHS_IP = expand(RHS_IP)
RHS_IP = simplify(RHS_IP)

# Now we check that the IP term is non-positive
u_m = V_m[1:4]
T_m = V_m[4]
du = u_m - u_w
N = outer(nrm, nrm) # rank 1, symmetric, semi-PD matrix
RHS_IP_explicit = (
    -2*sigma_IP*Mu(T_m)/(3*T_m) * ( # this factor is negative
      + dot(du, matmul(N, du))      # this term is non-negative
      + 3*dot(du,du)*dot(nrm,nrm)   # this term is non-negative
    ) # then this is non-positive and thus dissipative
)
RHS_IP_explicit = expand(RHS_IP_explicit)
RHS_IP_explicit = simplify(RHS_IP_explicit)
assert expand(RHS_IP - RHS_IP_explicit) == 0

# Proof using the C_jj, j=1,2,3 viscous matrices,
# this is the usual form we use in proofs.
# This form assumes that the normal vector is aligned with a Cartesian
# coordinate direction. The wall velocity may have an arbitrary normal
# component.

def C_11(V):
    rho,u1,u2,u3,T = V
    mu, kappa = Mu(T), Kappa(T)
    C = zeros(5,5)
    C[1,1] = Rational(4,3)*T*mu
    C[1,4] = Rational(4,3)*T*mu*u1
    C[2,2] = T*mu
    C[2,4] = T*mu*u2
    C[3,3] = T*mu
    C[3,4] = T*mu*u3
    C[4,1] = C[1,4]
    C[4,2] = C[2,4]
    C[4,3] = C[3,4]
    C[4,4] = T**2*kappa+Rational(1,3)*T*mu*(4*u1**2+3*u2**2+3*u3**2)
    return C

def C_22(V):
    rho,u1,u2,u3,T = V
    mu, kappa = Mu(T), Kappa(T)
    C = np.zeros([5,5], dtype=object)
    C[1,1] = T*mu
    C[1,4] = T*mu*u1
    C[2,2] = Rational(4,3)*T*mu
    C[2,4] = Rational(4,3)*T*mu*u2
    C[3,3] = T*mu
    C[3,4] = T*mu*u3
    C[4,1] = C[1,4]
    C[4,2] = C[2,4]
    C[4,3] = C[3,4]
    C[4,4] = T**2*kappa+Rational(1,3)*T*mu*(3*u1**2+4*u2**2+3*u3**2)
    return C

def C_33(V):
    rho,u1,u2,u3,T = V
    mu, kappa = Mu(T), Kappa(T)
    C = zeros(5, 5)
    C[1,1] = T*mu
    C[1,4] = T*mu*u1
    C[2,2] = T*mu
    C[2,4] = T*mu*u2
    C[3,3] = Rational(4,3)*T*mu
    C[3,4] = Rational(4,3)*T*mu*u3
    C[4,1] = C[1,4]
    C[4,2] = C[2,4]
    C[4,3] = C[3,4]
    C[4,4] = T**2*kappa+Rational(1,3)*T*mu*(3*u1**2+3*u2**2+4*u3**2)
    return C

if k is not None:
    if k in (1, 2):
        C_m = C_11(V_m)
        C_p = C_11(V_p)
    if k in (3, 4):
        C_m = C_22(V_m)
        C_p = C_22(V_p)
    if k in (5, 6):
        C_m = C_33(V_m)
        C_p = C_33(V_p)
    L = -sigma_IP * (C_m + C_p)/2
    M = dot(L, W_m - W_p)
    RHS_IP_v2 = dot(W_m, M)
    RHS_IP_v2 = expand(RHS_IP_v2)
    RHS_IP_v2 = simplify(RHS_IP_v2)
    assert RHS_IP == RHS_IP_v2
\end{lstlisting}

\section{Symbolic proof: volume contribution}
\label{app:SymbolicVolumeContribution}
This appendix presents a symbolic verification of the entropy-conservation analysis for the semi-discrete formulation on arbitrary moving grids.
The derivation checks the SBP identities, the ALE volume entropy reduction with the spatial and temporal geometric conservation-law residuals retained explicitly, and the entropy-conservative coupling across interior interfaces.
\begin{lstlisting}[language=Python, caption={Symbolic proof for the semi-discrete entropy analysis.}, label={lst:symbolic-proof-volume}]
import argparse
from functools import reduce
import sympy as sp

def gll_sbp_operator(degree):
    # Construct the exact diagonal-norm LGL SBP operator 
    # of degree p
    if degree < 1:
        raise ValueError("The polynomial degree must be at least one.")

    x = sp.Symbol("x", real=True)
    Pp = sp.legendre(degree, x)

    # LGL nodes: endpoints and roots of P_p'(x).
    interior_nodes = sp.Poly(sp.diff(Pp, x), x).all_roots()
    nodes = [sp.Integer(-1), *interior_nodes, sp.Integer(1)]
    nodes = sorted(nodes, key=lambda z: float(sp.N(z, 30)))

    # LGL quadrature weights.
    weights = [ sp.simplify( sp.Rational(2, degree * (degree + 1)) / sp.legendre(degree, xi) ** 2 ) for xi in nodes ]

    n = degree + 1
    D1 = sp.zeros(n, n)

    # LGL collocation differentiation matrix.
    for i in range(n):
        for j in range(n):
            if i != j:
                D1[i, j] = sp.simplify( sp.legendre(degree, nodes[i]) / ( sp.legendre(degree, nodes[j]) * (nodes[i] - nodes[j])))

    D1[0, 0] = -sp.Rational(degree * (degree + 1), 4)
    D1[-1, -1] = sp.Rational(degree * (degree + 1), 4)

    P1 = sp.diag(*weights)
    Q1 = sp.simplify(P1 * D1)
    B1 = sp.diag(-1, *([0] * (n - 2)), 1)

    return nodes, P1, D1, Q1, B1

def kron(*args):
    return reduce(lambda a, b: sp.kronecker_product(a, b), args)

def vec(prefix, n):
    return sp.Matrix(sp.symbols(f"{prefix}0:{n}", real=True))

def scalar(x):
    if isinstance(x, sp.MatrixBase):
        if x.shape != (1, 1):
            raise ValueError(f"Expected a 1x1 matrix, got shape {x.shape}.")
        return x[0, 0]
    return x

def is_zero(expr):
    expanded = sp.expand(expr)
    return expanded == 0 or sp.simplify(expanded) == 0

def flux_reduce(expr):
    if isinstance(expr, sp.MatrixBase):
        return expr.applyfunc(lambda e: sp.factor(sp.together(e)))
    return sp.factor(sp.together(expr))

def pair_average(v, i, j):
    return sp.Rational(1, 2) * (v[i] + v[j])

def pair_metric_velocity(M, V, l, i, j):
    # K_l(i,j) = sum_m {{M_lm}}_ij {{V_m}}_ij
    return sum( pair_average(M[l][m], i, j) * pair_average(V[m], i, j) for m in range(3) )

def pairwise_gcl_divergence(D, M, V, one):
    # Return sum_l 2 (D_l o K_l) 1 without forming 
    # dense K_l matrices
    N = one.rows
    result = sp.zeros(N, 1)

    for l in range(3):
        for i in range(N):
            acc = sp.Integer(0)

            for j in range(N):
                dij = D[l][i, j]

                if dij != 0:
                    acc += ( 2 * dij * pair_metric_velocity(M, V, l, i, j) )

            result[i] += acc

    return result

# 3D tensor-product diagonal-norm SBP operator

parser = argparse.ArgumentParser()
parser.add_argument( "-p", "--degree", type=int, default=1, help="Polynomial degree of the 1-D LGL SBP operator." )
args = parser.parse_args()

nodes_1d, P1, D1, Q1, B1 = gll_sbp_operator(args.degree)
I1 = sp.eye(args.degree + 1)

# The selected polynomial degree p gives p + 1 LGL nodes 
# in each coordinate direction.

P = kron(P1, P1, P1)

D = [kron(D1, I1, I1),
     kron(I1, D1, I1),
     kron(I1, I1, D1)]

Q = [P * Dl for Dl in D]

# Construct the physical boundary matrices independently
#  of Q so the SBP identity below is a genuine check 
# rather than B := Q + Q^T.
B = [kron(B1, P1, P1),
     kron(P1, B1, P1),
     kron(P1, P1, B1)]

N = P.rows
one = sp.ones(N, 1)

# Verify the one-dimensional SBP identity Q_1 + Q_1^T = B_1.
assert all(is_zero(e) for e in Q1 + Q1.T - B1)

# Cache nonzero Q entries. 
# The tensor-product Q_l are sparse, and the nonlinear
# shuffle sums only need these pairs.
q_pairs = []

for l in range(3):
    q_pairs.append([(i, j, Q[l][i, j])
                    for i in range(N)
                    for j in range(N)
                    if Q[l][i, j] != 0])

for l in range(3):
    # Verify the tensor-product SBP identity in direction l.
    assert all(is_zero(e) for e in Q[l] + Q[l].T - B[l])

    # Verify exact differentiation of constants.
    assert all(is_zero(e) for e in D[l] * one)

    # Verify the discrete telescoping relation 1^T Q_l = 1^T B_l.
    assert all(is_zero(e) for e in one.T * Q[l] - one.T * B[l])

# Generic nonlinear SBP telescoping identity for 
# a symmetric two-point flux.
w = vec("w", N)

# Only create flux symbols actually used by Q, 
# plus the diagonal symbols needed by the boundary term.
flux_symbols = {}

def symmetric_flux_symbol(i, j):
    key = (min(i, j), max(i, j))

    if key not in flux_symbols:
        flux_symbols[key] = sp.Symbol( f"F_{key[0]}_{key[1]}", real=True)

    return flux_symbols[key]

for l in range(3):
    direct = 2 * sum( qij * w[i] * symmetric_flux_symbol(i, j) for i, j, qij in q_pairs[l] )
    boundary = sum( B[l][i, i] * w[i] * symmetric_flux_symbol(i, i) for i in range(N) if B[l][i, i] != 0 )
    shuffle = sum( qij * (w[i] - w[j]) * symmetric_flux_symbol(i, j) for i, j, qij in q_pairs[l] )

    # Verify the nonlinear SBP telescoping identity 
    # for a symmetric two-point flux.
    assert is_zero(direct - boundary - shuffle)

# Explicit ALE entropy-conservative two-point flux 
# and volume identity. We first check pointwise physical
# and two-point flux identities. Primitive variables are
# V = [rho, u_1, u_2, u_3, T]. The logarithmic mean is
# written as (b-a)/(log(b)-log(a)), which is equivalent
# to (b-a)/log(b/a) for positive states but is easier 
# for exact symbolic reduction.

R = sp.Symbol("R", real=True, positive=True)
gamma = sp.Symbol("gamma", real=True, positive=True)
rho_inf = sp.Symbol("rho_inf", real=True, positive=True)
T_inf = sp.Symbol("T_inf", real=True, positive=True)

cv = R / (gamma - 1)
cp = R * gamma / (gamma - 1)

def LogAverage(a, b):
    if a == b:
        return a
    return (b - a) / (sp.log(b) - sp.log(a))

def pressure(V):
    return V[0] * R * V[4]

def specific_entropy(V):
    rho = V[0]
    T = V[4]
    return cv * (sp.log(T) - sp.log(T_inf)) - R * (sp.log(rho) - sp.log(rho_inf))

def kinetic_energy(u):
    return scalar(u.T * u) / 2

def ConservativeVars(V):
    rho = V[0]
    u = sp.Matrix(V[1:4])
    T = V[4]
    E = cv * T + kinetic_energy(u)
    return sp.Matrix([rho, rho * u[0], rho * u[1], rho * u[2], rho * E])

def EntropyVars(V):
    rho = V[0]
    u = sp.Matrix(V[1:4])
    T = V[4]
    h = cp * T
    s = specific_entropy(V)
    k = kinetic_energy(u)
    return sp.Matrix([h / T - k / T - s, u[0] / T, u[1] / T, u[2] / T, -1 / T])

def MathematicalEntropy(V):
    return -V[0] * specific_entropy(V)

def EntropyPotential(V):
    return R * V[0]

def EntropyFlux(m, V):
    return MathematicalEntropy(V) * V[m + 1]

def EntropyFluxPotential(m, V):
    return R * V[0] * V[m + 1]

def InviscidFlux(m, V):
    rho = V[0]
    u = sp.Matrix(V[1:4])
    T = V[4]
    p = pressure(V)
    H = cp * T + kinetic_energy(u)
    em = sp.eye(3)[:, m]
    momentum = rho * u[m] * u + p * em
    return sp.Matrix([rho * u[m], momentum[0], momentum[1], momentum[2], rho * u[m] * H])

def EntropyConservativeState(Vl, Vr):
    rhol = Vl[0]
    rhor = Vr[0]
    ul = sp.Matrix(Vl[1:4])
    ur = sp.Matrix(Vr[1:4])
    betal = 1 / Vl[4]
    betar = 1 / Vr[4]

    rho_log = LogAverage(rhol, rhor)
    beta_log = LogAverage(betal, betar)
    u_avg = (ul + ur) / 2

    energy = rho_log * ( R / ((gamma - 1) * beta_log) + sp.Rational(1, 2) * scalar(ul.T * ur) )

    return sp.Matrix([rho_log, rho_log * u_avg[0], rho_log * u_avg[1], rho_log * u_avg[2], energy])

def EntropyConservativeFlux(m, Vl, Vr):
    rhol = Vl[0]
    rhor = Vr[0]
    ul = sp.Matrix(Vl[1:4])
    ur = sp.Matrix(Vr[1:4])
    betal = 1 / Vl[4]
    betar = 1 / Vr[4]

    rho_log = LogAverage(rhol, rhor)
    beta_log = LogAverage(betal, betar)
    p_avg = (rhol + rhor) / (betal + betar)
    u_avg = (ul + ur) / 2
    K_avg = (kinetic_energy(ul) + kinetic_energy(ur)) / 2

    mdot = rho_log * u_avg[m]
    em = sp.eye(3)[:, m]
    momentum = mdot * u_avg + R * p_avg * em
    energy = mdot * ( R / ((gamma - 1) * beta_log) - K_avg ) + scalar(momentum.T * u_avg)

    return sp.Matrix([mdot, momentum[0], momentum[1], momentum[2], energy])

def InviscidFluxALE(n, V, GridV):
    return sum(( n[m] * InviscidFlux(m, V) for m in range(3) ), sp.zeros(5, 1)) - scalar(n.T * GridV) * ConservativeVars(V)

def EntropyConservativeFluxALE(n, Vl, Vr, GridV):
    return sum(( n[m] * EntropyConservativeFlux(m, Vl, Vr) for m in range(3) ), sp.zeros(5, 1)) - scalar(n.T * GridV) * EntropyConservativeState(Vl, Vr)

def EntropyPotentialALE(n, V, GridV):
    return sum( n[m] * ( EntropyFluxPotential(m, V) - GridV[m] * EntropyPotential(V) ) for m in range(3) )

rhoL, rhoR = sp.symbols("rhoL rhoR", real=True, positive=True)
TL, TR = sp.symbols("TL TR", real=True, positive=True)
uL = sp.symbols("uL1:4", real=True)
uR = sp.symbols("uR1:4", real=True)

VL = sp.Matrix([rhoL, uL[0], uL[1], uL[2], TL])
VR = sp.Matrix([rhoR, uR[0], uR[1], uR[2], TR])

WL = EntropyVars(VL)
WR = EntropyVars(VR)
dW = WR - WL

# Verify the physical entropy potential phi = w^T q - S.
assert is_zero(flux_reduce( scalar(WL.T * ConservativeVars(VL)) - MathematicalEntropy(VL) - EntropyPotential(VL) ))

# Verify the physical entropy-flux potentials 
# psi_m = w^T f_m - F_m^S.
for m in range(3):
    assert is_zero(flux_reduce( scalar(WL.T * InviscidFlux(m, VL)) - EntropyFlux(m, VL) - EntropyFluxPotential(m, VL) ))

# Verify consistency of the temporal two-point state U^#.
assert all(is_zero(e) for e in flux_reduce( EntropyConservativeState(VL, VL) - ConservativeVars(VL) ))

# Verify symmetry of the temporal two-point state U^#.
assert all(is_zero(e) for e in flux_reduce( EntropyConservativeState(VL, VR) - EntropyConservativeState(VR, VL) ))

# Verify the temporal Tadmor shuffle condition [w]^T U^# = [phi].
assert is_zero(flux_reduce( scalar(dW.T * EntropyConservativeState(VL, VR)) - (EntropyPotential(VR) - EntropyPotential(VL)) ))

# Verify consistency, symmetry, and the Tadmor shuffle
# condition for each physical EC flux.
for m in range(3):
    assert all(is_zero(e) for e in flux_reduce( EntropyConservativeFlux(m, VL, VL) - InviscidFlux(m, VL) ))
    assert all(is_zero(e) for e in flux_reduce( EntropyConservativeFlux(m, VL, VR) - EntropyConservativeFlux(m, VR, VL) ))
    assert is_zero(flux_reduce( scalar(dW.T * EntropyConservativeFlux(m, VL, VR)) - (EntropyFluxPotential(m, VR) - EntropyFluxPotential(m, VL)) ))

# An arbitrary normal/scaled metric vector. 
# It need not be normalized, so the same formula applies 
# to physical normal fluxes and contravariant volume fluxes.
n = sp.Matrix(sp.symbols("n1:4", real=True))
GridV = sp.Matrix(sp.symbols("vg1:4", real=True))

# Verify consistency of the normal/scaled ALE two-point flux.
assert all(is_zero(e) for e in flux_reduce( EntropyConservativeFluxALE(n, VL, VL, GridV) - InviscidFluxALE(n, VL, GridV) ))

# Verify symmetry of the normal/scaled ALE two-point flux.
assert all(is_zero(e) for e in flux_reduce( EntropyConservativeFluxALE(n, VL, VR, GridV) - EntropyConservativeFluxALE(n, VR, VL, GridV) ))

# Verify the ALE Tadmor shuffle condition.
assert is_zero(flux_reduce( scalar(dW.T * EntropyConservativeFluxALE(n, VL, VR, GridV)) - (EntropyPotentialALE(n, VR, GridV) - EntropyPotentialALE(n, VL, GridV)) ))

# We now verify the element ALE volume identity with the GCL 
# residuals left explicit. The pointwise flux identities have now 
# been verified explicitly. To keep the tensor-product proof compact,
# the element calculation uses only their scalar consequences 
# instead of rebuilding the logarithmic-mean flux at every SBP pair.
#
# S          : mathematical entropy at the nodes
# phi        : entropy potential, phi = w^T q - S
# FS[m]      : Cartesian entropy flux in x_m
# psi[m]     : Cartesian entropy-flux potential, psi_m = w^T F_m - FS_m
# M[l][m]    : J dxi_l/dx_m
# V[m]       : Cartesian mesh velocity

S = vec("S", N)
phi = vec("phi", N)
FS = [vec(f"FS{m + 1}_", N) for m in range(3)]
psi = [vec(f"psi{m + 1}_", N) for m in range(3)]
M = [[vec(f"M{l + 1}{m + 1}_", N) for m in range(3)] for l in range(3)]
V = [vec(f"V{m + 1}_", N) for m in range(3)]

# d/dt (1^T P J S)
Jdot = vec("Jdot_", N)
dE = sp.Symbol("dE_dt", real=True)

# The explicit pointwise verification above establishes:
#
#   (w_i-w_j)^T F_m^ec = psi_m_i-psi_m_j,
#   (w_i-w_j)^T U^#    = phi_i-phi_j.
#
# The following is the resulting entropy contraction of the ALE volume 
# flux.
volume_ALE = sp.Integer(0)
boundary_entropy_ALE = sp.Integer(0)

for l in range(3):
    diagonal_contraction = []
    diagonal_entropy_flux = []

    for i in range(N):
        wiFi = sp.Integer(0)
        entropy_flux_i = sp.Integer(0)

        for m in range(3):
            wiFi += M[l][m][i] * ( FS[m][i] + psi[m][i] - V[m][i] * (S[i] + phi[i]) )
            entropy_flux_i += M[l][m][i] * ( FS[m][i] - V[m][i] * S[i] )

        diagonal_contraction.append(wiFi)
        diagonal_entropy_flux.append(entropy_flux_i)

    boundary_part = sum( B[l][i, i] * diagonal_contraction[i] for i in range(N) if B[l][i, i] != 0 )
    boundary_entropy_ALE += sum( B[l][i, i] * diagonal_entropy_flux[i] for i in range(N) if B[l][i, i] != 0 )

    jump_part = sp.Integer(0)

    for i, j, qij in q_pairs[l]:
        spatial_shuffle = sum( pair_average(M[l][m], i, j) * (psi[m][i] - psi[m][j]) for m in range(3) )
        temporal_shuffle = ( pair_metric_velocity(M, V, l, i, j) * (phi[i] - phi[j]) )
        jump_part += qij * ( spatial_shuffle - temporal_shuffle )

    volume_ALE += boundary_part + jump_part

# The chain rule for the time derivative contributes
#
#   dE/dt + phi^T P Jdot.
#
time_contraction = ( dE + scalar(phi.T * P * Jdot) )

spatial_gcl_residual = []

for m in range(3):
    spatial_gcl_residual.append( sum(( D[l] * M[l][m] for l in range(3) ), sp.zeros(N, 1)))

temporal_gcl_residual = ( Jdot - pairwise_gcl_divergence( D, M, V, one ))

expected_ALE = ( dE + boundary_entropy_ALE + scalar( phi.T * P * temporal_gcl_residual ) + sum( scalar( psi[m].T * P * spatial_gcl_residual[m] ) for m in range(3)))

# Verify the ALE element entropy identity with the spatial and temporal 
# GCL residuals left explicit.
assert is_zero(time_contraction + volume_ALE - expected_ALE)

# If the spatial and temporal discrete GCL residuals vanish, the ALE volume
# contraction reduces to dE/dt plus the ALE surface entropy flux.

# ALE interface: common metric/grid velocity and entropy stability
nvar = 5

wL = vec("wL_", nvar)
wR = vec("wR_", nvar)
dw = ( wR - wL )

phiL, phiR = sp.symbols( "phiL phiR", real=True )
psiL = sp.symbols( "psiL1:4", real=True )
psiR = sp.symbols( "psiR1:4", real=True )
Aface = sp.symbols( "Af1:4", real=True )
Vface = sp.symbols( "Vf1:4", real=True )

PsiL = sum( Aface[m] * ( psiL[m] - Vface[m] * phiL ) for m in range(3) )
PsiR_left_orientation = sum( Aface[m] * ( psiR[m] - Vface[m] * phiR ) for m in range(3) )
PsiR_outward = sum( (-Aface[m]) * ( psiR[m] - Vface[m] * phiR ) for m in range(3) )

# Verify cancellation from opposite outward metrics with a common face 
# grid velocity.
assert is_zero(PsiR_outward + PsiR_left_orientation)

# The first proof has verified the entropy-conservative ALE shuffle 
# condition for the explicit physical two-point flux. 
# Add positive-semidefinite dissipation:
#
#     f_es = f_ec - 1/2 H^T H (w_R-w_L),
#
# and verify that the resulting interface entropy production is 
# non-positive.

h_symbols = sp.symbols( "h0:25", real=True )
H = sp.Matrix( 5, 5, h_symbols )
Diss = ( H.T * H )

jump_Psi = ( PsiR_left_orientation - PsiL )
contracted_f_es = ( jump_Psi - sp.Rational(1, 2) * scalar( dw.T * Diss * dw ) )
interface_ALE = ( contracted_f_es - jump_Psi )
interface_ALE_squares = ( -sp.Rational(1, 2) * scalar( (H * dw).T * (H * dw) ) )

# Verify that the entropy-stable ALE interface contribution is a non-positive
# quadratic form, -1/2 ||H (w_R-w_L)||^2.
assert is_zero(interface_ALE - interface_ALE_squares)

# The spatial and temporal GCLs remain explicit assumptions/residuals 
# rather than being proved by a simplified mapping construction. 
# The remaining physical-boundary input can be verified by the separate 
# moving-wall ghost-state script.
\end{lstlisting}

\end{document}